\newtheorem{theorem}{Theorem}[section]
\newtheorem{lemma}[theorem]{Lemma}
\newtheorem{corollary}[theorem]{Corollary}
\newtheorem{proposition}[theorem]{Proposition}
\newtheorem{fact}[theorem]{Fact}
\theoremstyle{definition}
\newtheorem{definition}[theorem]{Definition}
\newtheorem{remark}[theorem]{Remark}
\newtheorem{question}[theorem]{Question}
\def\acl{\operatorname{acl}}
\def\dcl{\operatorname{dcl}}
\def\tp{\operatorname{tp}}
\def\Aut{\operatorname{Aut}}
\def\M{\mathbb M}
\def\cL{\mathcal L}
\def\Diag{\operatorname{Diag}}
\newcommand{\seq}{\subseteq}
\newcommand{\thrn}{\text{\thorn}}
\newcommand{\br}{\bm{r}}
\newcommand{\bs}{\bm{s}}
\newcommand{\bt}{\bm{t}}
\newcommand{\eq}{\mathrm{eq}}
\def\Ind{\setbox0=\hbox{$x$}\kern\wd0\hbox to 0pt{\hss$\mid$\hss}
\lower.9\ht0\hbox to 0pt{\hss$\smile$\hss}\kern\wd0}
\def\Notind{\setbox0=\hbox{$x$}\kern\wd0\hbox to 0pt{\mathchardef
\nn=12854\hss$\nn$\kern1.4\wd0\hss}\hbox to
0pt{\hss$\mid$\hss}\lower.9\ht0 \hbox to 0pt{\hss$\smile$\hss}\kern\wd0}
\def\ind{\mathop{\mathpalette\Ind{}}}
\def\nind{\mathop{\mathpalette\Notind{}}}
\newcommand{\gind}{\ind}
\newcommand{\ngind}{\nind}
\newcommand{\iind}{\ter{I}}
\newcommand{\niind}{\nter{I}}
\newcommand{\ter}[1]{\ind^{\!\!{#1}}}
\newcommand{\nter}[1]{\nind^{\!\!{#1}}}
\newcommand{\trt}[1]{\ind^{\!\!\textnormal{#1}}}
\newcommand{\ntrt}[1]{\nind^{\!\!\textnormal{#1}}}
   \def\MR#1{}
\title{Independence in generic incidence structures}
\author{Gabriel Conant\\
\small{University of Notre Dame}
 \and 
 Alex Kruckman\\
 \small{Indiana University Bloomington}}
\date{April 2, 2018}
\begin{document}

\maketitle
\abstract{We study the theory $T_{m,n}$ of existentially closed incidence structures omitting the complete incidence structure $K_{m,n}$, which can also be viewed as existentially closed $K_{m,n}$-free bipartite graphs. In the case $m = n = 2$, this is the theory of existentially closed projective planes. We give an $\forall\exists$-axiomatization of $T_{m,n}$, show that $T_{m,n}$ does not have a countable saturated model when $m,n\geq 2$, and show that the existence of a prime model for $T_{2,2}$ is equivalent to a longstanding open question about finite projective planes. Finally, we analyze model theoretic notions of complexity for $T_{m,n}$. We show that $T_{m,n}$ is NSOP$_1$, but not simple when $m,n\geq 2$, and we show that $T_{m,n}$ has weak elimination of imaginaries but not full elimination of imaginaries. These results rely on combinatorial characterizations of various notions of independence, including algebraic independence, Kim independence, and forking independence.}

\section{Introduction}

Many examples in model theory can be obtained using the following recipe: Start with a universally axiomatized base theory $T$ with class of models $\mathcal{K}$, and consider the subclass $\mathcal{K}^*$ of existentially closed models. If $\mathcal{K}^*$ is an elementary class, axiomatized by a theory $T^*$, then $T^*$ is the model companion of $T$, which we often call the generic theory of $\mathcal{K}$. The large saturated models of $T^*$ can be viewed as universal domains for $\mathcal{K}$. Being model complete, $T^*$ eliminates quantifiers at least down to existential formulas, and the completions of $T^*$ tend to be model theoretically tamer than arbitrary completions of $T$. 

We apply this recipe to incidence structures. An incidence structure is a set $P$ of points and a set $L$ of lines, together with a binary relation $I$, called incidence, between the points and the lines. In this paper, we focus on the theory $T^p_{m,n}$ of incidence structures which omit the complete incidence structure $K_{m,n}$ (consisting of $m$ points incident to $n$ lines). Equivalently, these are incidence structures in which every $m$ points simultaneously lie on at most $n-1$ lines and every $n$ lines intersect in at most $m-1$ points. We show that this theory has a model companion $T_{m,n}$ and study its model theoretic properties. In Section~\ref{sec:props}, we give a $\forall\exists$-axiomatization of $T_{m,n}$ (Theorem~\ref{thm:modelcompanion}), characterize algebraic closure (Proposition~\ref{prop:Iacl}), and prove an ``almost quantifier elimination'' result (Proposition~\ref{prop:partialqe}). In Section~\ref{sec:countable}, we consider countable saturated and prime models of $T_{m,n}$. In Section~\ref{sec:NSOP1}, we position $T_{m,n}$ in the classification theory hierarchy,  characterize various notions of independence, and prove weak elimination of imaginaries. This work fits into the following contexts. 

First, in the special case when $m = n = 2$, the models of $T_{m,n}$ are existentially closed combinatorial projective planes, i.e., existentially closed incidence structures in which every pair of points determine a unique line and every pair of lines intersect in a unique point. The class of existentially closed projective planes was previously examined by Kegel~\cite{Kegel}, who proposed a further study of the model theory of this class, but it appears that this program was not pursued further until now. Later, Baldwin~\cite{baldwin} and Tent and Zilber~\cite{TZ} used the Hrushovski method to produce examples of projective planes with surprising properties. These planes are generic for certain restricted classes of projective planes, defined using predimension functions.

The theory $T_{m,n}$ is also interesting from the perspective of the model theory of graphs. The class of graphs has a model companion, the unique countable model of which is known as the ``random graph". This structure has a bipartite counterpart, the ``random bipartite graph". When it is viewed as a structure in a language with unary predicates for the bipartition, the theory of the random bipartite graph is the model companion of the theory of arbitrary incidence structures. Both the random graph and the random bipartite graph have well-understood theories: they are $\aleph_0$-categorical, simple, unstable, and have quantifier elimination and trivial algebraic closure. 

In the same way, $T_{m,n}$ can be viewed as the theory of existentially closed bipartite graphs omitting the complete bipartite graph $K_{m,n}$. Thus the theories $T_{m,n}$ are bipartite counterparts to the theories $T_n$ of generic $K_n$-free graphs (existentially closed graphs omitting a complete subgraph of size $n$). The theories $T_n$, introduced by Henson~\cite{Henson}, are important examples of countably categorical theories exhibiting the properties TP$_2$, SOP$_3$, and NSOP$_4$ (see~\cite{Sh500}, and also~\cite{conant} for discussion of the model theoretic properties of Henson graphs). We show that for $m,n\geq 2$, $T_{m,n}$ also has TP$_2$, but it is NSOP$_1$, hence tamer, in a sense, than the Henson graphs. 

In another contrast to the Henson graphs, $T_{m,n}$ is not countably categorical when $m,n\geq 2$. In fact, we show that in this case, $T_{m,n}$ has continuum-many types over the empty set, and hence has no countable saturated model (Theorem \ref{thm:small}). The question of whether $T_{m,n}$ has a prime model seems to be a hard combinatorial problem; in the case $m=n=2$, we show that it is equivalent to a longstanding open problem in the theory of projective planes (Theorem \ref{thm:primemodel}).

Finally, from the point of view of classification theory, the generic theory recipe has been a fruitful source of examples of simple theories (see~\cite{ChaPil}, for example). 
Recently, many examples of generic theories which are not simple have been shown to be NSOP$_1$ (see \cite{ArtemNick}, \cite{Kr}, \cite{KrRa}). The theories $T_{m,n}$ are further examples of this phenomenon, and they provide good combinatorial examples of properly NSOP$_1$ theories. One noteworthy example, parallel to our work, is that of \emph{Steiner triple systems}: incidence structures in which every line contains exactly three points and any two points are contained in a unique line. Barbina and Casanovas \cite{BaCa} have recently shown that in the language of quasi-groups, the class of finite Steiner triple systems is a Fra\"{i}ss\'{e} class, whose Fra\"{i}ss\'{e} limit is properly NSOP$_1$.

The main tool for proving that generic theories are NSOP$_1$ is the independence relation criterion developed by Chernikov and Ramsey~\cite{ArtemNick} and refined by Kaplan and Ramsey~\cite{KRKim}: If there is a notion of independence between subsets of the monster model of a complete theory, which satisfies certain axioms, then that theory must be NSOP$_1$, and the independence relation must be Kim independence. Applying this criterion to $T_{m,n}$, we obtain a characterization of Kim independence in terms of incidence-free disjoint amalgamation of algebraically closed sets (Definition \ref{def:KI}, Theorem \ref{thm:NSOP1}). In addition, we characterize other notions of independence in $T_{m,n}$, including forking independence, thorn independence, algebraic independence, and a stationary independence relation which is stronger than all of these. While Kim independence is clearly the most well-behaved independence relation in any NSOP$_1$ theory, in examples we often find multiple interesting notions of independence, and the relationships between these notions are likely to be of importance in the general theory. For example, our analysis of Kim independence in $T_{m,n}$ exposes a rather intricate relationship to algebraic independence (Lemma \ref{lem:SFCW}, Proposition \ref{prop:Kequiv}). This relationship is a key tool in the characterization of dividing in $T_{m,n}$ (Corollary~\ref{cor:div}), as well as in the proof of weak elimination of imaginaries (Theorem \ref{thm:WEI}).

\section{Axiomatization and almost quantifier elimination}\label{sec:props}

We consider incidence structures in the language $\cL=\{P,L,I\}$, where $P$ and $L$ are unary predicates for ``points" and ``lines", and $I$ is a binary relation for ``incidence" between points and lines. An \emph{incidence structure} is an $\cL$-structure such that the interpretations of $P$ and $L$ partition the domain, and incidences $I(x,y)$ only hold on pairs $(x,y)\in P\times L$. 

Given natural numbers $m$ and $n$, let $K_{m,n}$ be the the incidence structure consisting of $m$ points and $n$ lines, such that every point is incident to every line. We say that an incidence structure is \emph{$K_{m,n}$-free} if it does not contain a substructure isomorphic to $K_{m,n}$. In other words, an incidence structure is $K_{m,n}$-free if every $m$ points simultaneously lie on at most $n-1$ lines, and every $n$ lines intersect in at most $m-1$ points. While the use of the word ``line" is somewhat unnatural when $m,n\neq 2$, we nevertheless find this geometric language useful for its evocative power.

An equivalent viewpoint is that incidence structures are bipartite graphs, where $P$ and $L$ are the two pieces of the bipartition and $I$ is the edge relation between them. In this interpretation, $K_{m,n}$ is simply the complete bipartite graph with $m+n$ vertices partitioned into two pieces of sizes $m$ and $n$. However, it is worth emphasizing that we distinguish the two pieces of the partition, calling one part ``points" and one part ``lines". Thus,  if $m\neq n$, a ``$K_{m,n}$-free bipartite graph" is not necessarily $K_{n,m}$-free. This convention will be convenient for several combinatorial constructions in this paper, as well as for model theoretic reasons later on (e.g.\ a single point and a single line have different types over $\emptyset$). 

For the rest of this section, we fix $m,n\geq 1$. 

\begin{definition}
Let $T^p_{m,n}$ denote the universal $\cL$-theory of $K_{m,n}$-free incidence structures. Let $T^c_{m,n}$ denote the theory of $K_{m,n}$-free incidence structures such that every $m$ points simultaneously lie on exactly $n-1$ lines, and every $n$ lines intersect in exactly $m-1$ points.  The letters $p$ and $c$ stand for ``partial'' and ``complete''.
 \end{definition}

\begin{remark}
$T^c_{2,2}$ can be classically recognized as the theory of \emph{combinatorial projective planes} in which every two points are incident to a unique line, and every two lines are incident to a unique point. One caveat is that the definition of a projective plane usually involves a non-degeneracy axiom: there are four distinct points no three of which lie on a common line. We will soon consider the model companion of $T^c_{2,2}$, which will automatically include this axiom. 
\end{remark}

\begin{proposition}\label{prop:free-closure}
Every model of $T^p_{m,n}$ embeds in a model of $T^c_{m,n}$.
\end{proposition}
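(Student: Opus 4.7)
The plan is an iterative ``free closure'' construction along $\omega$. Set $M_0 = M$, and given $M_i$, construct $M_{i+1}$ as follows: for each $m$-element subset $S$ of points in $M_i$ that is incident to only $k < n-1$ lines of $M_i$, adjoin $n-1-k$ fresh lines, each incident to exactly the points of $S$ (and to no other old or new elements); dually, for each $n$-element subset $T$ of lines in $M_i$ meeting in only $k < m-1$ points of $M_i$, adjoin $m-1-k$ fresh points, each incident to exactly the lines of $T$. Let $M_\omega = \bigcup_{i<\omega} M_i$; the aim is to show that $M_\omega$ is a $K_{m,n}$-free incidence structure in which every $m$ points are on exactly $n-1$ lines and every $n$ lines meet in exactly $m-1$ points.

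The crucial step is to verify that each $M_{i+1}$ remains $K_{m,n}$-free. Assuming inductively that $M_i$ is, suppose toward contradiction that points $p_1,\dots,p_m$ and lines $\ell_1,\dots,\ell_n$ form a $K_{m,n}$ in $M_{i+1}$. By construction a new line is incident only to its designated $m$-tuple of old points, a new point only to its designated $n$-tuple of old lines; in particular, a new point and a new line of $M_{i+1}$ are never incident. So the $K_{m,n}$ cannot mix new points with new lines. If some $p_i$ is new while all $\ell_j$ are old, then the $n$-tuple designating any such new $p_i$ must equal $\{\ell_1,\dots,\ell_n\}$, so the total number of points of $M_{i+1}$ incident to $\{\ell_1,\dots,\ell_n\}$ is exactly $m-1$, contradicting the existence of $m$ such points. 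The case in which some $\ell_j$ is new is symmetric, and the case of all old elements contradicts the inductive hypothesis.

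Finally, I check that $M_\omega \models T^c_{m,n}$. Let $S$ be an $m$-element set of points in $M_\omega$, and fix $i$ with $S \subseteq M_i$; by construction, $S$ is incident to exactly $n-1$ lines in $M_{i+1}$. No later stage adds another line incident to all of $S$: a new line in $M_{j+1}$ with $j \geq i+1$ is prescribed to some $m$-element set $S' \subseteq M_j$ with fewer than $n-1$ lines incident to it in $M_j$, so $S \subseteq S'$ forces $S' = S$, but $S$ already has $n-1$ incident lines by stage $i+1$, so no new line is added for $S$ at stage $j+1$. The case of $n$-element sets of lines is symmetric, and the embedding $M \hookrightarrow M_\omega$ is literally $M_0 \subseteq M_\omega$. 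The main obstacle is the $K_{m,n}$-freeness step, and the construction succeeds precisely because new elements are ``transversal,'' being incident only to their own designated tuple of old elements.
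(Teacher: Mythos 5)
Your proof is correct and follows essentially the same route as the paper's: the iterated free completion (Hall's construction), adjoining new ``transversal'' elements incident only to their designated deficient tuple, checking $K_{m,n}$-freeness at each stage, and taking the union along $\omega$. The only cosmetic difference is that you saturate each deficient set in a single step (adding $n-1-k$ fresh lines or $m-1-k$ fresh points at once), whereas the paper adds one new element per deficient set per stage, which changes the details of the freeness check (your counting argument versus the paper's ``two new lines through the same $m$ points'' contradiction) but not the substance.
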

\begin{proof}
Let $M\models T^p_{m,n}$. For all $k\in \omega$, we construct a model $F_k(M)\models T^p_{m,n}$, such that $M\seq F_k(M)\seq F_{k+1}(M)$. Let $F_0(M)=M$.

Fix $k\in\omega$ and suppose $F_k(M)\models T^p_{m,n}$ has been constructed. Let $\Sigma^P_k(M)$ be the collection of all $m$-element sets of points in $F_k(M)$ which simultaneously lie on at most $n-2$ lines in $F_k(M)$, and let $\Sigma^L_k(M)$ be the collection of all $n$-element sets of lines in $F_k(M)$ which intersect in at most $m-2$ points in $F_k(M)$. Let $F_{k+1}(M)$ be obtained by adding to $F_k(M)$:
\begin{enumerate}[$(i)$]
\itemsep0em 
\parskip0em
\item pairwise distinct lines $y_\sigma$, for each $\sigma\in \Sigma^P_k(M)$, with new incidences $I(x,y_\sigma)$ for all $x\in\sigma$, and
\item pairwise distinct points $x_\sigma$, for each $\sigma\in\Sigma^L_k(M)$, with new incidences $I(x_\sigma,y)$ for all $y\in \sigma$.
\end{enumerate}
We claim $F_{k+1}(M)\models T^p_{m,n}$. To see this, suppose, for a contradiction, that  $F_{k+1}(M)$ contains a copy of $K_{m,n}$. Since $F_k(M)$ is $K_{m,n}$-free, this copy of $K_{m,n}$ must contain a line $y_\sigma$ for some $\sigma\in\Sigma^P_k(M)$ or a point $x_\sigma$ for some $\sigma\in \Sigma^L_k(M)$. Suppose $K_{m,n}$ contains such a $y_\sigma$ (the other case is symmetric). The points in $F_{k+1}(M)$ incident to $y_\sigma$ are precisely the $m$ points in $\sigma$, so these must be the $m$ points in the copy of $K_{m,n}$. By assumption, the points in $\sigma$ simultaneously lie on at most $n-2$ lines in $M$, and so there must be some $y_\tau$ in the copy of $K_{m,n}$ for some $\tau\neq\sigma$ in $\Sigma^P_k(M)$. But $y_\tau$ is only incident to the $m$ points in $\tau$, and so $\sigma\neq\tau$ implies that $y_\tau$ is not incident to some point in $\sigma$, which is a contradiction.

This finishes the construction of $F_k(M)$, for $k\in\omega$. Let $F(M)=\bigcup_{k\in\omega}F_k(M)$, and note that $M$ embeds in $F(M)$. By construction, $F(M)$ is $K_{m,n}$-free, and so $F(M)\models T^p_{m,n}$. We claim $F(M)\models T^c_{m,n}$. Indeed, suppose for a contradiction that there are distinct points $x_1,\ldots,x_m$ in $F(M)$ which simultaneously lie on at most $n-2$ lines in $F(M)$. By construction, there is some $k\in\omega$ such that $x_1,\ldots,x_m\in F_k(M)$, and every line containing $x_1,\ldots,x_m$ is in $F_k(M)$. Then, in the construction of $F_{k+1}(M)$, we added a new line containing $x_1,\ldots,x_m$, which is a contradiction. By a symmetric argument, every $n$ lines in $F(M)$ intersect in exactly $m-1$ points in $F(M)$. 
\end{proof}

The extension of a model of $T^p_{m,n}$ to a model of $T^c_{m,n}$ is not unique in general (see Lemma~\ref{lem:notfree} below). The particular construction used in the previous proof was introduced by Hall~\cite{Hall} (in the case of projective planes, $m=n=2$). This construction will also be useful later on, and so we give it a name.

\begin{definition}
Given $M\models T^p_{m,n}$, let $F(M)=\bigcup_{k\in\omega}F_k(M)$ as in the proof of Proposition \ref{prop:free-closure}. $F(M)$ is called the \emph{free completion} of $M$.
\end{definition}

\begin{corollary}\label{cor:free-closure}
If $M\models T^p_{m,n}$ then $F(M)\models T^c_{m,n}$.
\end{corollary}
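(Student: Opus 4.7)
The plan is to observe that this corollary is essentially a restatement of what was already verified in the proof of Proposition~\ref{prop:free-closure}. There, one constructs the specific tower $M = F_0(M) \subseteq F_1(M) \subseteq \cdots$ and shows that $F(M) = \bigcup_{k \in \omega} F_k(M)$ is a model of $T^c_{m,n}$. So nothing new really has to be proved; the role of the corollary is to record, for later reference, that the free completion construction yields a $T^c_{m,n}$-model, not merely that \emph{some} $T^c_{m,n}$-extension exists.

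Concretely, I would give a one-line proof pointing back to the proposition: the final paragraph of the proof of Proposition~\ref{prop:free-closure} verifies exactly that $F(M) \models T^c_{m,n}$. The two things checked there are (i) $F(M)$ is $K_{m,n}$-free, because each $F_k(M)$ is $K_{m,n}$-free and $K_{m,n}$-freeness is preserved under unions of chains (this is a universal property); and (ii) in $F(M)$, every $m$ distinct points simultaneously lie on exactly $n-1$ lines, and dually every $n$ distinct lines meet in exactly $m-1$ points. The ``at most'' direction of (ii) is just (i), while the ``at least'' direction uses that any witness $\{x_1,\ldots,x_m\} \subseteq F_k(M)$ lying on fewer than $n-1$ lines in $F(M)$ would already lie on fewer than $n-1$ lines in $F_k(M)$ (since every line through these points occurs at some finite stage and hence, by the argument in the proposition's proof, at stage $k$), so $\{x_1,\ldots,x_m\} \in \Sigma^P_k(M)$; then step $k+1$ of the construction would have added a new line through them, contradicting the assumption.

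There is no real obstacle here: the corollary is bookkeeping that isolates a named object, $F(M)$, from the existence statement of the proposition. The one subtlety worth spelling out, and the only place a reader might pause, is the observation that any line through $x_1,\ldots,x_m$ in $F(M)$ already belongs to $F_k(M)$. This is because lines newly added at stage $j > k$ are of the form $y_\sigma$ for $\sigma \in \Sigma^P_{j-1}(M)$, and by construction $y_\sigma$ is incident only to the $m$ points in $\sigma$; so if $y_\sigma$ is incident to all of $x_1,\ldots,x_m$, then $\sigma = \{x_1,\ldots,x_m\}$, but this set already lies in $F_k(M)$ and so was an element of $\Sigma^P_k(M)$, in which case a line through it was added at stage $k+1 \leq j$, contradicting the pairwise distinctness of the $y_\sigma$'s. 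The dual argument handles the line/point symmetric case.
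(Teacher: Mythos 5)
Your top-level plan coincides with the paper's: the corollary carries no separate argument precisely because the final paragraph of the proof of Proposition~\ref{prop:free-closure} already verifies that $F(M)\models T^c_{m,n}$, so a citation back to that proof suffices. Had you stopped after your first two sentences (or after the trivial chain-union observation for $K_{m,n}$-freeness), the proposal would be fine and essentially identical to the paper.

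The problem is the ``subtlety'' you single out in your last paragraph, whose justification is wrong. For an arbitrary stage $k$ with $x_1,\ldots,x_m\in F_k(M)$, it is false that every line of $F(M)$ through $x_1,\ldots,x_m$ already lies in $F_k(M)$: if these points lie on at most $n-2$ lines in $F_k(M)$, then $\sigma=\{x_1,\ldots,x_m\}\in\Sigma^P_k(M)$ and the construction adds a new line through them at stage $k+1$, and it continues to do so at each later stage as long as $\sigma$ still lies on at most $n-2$ lines (the same $m$-set can belong to $\Sigma^P_j(M)$ for many $j$). Your purported contradiction with ``the pairwise distinctness of the $y_\sigma$'s'' does not exist: that distinctness is only among the lines added within a single stage, not a guarantee that a given $m$-set receives at most one new line over the whole construction; and if $j=k+1$ the two lines you produce are literally the same line, which is itself a counterexample to the observation you are trying to prove. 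The correct way to run the reductio, as in the proposition's proof, is to use the hypothesis that $x_1,\ldots,x_m$ lie on at most $n-2$ lines in all of $F(M)$: these finitely many lines each appear at some finite stage, so one may choose $k$ so that $F_k(M)$ contains the points and every line of $F(M)$ through them; then $\sigma\in\Sigma^P_k(M)$, and the line added at stage $k+1$ is a line of $F(M)$ through the points not lying in $F_k(M)$, contradicting the choice of $k$. So the corollary is indeed immediate from the proposition, but the supporting argument you flagged as the key step would fail as written and should either be replaced by this finiteness argument or simply by the citation.
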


We now turn to the generic theory of $K_{m,n}$-free incidence structures. 

\begin{definition}\label{def:diagram}
Let $A$ be a model of $T^p_{m,n}$, with elements bijectively labeled by the variables $\overline{z}$. The \emph{diagram} $\Diag_A(\overline{z})$ is the set of all atomic and negated atomic formulas true in $A$. 
\end{definition}

Note that $\Diag_A(\overline{z})$ does not contain any positive instances of equality between distinct variables. If $A$ is finite, we identify $\Diag_A(\overline{z})$ with the formula $\bigwedge_{\varphi\in \Diag_A(\overline{z})} \varphi$.

\begin{definition}\label{def:safe}
Let $\Delta(\overline{x};\overline{y})$ be a set of atomic and negated atomic formulas in the variables $\overline{x}$ and $\overline{y}$. $\Delta$ is a \emph{safe diagram} if:
\begin{enumerate}
\itemsep0em 
\parskip0em
\item There is some finite $A\models T^p_{m,n}$ and a labeling of the elements of $A$ by $\overline{x}\overline{y}$ such that $\Delta(\overline{x};\overline{y}) = \Diag_A(\overline{x};\overline{y})$. 
\item For each $y_i$ in $\overline{y}$, with $P(y_i)\in\Delta$, there are at most $n-1$ variables $x_j$ in $\overline{x}$ such that $L(x_j),I(y_i,x_j)\in \Delta$. 
\item For each $y_i$ in $\overline{y}$, with $L(y_i)\in\Delta$, there are at most $m-1$ variables $x_j$ in $\overline{x}$ such that $P(x_j),I(x_j,y_i)\in \Delta$. 
\end{enumerate}
\end{definition}

Given a safe diagram $\Delta(\overline{x};\overline{y})$, let $\widehat{\Delta}(\overline{x})$ be the subset of $\Delta$ containing those atomic formulas which only mention variables in $\overline{x}$. Let $T_{m,n}$ be the theory 
\[
T^c_{m,n} \cup\left\{\forall \overline{x}\, \left(\widehat{\Delta}(\overline{x}) \rightarrow \exists\overline{y}\,\Delta(\overline{x};\overline{y})\right)\mid \Delta \text{ is a safe diagram}\right\}.
\]

\begin{theorem}\label{thm:modelcompanion}
$T_{m,n}$ is the model companion of $T^p_{m,n}$ and of $T^c_{m,n}$.
\end{theorem}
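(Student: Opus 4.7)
My plan is to verify the two defining properties of a model companion. Since $T_{m,n}$ contains $T^c_{m,n}$ and $T^c_{m,n}\vdash T^p_{m,n}$, every model of $T_{m,n}$ is automatically a model of both $T^p_{m,n}$ and $T^c_{m,n}$. So it remains to show (a) every model of $T^p_{m,n}$ embeds in a model of $T_{m,n}$, and (b) $T_{m,n}$ is model complete; note that (a) subsumes the analogous embedding statement for $T^c_{m,n}$.

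For (a), given $M_0\models T^p_{m,n}$, I build an $\omega$-chain $M_0\subseteq M_1\subseteq\ldots$ of $T^p_{m,n}$-models whose union satisfies $T_{m,n}$, alternating two operations. First, free completion (Proposition~\ref{prop:free-closure}) converts any $T^p_{m,n}$-model into a $T^c_{m,n}$-model. Second, given a safe diagram $\Delta(\overline{x};\overline{y})$ and a tuple $\overline{a}$ in some $M_i$ with $M_i\models\widehat{\Delta}(\overline{a})$, I adjoin new elements $\overline{b}$, disjoint from $M_i$, with the sorts and incidences dictated by $\Delta$ and no other incidences with $M_i$, producing an extension $M_i'$. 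The central technical claim is that $M_i'$ remains $K_{m,n}$-free. A putative $K_{m,n}$ in $M_i'$ cannot live entirely in $M_i$ nor entirely in $\overline{a}\overline{b}$ (by Definition~\ref{def:safe}(1)), so some element is in $\overline{b}$ and some in $M_i\setminus\overline{a}$. Suppose a point $p_1$ of the $K_{m,n}$ lies in $\overline{b}$; since the only incidences of $p_1$ in $M_i'$ are those from $\Delta$, every line incident to $p_1$ lies in $\overline{a}\cup\overline{b}$. If some line $\ell_j$ of the $K_{m,n}$ also lies in $\overline{b}$, the same argument puts every point incident to $\ell_j$ in $\overline{a}\cup\overline{b}$, placing the whole $K_{m,n}$ inside $\overline{a}\overline{b}$, contradiction. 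So all $n$ lines of the $K_{m,n}$ lie in $\overline{a}$, making $p_1$ incident to $n$ lines of $\overline{a}$ and violating Definition~\ref{def:safe}(2). The case where a line of the $K_{m,n}$ lies in $\overline{b}$ is symmetric via Definition~\ref{def:safe}(3). Standard $\omega$-stage bookkeeping realizes every safe-diagram instance. Since $T^c_{m,n}$ is equivalently $\forall\exists$-axiomatizable (each ``exactly $k$'' clause being the conjunction of a universal upper bound and an existential lower bound), inserting free completions cofinally makes the limit a $T^c_{m,n}$-model, and hence a $T_{m,n}$-model.

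For (b), I apply Robinson's test. Let $M\subseteq N$ be models of $T_{m,n}$, $\overline{a}\in M$, and $\overline{b}\in N$ satisfy some quantifier-free formula $\varphi(\overline{a},\overline{b})$. After absorbing $\overline{b}\cap M$ into the parameter tuple, I may assume $\overline{b}\cap M=\emptyset$. Set $\Delta(\overline{x};\overline{y}):=\Diag_{\overline{a}\overline{b}\text{ in }N}(\overline{x};\overline{y})$; once $\Delta$ is shown to be safe, $M\models\widehat{\Delta}(\overline{a})$ follows (since $\widehat{\Delta}$ is quantifier-free and $M\subseteq N$), and the corresponding safe-diagram axiom of $T_{m,n}$ produces $\overline{b}'\in M$ with $M\models\Delta(\overline{a},\overline{b}')$, hence $M\models\varphi(\overline{a},\overline{b}')$, as required. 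Condition (1) of Definition~\ref{def:safe} is immediate since $\overline{a}\overline{b}$ generates a finite $K_{m,n}$-free substructure of $N$. For condition (2), suppose toward a contradiction that some point $b_i\in\overline{b}$ is incident in $N$ to $n$ distinct lines $\ell_1,\ldots,\ell_n\in\overline{a}$. Since $N\models T^c_{m,n}$, the set of points of $N$ incident to all the $\ell_j$ has size exactly $m-1$; since $M\models T^c_{m,n}$, the corresponding subset of $M$ also has size $m-1$. The $M$-set is contained in the $N$-set and both have size $m-1$, so they coincide, forcing $b_i\in M$ and contradicting $\overline{b}\cap M=\emptyset$. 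Condition (3) is symmetric.

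The main obstacle is the safety verification in (b): the bounds imposed by Definition~\ref{def:safe} are strictly stronger than $K_{m,n}$-freeness, so the exactness clauses of $T^c_{m,n}$ are essential for Robinson's test to apply. Step (a), by contrast, is essentially a combinatorial check that safe-diagram witnesses can always be adjoined to $T^p_{m,n}$-models without producing a forbidden complete bipartite subgraph.
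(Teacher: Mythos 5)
Your proposal is correct, and its two combinatorial cores are exactly the paper's: the verification that adjoining witnesses for a safe diagram $\Delta(\overline{x};\overline{y})$ over a tuple realizing $\widehat{\Delta}$ preserves $K_{m,n}$-freeness, and the verification (using the exactness clauses of $T^c_{m,n}$ in $M$ together with $K_{m,n}$-freeness of $N$) that the diagram of $\overline{a}\overline{b}$ with $\overline{b}\cap M=\emptyset$ is safe. The packaging differs: the paper proves that the models of $T_{m,n}$ are precisely the existentially closed models of $T^p_{m,n}$ and then invokes the standard correspondence between model companions and elementary classes of e.c.\ models of a universal theory, whereas you check the definition directly, splitting into mutual embeddability (your explicit chain alternating free completions with witness adjunctions) and model completeness via Robinson's test. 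Your part (b) is essentially identical to the paper's ``converse'' direction, while your part (a) makes explicit the embedding-into-e.c.-models step that the paper absorbs into general theory, at the cost of the chain bookkeeping; the paper's route is correspondingly shorter, while yours is more self-contained and makes visible exactly where cofinal free completions are needed to recover the $\forall\exists$ axioms of $T^c_{m,n}$ in the limit. One point you should make explicit: for uncountable $M_0$ a single $\omega$-chain handling one safe-diagram instance per step does not suffice, so either adjoin witnesses for all instances over a given stage simultaneously (your freeness argument goes through unchanged, since distinct witness blocks have no incidences between them) or use a longer chain; this is routine but worth a sentence.
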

\begin{proof}
We show that the models of $T_{m,n}$ are exactly the existentially closed models of $T^p_{m,n}$. So let $M\models T^p_{m,n}$ be existentially closed. 
To show first that $M\models T^c_{m,n}$, suppose for contradiction that $a_1,\ldots,a_m$ are distinct points in $M$, which simultaneously lie on at most $n-2$ distinct lines in $M$. By Proposition~\ref{prop:free-closure}, $M$ embeds in a model $F(M)$ of $T^c_{m,n}$, and $F(M)$ satisfies the existential formula asserting that $a_1,\dots,a_m$ simultaneously lie on $n-1$ distinct lines. This contradicts the assumption that $M$ is existentially closed. A symmetric argument shows that every $n$ distinct lines in $M$ intersect in $n-1$ distinct points.

To show that $M\models T_{m,n}$, let $\Delta(\overline{x};\overline{y})$ be a safe diagram, witnessed by the finite $A\models T^p_{m,n}$. Let $\overline{a}$ be a tuple from $M$ satisfying $\widehat{\Delta}(\overline{x})$. Build a structure $N$ containing $M$ by adding new elements $\overline{b}$ corresponding to the variables $\overline{y}$, making them points or lines as specified by $\Delta$, and adding only the new incidences between $\overline{b}$ and $\overline{a}$ specified by $\Delta$. Then $N\models T^p_{m,n}$. Indeed, suppose $K\seq N$ is a copy of $K_{m,n}$. Since $M$ is $K_{m,n}$-free, $K$ must contain an element of $\overline{b}$, say $b_i$. We assume $b_i$ is a point in $N$ (a symmetric argument works when $b_i$ is a line). Since $b_i$ lies on at most $n-1$ lines in $\overline{a}$, and thus at most $n-1$ lines in $M$, $K$ must contain another element $b_j \in \overline{b}$ such that $b_j$ is a line containing $b_i$. So $K$ contains both a point and a line in $\overline{b}$. Since the elements of $\overline{b}$ are not incident with any elements of $N$ outside of $\overline{ab}$, it follows that $K$ is entirely contained in $\overline{ab}$, contradicting the fact that $A\models T^p_{m,n}$. So we have $N\models T^p_{m,n}$, and $N\models \exists\overline{y}\,\Delta(\overline{a},\overline{y})$. Since $M$ is existentially closed, it already contains a tuple $\overline{b}'$ satisfying $\Delta(\overline{a},\overline{b}')$, and hence $M$ satisfies the axiom $\forall \overline{x}\,(\widehat{\Delta}(\overline{x})\rightarrow \exists\overline{y}\,\Delta(\overline{x};\overline{y}))$. 

We have shown that the existentially closed models of $T^p_{m,n}$ satisfy $T_{m,n}$. Conversely, suppose $M\models T_{m,n}$, and assume for contradiction that $M$ is not existentially closed. Then there exist $M\seq N\models T^p_{m,n}$, tuples $\overline{a}$ from $M$ and $\overline{b}$ from $N$, and a quantifier-free formula $\varphi(\overline{x};\overline{y})$, such that $N\models \varphi(\overline{a};\overline{b})$, but $M\models \lnot \exists \overline{y}\,\varphi(\overline{a};\overline{y})$. Note that we may assume that each element of $\overline{b}$ is in $N\backslash M$ and that the elements of $\overline{a}$ and $\overline{b}$ are distinct. Further, letting $A$ be the substructure of $N$ with domain $\overline{a}\overline{b}$, we may assume that $\varphi(\overline{x};\overline{y}) = \Diag_A(\overline{x};\overline{y})$. 

Since $M\models T^c_{m,n}$, any $n$ distinct lines in $M$ already intersect in exactly $m-1$ points in $M$. Since $N\models T^p_{m,n}$, it follows that each point in $\overline{b}$ lies on at most $n-1$ lines in $\overline{a}$. Symmetrically, each line in $\overline{b}$ contains at most $m-1$ points in $\overline{a}$. Hence $\varphi$ is a safe diagram, and since $M\models \widehat{\varphi}(\overline{a})$, $M$ fails to satisfy the corresponding axiom of $T_{m,n}$.

We have established that $T_{m,n}$ is the model companion of $T^p_{m,n}$. The fact that $T_{m,n}$ is also the model companion of $T^c_{m,n}$ follows from the fact that $T^p_{m,n}$ and $T^c_{m,n}$ are companions: every model of $T^c_{m,n}$ is a model of $T^p_{m,n}$, and every model of $T^p_{m,n}$ embeds in a model of $T^c_{m,n}$ by Proposition \ref{prop:free-closure}. 
\end{proof}

\begin{proposition}\label{prop:amalgamation}
Models of $T^c_{m,n}$ satisfy the disjoint amalgamation property.
\end{proposition}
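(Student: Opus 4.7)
The plan is to form the free amalgam $D_0 = B \sqcup_A C$ at the level of incidence structures: the underlying set is the disjoint union of $B$ and $C$ identified along the common copy of $A$, and the incidence relation is the union of those already holding in $B$ and in $C$. I would first verify that $D_0$ is $K_{m,n}$-free, so $D_0 \models T^p_{m,n}$, and then take its free completion $D = F(D_0)$. By Corollary~\ref{cor:free-closure}, $D \models T^c_{m,n}$, and since $B, C \seq D_0 \seq F(D_0) = D$ and the free completion only adjoins new elements rather than identifying old ones, the images of $B$ and $C$ in $D$ intersect exactly in $A$, giving the desired disjoint amalgam.

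The main obstacle is showing $D_0$ is $K_{m,n}$-free. Suppose $K \seq D_0$ is a copy of $K_{m,n}$ with points $p_1,\dots,p_m$ and lines $\ell_1,\dots,\ell_n$. Every incidence in $K$ must come from $B$ or from $C$, and by $K_{m,n}$-freeness of $B$ and $C$ we cannot have $K \seq B$ or $K \seq C$. So there is an element $x \in K \setminus B$ (hence $x \in C \setminus A$) and an element $y \in K \setminus C$ (hence $y \in B \setminus A$). I would split into cases according to whether these are points or lines. The key case is that $x = p$ and $y = p'$ are both points: then every incidence $I(p,\ell_j)$ must come from $C$, forcing all $\ell_j \in C$; and every $I(p',\ell_j)$ must come from $B$, forcing all $\ell_j \in B$; so all $\ell_j$ lie in $B \cap C = A$.

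At this point I would invoke the completeness axioms of $T^c_{m,n}$. Because $A \models T^c_{m,n}$, the $n$ lines $\ell_1,\dots,\ell_n$ meet in exactly $m-1$ points $q_1,\dots,q_{m-1}$ of $A$. Since $A \seq B$ and $B \models T^c_{m,n}$ also guarantees exactly $m-1$ common points of these lines in $B$, these must be precisely $q_1,\dots,q_{m-1} \seq A$; in particular any point of $B$ lying on every $\ell_j$ already belongs to $A$, contradicting $p' \in B \setminus A$. The case where $x$ and $y$ are both lines is handled symmetrically using the dual $T^c_{m,n}$ axiom (that $m$ points determine exactly $n-1$ common lines). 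The mixed case where exactly one of $x,y$ is a point collapses immediately: for instance if $x = p \in C \setminus A$ and $y = \ell \in B \setminus A$, then all $\ell_j \in C$ forces $\ell \in C$, hence $\ell \in A$, contradiction. With $D_0 \models T^p_{m,n}$ established, the passage to $D = F(D_0)$ is routine.
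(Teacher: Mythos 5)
Your proposal is correct and follows essentially the same route as the paper: form the free amalgam of $B$ and $C$ over $A$ with no new incidences, rule out a copy of $K_{m,n}$ by forcing its lines (or points) into $A$ and then using the ``exactly $m-1$'' (resp.\ ``exactly $n-1$'') axioms to contradict an element of $B\setminus A$ or $C\setminus A$ lying on all of them, and finally pass to a model of $T^c_{m,n}$ via the free completion. The only cosmetic difference is that the paper dispatches the mixed point/line case by noting there are no incidences between $B\setminus A$ and $C\setminus A$, and derives the final contradiction from $K_{m,n}$-freeness of $B$ rather than its $T^c_{m,n}$ axiom, which changes nothing of substance.
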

\begin{proof}
Fix models $A$, $B$, and $C$ of $T^c_{m,n}$ and embeddings $f\colon A\to B$ and $g\colon A\to C$. Let $D$ be free amalgamation of $B$ and $C$ over $A$, i.e., take the domain of $D$ to be the disjoint union of $B$ and $C$ over $A$ (with $f(A)\seq B$ and $g(A)\seq C$ identified), and add only the incidences coming from the structures $B$ and $C$. 

Now $D\models T^p_{m,n}$. Indeed, any copy of $K_{m,n}$ in $D$ must include an element $b\in B\backslash A$ and an element $c\in C\backslash A$, since $B$ and $C$ are each $K_{m,n}$-free. Since there are no incidences between $B\backslash A$ and $C\backslash A$, it follows that $b$ and $c$ are either both points in $D$ or both lines in $D$. Assume $b$ and $c$ are both points in $D$ (the other case is symmetric). Again, since there are no incidences between $B\backslash A$ and $C\backslash A$, all $n$ lines $y_1,\ldots,y_n$ in the copy of $K_{m,n}$ must lie in $A$. But since $A\models T^c_{m,n}$, $y_1,\ldots,y_n$ intersect in exactly $m-1$ points in $A$, which then implies that $y_1,\ldots,y_n$ intersect in at least $m$ points in $B$ (since $b\in B\backslash A$). This contradicts $B\models T^p_{m,n}$. 

By Proposition \ref{prop:free-closure}, we can embed $D$ into $D'\models T^c_{m,n}$, and the images of $B$ and $C$ in $D'$ are disjoint over the image of $A$.
\end{proof}

\begin{corollary}
$T_{m,n}$ is the model completion of $T^c_{m,n}$, and $T_{m,n}$ is complete.
\end{corollary}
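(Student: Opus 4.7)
The plan is to derive both claims from already-established results, invoking two standard facts from model theory: a model companion of a theory with amalgamation is automatically a model completion, and joint embedding combined with model completeness yields completeness.

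For the model completion claim, the plan is as follows. Fix a model $A \models T^c_{m,n}$ and two extensions $M_1, M_2 \models T_{m,n}$ of $A$; the goal is to show $M_1 \equiv_A M_2$. The first step is to view each $M_i$ as a model of $T^c_{m,n}$ containing $A$ and apply Proposition~\ref{prop:amalgamation} to amalgamate them over $A$ into some $D \models T^c_{m,n}$. Next I would use Theorem~\ref{thm:modelcompanion} to embed $D$ into some $N \models T_{m,n}$. Finally, model completeness of $T_{m,n}$ makes both embeddings $M_i \hookrightarrow N$ elementary, yielding $M_1 \equiv_A M_2$, which is precisely the model-completion condition.

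For completeness, my plan is to establish joint embedding for $T_{m,n}$ and combine it with model completeness. Given $M_1, M_2 \models T_{m,n}$, I would form the $\cL$-structure $M_1 \sqcup M_2$ as a disjoint union with no cross-incidences between the two sides, then verify it is $K_{m,n}$-free: a hypothetical embedded $K_{m,n}$ would, by the absence of cross-incidences, have all its points and lines on a single side, contradicting the $K_{m,n}$-freeness of that $M_i$. From there Proposition~\ref{prop:free-closure} embeds $M_1 \sqcup M_2$ into a model of $T^c_{m,n}$, which Theorem~\ref{thm:modelcompanion} in turn embeds into some $N \models T_{m,n}$, and model completeness yields $M_1 \equiv M_2$. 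I do not anticipate a real obstacle in either step; the only check requiring any care is the $K_{m,n}$-freeness of the disjoint union, which is essentially the $A = \emptyset$ case of the combinatorial argument already carried out in the proof of Proposition~\ref{prop:amalgamation}, and is even simpler since no elements of $A$ need be considered.
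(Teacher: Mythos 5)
Your proposal is correct and follows essentially the same route as the paper: model completion via Proposition~\ref{prop:amalgamation} combined with the model companion result, and completeness via joint embedding, obtained as the $A=\emptyset$ (disjoint union) case of the amalgamation argument. The only difference is that the paper simply cites the standard facts (model companion plus amalgamation of the base theory gives model completion; joint embedding plus model completeness gives completeness, as in Chang--Keisler), whereas you unwind those facts directly, which is fine.
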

\begin{proof}
We already know that $T_{m,n}$ is the model companion of $T^c_{m,n}$ (Theorem~\ref{thm:modelcompanion}), and $T^c_{m,n}$ has the amalgamation property (Proposition~\ref{prop:amalgamation}), so $T_{m,n}$ is the model completion of $T^c_{m,n}$ (\cite[Proposition 3.5.18]{CK}). In the proof of Proposition \ref{prop:amalgamation}, $A$ can be empty, and so $T^c_{m,n}$ also has the (dis)joint embedding property. It follows that $T_{m,n}$ also has the joint embedding property, and hence $T_{m,n}$ is complete (\cite[Proposition 3.5.11]{CK}). 
\end{proof}


Let $\M$ be a monster model of $T_{m,n}$. We call an incidence structure ``small" if its cardinality is smaller than the saturation of $\M$. We write $A\subset\M$ to mean $A$ is a small subset of $\M$.

\begin{proposition}\label{prop:extend}
Suppose $A\models T^c_{m,n}$ and $B\models T^p_{m,n}$ are small, and let $f\colon A\to \M$ and $g:A\to B$ be $\cL$-embeddings. Then there is an $\cL$-embedding $h\colon B\to \M$ such that $f=hg$. 
\end{proposition}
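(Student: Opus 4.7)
The plan is to identify $A$ with $g(A)\subseteq B$ and regard $f$ as extending to the desired embedding by realizing a partial type in $\M$. Let $p(\overline{z})$ be the atomic diagram of $B$ over $A$, viewed as a partial type over $f(A)$ where the variables $\overline{z}$ enumerate $B\setminus A$. Since $B$ (and hence $A$) is small, $\M$ is sufficiently saturated to realize any $|B|$-sized partial type over $f(A)$ that is finitely satisfiable, so the whole task reduces to establishing finite satisfiability of $p$ in $\M$.

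So fix a finite subset of $p$; it mentions a finite tuple $\overline{b}\subseteq B\setminus A$ and a finite tuple $\overline{a}_0\subseteq A$. Enlarge $\overline{a}_0$ to a tuple $\overline{a}$ by adjoining every element of $A$ that is incident in $B$ with some entry of $\overline{b}$. The main technical observation, which I expect to be the crux, is that this enlargement is still finite: if $b\in \overline{b}$ is a point incident with $n$ distinct lines $\ell_1,\ldots,\ell_n$ of $A$, then because $A\models T^c_{m,n}$ these lines already meet in exactly $m-1$ points of $A$, and adjoining $b$ would produce $m$ common points of $\ell_1,\ldots,\ell_n$ in $B$, contradicting $B\models T^p_{m,n}$. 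So each point in $\overline{b}$ has at most $n-1$ incidences in $A$; symmetrically each line in $\overline{b}$ has at most $m-1$. Hence $\overline{a}$ is finite.

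Now let $\Delta(\overline{x};\overline{y})=\Diag_{\overline{a}\cup\overline{b}}(\overline{x};\overline{y})$, with $\overline{x}$ labeling $\overline{a}$ and $\overline{y}$ labeling $\overline{b}$. This is the atomic diagram of a substructure of $B$, hence of a finite model of $T^p_{m,n}$; and by the bound in the previous paragraph it satisfies the safety conditions (2) and (3) of Definition~\ref{def:safe}. So $\Delta$ is a safe diagram. Since $f$ is an $\cL$-embedding, $\M\models\widehat{\Delta}(f(\overline{a}))$, and the corresponding axiom of $T_{m,n}$ yields a tuple $\overline{b}'$ in $\M$ with $\M\models\Delta(f(\overline{a}),\overline{b}')$. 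This $\overline{b}'$ realizes the chosen finite subset of $p$.

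Finite satisfiability of $p$ is thus established, saturation of $\M$ produces a realization of $p$, and reading off the correspondence $b\mapsto$ (its realization) defines the required $\cL$-embedding $h\colon B\to\M$ with $h\circ g=f$. The only potentially delicate point is the finiteness observation for $\overline{a}$; once that is in hand, the rest is a direct application of the axiom scheme defining $T_{m,n}$ together with saturation.
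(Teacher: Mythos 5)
Your proof is correct and follows essentially the same route as the paper's: reduce by saturation and compactness to realizing a finite piece of the diagram of $B$ over $A$, observe that the resulting diagram is safe because $A\models T^c_{m,n}$ together with $K_{m,n}$-freeness of $B$ forces each point (resp.\ line) of $\overline{b}$ to meet at most $n-1$ lines (resp.\ $m-1$ points) of $A$, and then invoke the corresponding axiom of $T_{m,n}$. The only difference is your enlargement of $\overline{a}_0$ to $\overline{a}$, which is harmless but unnecessary, since the safety conditions in Definition~\ref{def:safe} only count incidences with variables in $\overline{x}$ and thus already hold for $\overline{a}_0$ itself.
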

\begin{proof}
Without loss of generality assume $f$ is inclusion, and so $A\subset\M$. Let $\overline{a}$ be an enumeration of $A$ and $\overline{b}$ an enumeration of $B\backslash A$. Let $\Delta(\overline{a};\overline{b})$ be $\Diag_B(\overline{a};\overline{b})$. We want to realize $\Delta(\overline{a};\overline{y})$ in $\M$. By saturation and compactness, we may assume $\overline{a}$ and $\overline{b}$ are finite tuples from $A$ and $B\backslash A$, respectively. Since $A\models T^c_{m,n}$,  $\Delta(\overline{x};\overline{y})$ is a safe diagram (witnessed by the substructure of $B$ with domain enumerated by $\overline{ab}$). By assumption, $\M\models\widehat{\Delta}(\overline{a})$. So the desired realization of $\Delta(\overline{a};\overline{y})$ exists in $\M$, since $\M\models T_{m,n}$.
\end{proof}

\begin{corollary}\label{cor:extend}
Suppose $A\subset\M$ is a model of $T^c_{m,n}$ and $C\seq A$. Fix $\overline{a}\in A$ and suppose $f\colon A\to \M$ is an $\cL$-embedding fixing $C$ pointwise. Then $\overline{a}\equiv_C f(\overline{a})$.
\end{corollary}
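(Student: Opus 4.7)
The plan is to exploit the model completeness of $T_{m,n}$ (which holds because $T_{m,n}$ is the model companion of $T^c_{m,n}$ by Theorem \ref{thm:modelcompanion}) to reduce the statement to a comparison of existential formulas satisfied by $\overline{a}$ and $f(\overline{a})$. Thus it suffices to show, for every quantifier-free $\varphi(\overline{x},\overline{z},\overline{y})$ and every tuple $\overline{c}$ from $C$, that
\[
\M \models \exists \overline{y}\, \varphi(\overline{a},\overline{c},\overline{y}) \iff \M \models \exists \overline{y}\, \varphi(f(\overline{a}),\overline{c},\overline{y}).
\]

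First I would handle the forward direction. Suppose $\overline{b}\in \M$ witnesses $\varphi(\overline{a},\overline{c},\overline{b})$. Let $B$ be the $\cL$-substructure of $\M$ on $A \cup \overline{b}$; since $\M$ is $K_{m,n}$-free and $T^p_{m,n}$ is universal, $B \models T^p_{m,n}$, and $B$ is small as $A$ is small and $\overline{b}$ is finite. Apply Proposition \ref{prop:extend} with the hypothesis model $A \models T^c_{m,n}$, the given embedding $f\colon A \to \M$, and $g\colon A \to B$ the inclusion, to obtain an $\cL$-embedding $h \colon B \to \M$ with $h|_A = f$. Since $h$ preserves quantifier-free formulas and fixes $C$ pointwise (as $f$ does), $\M \models \varphi(f(\overline{a}),\overline{c},h(\overline{b}))$, so $h(\overline{b})$ is the desired witness. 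For the reverse direction, the same argument applies symmetrically, using $f^{-1}\colon f(A) \to A \subseteq \M$ in place of $f$: note that $f(A) \models T^c_{m,n}$ since $f$ is an $\cL$-isomorphism onto its image, and $f^{-1}$ fixes $C$ pointwise. Given a witness $\overline{b}'\in \M$ for $\exists \overline{y}\,\varphi(f(\overline{a}),\overline{c},\overline{y})$, let $B' = f(A) \cup \overline{b}'$ as an $\cL$-substructure of $\M$, so that $B' \models T^p_{m,n}$, and apply Proposition \ref{prop:extend} to extend $f^{-1}$ to $B'$, producing a witness $h'(\overline{b}')$ for $\overline{a}$ over $\overline{c}$.

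There is no real obstacle here: the corollary is essentially a direct repackaging of Proposition \ref{prop:extend} combined with the model completeness of $T_{m,n}$. The only point that deserves a moment of attention is the verification that $B$ and $B'$, obtained by adjoining finite tuples from $\M$ to $A$ or $f(A)$, really satisfy $T^p_{m,n}$; this is immediate from the fact that $T^p_{m,n}$ is universal and $\M \models T^p_{m,n}$. Once this is noted, both the forward and backward implications drop out of a single invocation of Proposition \ref{prop:extend}.
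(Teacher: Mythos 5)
Your proof is correct, and it rests on the same two ingredients as the paper's --- Proposition \ref{prop:extend} together with model completeness of $T_{m,n}$ --- but it organizes them differently. The paper picks a small elementary submodel $M\preceq \M$ with $A\seq M$, applies Proposition \ref{prop:extend} once to extend $f$ to $h\colon M\to\M$, and then uses model completeness in the form ``embeddings between models are elementary'' to transfer the full type of $\overline{a}$ over $C$ in one stroke. You instead use model completeness in the equivalent form ``every formula is equivalent to an existential one,'' reduce to transferring witnesses of quantifier-free formulas, and apply Proposition \ref{prop:extend} twice: once to the substructure $A\cup\overline{b}$ with the embedding $f$, and once symmetrically to $f(A)\cup\overline{b}'$ with $f^{-1}$ (correctly noting that $f(A)\models T^c_{m,n}$ and $f^{-1}$ fixes $C$). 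What your route buys is that you never need to choose an elementary submodel --- you only ever extend by finite tuples --- at the cost of the explicit reduction to existential formulas and of running the argument in both directions; the paper's route is shorter because elementarity of $h$ gives both directions at once. Both arguments are complete and correct.
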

\begin{proof}
Pick a small elementary submodel $M\preceq \M$ with $A\seq M$. By Proposition \ref{prop:extend}, we may extend $f$ to an $\cL$-embedding $h\colon M\to \M$. Since $T_{m,n}$ is model complete, $h$ is an elementary embedding, so $\tp_{\M}(\overline{a}/C) = \tp_M(\overline{a}/C) = \tp_{\M}(h(\overline{a})/h(C)) = \tp_{\M}(f(\overline{a})/C)$.
\end{proof}

\begin{definition}
Fix $B\models T^p_{m,n}$. We say that a subset $A\seq B$ is \emph{$I$-closed (in $B$)} if, for all pairwise distinct $a_1,\ldots,a_m\in A\cap P(B)$, if some $b\in L(B)$ is incident to each of the $a_i$, then $b\in A$, and, dually, for all pairwise distinct $b_1,\ldots,b_n\in A\cap L(B)$, if some $a\in P(B)$ is incident to each of the $b_j$, then $a\in A$. The \emph{$I$-closure in $B$} of a set $A\seq B$ is the smallest $I$-closed subset of $B$ containing $A$. If the $I$-closure of $A$ in $B$ is all of $B$, then we say $A$ \emph{generates} $B$.
\end{definition}

\begin{proposition}\label{prop:Iacl}
If $A\subset \M$ then $\acl(A)$ is the $I$-closure of $A$ in $\M$. Moreover, $\acl(A)$ is the smallest model $M$ of $T^c_{m,n}$ such that $A\seq M\seq \M$. 
\end{proposition}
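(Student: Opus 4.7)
The plan is to prove two facts together: that the $I$-closure $I(A)$ of $A$ in $\M$ equals $\acl(A)$, and that it is the smallest model of $T^c_{m,n}$ with $A \seq I(A) \seq \M$. I first verify that the $I$-closure of any set in $\M$ is itself a $T^c_{m,n}$-model. Given $m$ distinct points of $I(A)$, the $n-1$ lines of $\M$ through them all lie in $I(A)$ by the $I$-closure condition; by the dual argument for lines, $I(A)$ inherits a $T^c_{m,n}$-structure. Conversely, any $T^c_{m,n}$-model $M$ lying between $A$ and $\M$ is automatically $I$-closed in $\M$, since $m$ points of $M$ already support $n-1$ lines in $M$, leaving no room inside $\M \models T^c_{m,n}$ for a further line of $\M$ through them. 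Thus $M \supseteq I(A)$, so the ``moreover'' clause will follow once $I(A) = \acl(A)$ is established.

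To show $I(A) \seq \acl(A)$, I proceed by induction on the iterative construction of $I(A)$. Each new line is added because it is incident to $m$ specified points $a_1, \dots, a_m$ from the previous stage; the set of lines in $\M$ incident to $a_1, \dots, a_m$ is $\{a_1,\dots,a_m\}$-definable and has size at most $n-1$ by $K_{m,n}$-freeness of $\M$, so its members lie in $\acl(A)$. The dual argument handles newly added points.

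The main work is the reverse inclusion $\acl(A) \seq I(A)$. Fix $b \in \M \setminus I(A)$; I will produce, for every $k \geq 1$, at least $k$ pairwise distinct $A$-conjugates of $b$. Set $B = I(A)$ and $M' = I(A \cup \{b\})$, both models of $T^c_{m,n}$ by the first paragraph. Iterating Proposition~\ref{prop:amalgamation}, I assemble a $T^c_{m,n}$-model $D$ containing $B$ together with $k$ pairwise disjoint (over $B$) copies of $M'$, yielding $k$ pairwise distinct copies $b_1, \dots, b_k$ of $b$ in $D \setminus B$. Proposition~\ref{prop:extend} then extends the inclusion $B \hookrightarrow \M$ to an $\cL$-embedding $h \colon D \to \M$ fixing $B$ pointwise. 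For each $i$, the composite $M' \hookrightarrow D \xrightarrow{h} \M$ is an $\cL$-embedding of the $T^c_{m,n}$-model $M'$ into $\M$ fixing $A$ pointwise, so Corollary~\ref{cor:extend} yields $h(b_i) \equiv_A b$. Since $h$ is injective, the $h(b_i)$ are pairwise distinct, giving $k$ distinct $A$-conjugates of $b$. As $k$ was arbitrary, $b \notin \acl(A)$.

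The hard part will be this last step: choosing the right iterated disjoint amalgamation so that $k$ copies of $b$ appear in a single ambient $T^c_{m,n}$-model, and then combining Proposition~\ref{prop:extend} with Corollary~\ref{cor:extend} to convert those combinatorial copies into genuine $A$-conjugates inside $\M$. Everything else is bookkeeping: the $T^c$-axiom ``exactly $n-1$ lines'' drives the closure calculations, and $K_{m,n}$-freeness supplies the easy algebraicity direction.
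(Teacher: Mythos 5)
Your proposal is correct and follows essentially the same route as the paper: both directions hinge on the observation that $I$-closed sets are exactly the models of $T^c_{m,n}$ sitting inside $\M$, and non-algebraicity of elements outside the $I$-closure is obtained by disjoint amalgamation (Proposition~\ref{prop:amalgamation}) followed by Proposition~\ref{prop:extend} and Corollary~\ref{cor:extend} to produce extra $A$-conjugates. The only cosmetic difference is that you iterate the amalgamation to get $k$ disjoint copies of $I(A\cup\{b\})$ over $I(A)$ for arbitrary $k$, whereas the paper amalgamates just two copies of $\acl(A)$ over an $I$-closed base and derives a contradiction from $2k>k$.
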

\begin{proof}
For any $A\subset \M$, the $I$-closure of $A$ in $\M$ is contained in $\acl(A)$, contained in every model of $T^c_{m,n}$ containing $A$, and is itself a model of $T^c_{m,n}$. So it suffices to fix an $I$-closed set $A\subset \M$, and show $\acl(A)=A$. Suppose for contradiction that there is some $b_1\in \acl(A)\backslash A$, and that $\tp(b_1/A)$ has $k\in\omega$ realizations $b_1,b_2,\dots,b_k$, all of which are in $\acl(A)\backslash A$. Now $A$ and $\acl(A)$ are $I$-closed, so they are both models of $T^c_{m,n}$. By Proposition~\ref{prop:amalgamation}, we can find $D\models T^c_{m,n}$ and embeddings $f,g\colon \acl(A)\to D$ such that $f(\acl(A))$ and $g(\acl(A))$ are disjoint over $f(A) = g(A)$ (and we may assume $A\seq D$ and $f$ and $g$ restrict to inclusion on $A$). Then $A\models T^c_{m,n}$ is a substructure of $D\models T^c_{m,n}$ and $\M$, and so, by Proposition \ref{prop:extend}, there is an $\cL$-embedding $h\colon D\to\M$ which fixes $A$ pointwise. But then $h\circ f$ and $h\circ g$ are both $\cL$-embeddings $\acl(A)\to \M$ which fix $A$ pointwise, so the $2k$ elements $\{h(f(b_i)),h(g(b_i))\mid 1\leq i\leq k\}$ all satisfy $\tp(b_1/A)$ in $\M$ by Corollary \ref{cor:extend}, which is a contradiction.
\end{proof}

Putting together Corollary~\ref{cor:extend} and Proposition~\ref{prop:Iacl} yields the following result:

\begin{corollary}\label{cor:acltype}
Given $C\subset\M$ and tuples $\overline{a},\overline{a}'\in\M$, $\tp(\overline{a}/C) = \tp(\overline{a}'/C)$ if and only if $\acl(\overline{a}C) \cong \acl(\overline{a}'C)$ by an $\mathcal{L}$-isomorphism fixing $C$ pointwise and sending $\overline{a}$ to $\overline{a'}$.
\end{corollary}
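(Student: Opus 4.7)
The plan is to prove both directions separately, with the forward direction being a standard automorphism-extension argument and the backward direction being a direct application of the two preceding results.

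For the forward direction, suppose $\tp(\overline{a}/C) = \tp(\overline{a}'/C)$. By homogeneity of the monster model $\M$, there is an automorphism $\sigma \in \Aut(\M/C)$ with $\sigma(\overline{a}) = \overline{a}'$. Since algebraic closure is invariant under automorphisms, $\sigma(\acl(\overline{a}C)) = \acl(\overline{a}'C)$, so the restriction $\sigma\restriction_{\acl(\overline{a}C)}$ is the desired $\mathcal{L}$-isomorphism (any automorphism of $\M$ is certainly an $\mathcal{L}$-isomorphism on any subset).

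For the backward direction, suppose $f \colon \acl(\overline{a}C) \to \acl(\overline{a}'C)$ is an $\mathcal{L}$-isomorphism fixing $C$ pointwise with $f(\overline{a}) = \overline{a}'$. By Proposition~\ref{prop:Iacl}, $A := \acl(\overline{a}C)$ is a model of $T^c_{m,n}$ with $A \subset \M$. The map $f$ can be viewed as an $\mathcal{L}$-embedding $f \colon A \to \M$ (via the inclusion $\acl(\overline{a}'C) \seq \M$) which fixes $C$ pointwise. Corollary~\ref{cor:extend} then gives $\overline{a} \equiv_C f(\overline{a}) = \overline{a}'$, which is exactly what we want.

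The main obstacle, which has already been overcome in the earlier results, is ensuring that algebraic closures are actually models of $T^c_{m,n}$ (needed to invoke Corollary~\ref{cor:extend}); this was precisely the content of Proposition~\ref{prop:Iacl}. Once that is in hand, the corollary is essentially a one-line consequence, so the main care in the writeup will be the bookkeeping of which map plays the role of $f$ in Corollary~\ref{cor:extend}.
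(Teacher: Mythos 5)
Your proof is correct and follows exactly the route the paper intends: the backward direction is Corollary~\ref{cor:extend} applied to $\acl(\overline{a}C)$, which is a model of $T^c_{m,n}$ by Proposition~\ref{prop:Iacl}, and the forward direction is the standard homogeneity argument in the monster model.
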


$T_{m,n}$ does not have quantifier elimination, since quantifiers are necessary to describe the isomorphism type of the algebraic closure of a tuple. But we do get quantifier elimination down to existential quantification over the algebraic closure. This is frequently called \emph{almost quantifier elimination} in the literature (e.g.\ \cite{ChaPil}). 

\begin{definition}
Given a complete quantifier-free type $p$ in variables $x_1,\ldots,x_k$, let  $\Delta_p(\overline{x})$ denote the conjunction of all formulas in $p$. A \emph{basic existential formula} is one of the form
\[
\exists\overline{y}\, \Delta_p(\overline{x};\overline{y}),
\]
where $p$ is a complete quantifier-free type in variables $\overline{xy}$, such that $p$ implies that $\overline{x}$ generates $\overline{xy}$. In particular, if the tuple $\overline{y}$ is empty, $\Delta_p(\overline{x})$ is a basic existential formula. 
\end{definition}

The notion of a complete quantifier-free type differs from the notion of the diagram of a finite structure in that a complete quantifier-free type may contain positive instances of equality between the variables.

\begin{proposition}\label{prop:partialqe}
Modulo $T_{m,n}$, every formula is equivalent to a finite disjunction of basic existential formulas.
\end{proposition}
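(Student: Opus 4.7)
The plan is to prove that every complete type over $\emptyset$ (in finitely many variables) modulo $T_{m,n}$ is determined by the set of basic existential formulas it contains; the proposition will then follow from a routine compactness argument.

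The key claim is: if tuples $\overline{a}, \overline{a'} \in \M$ satisfy the same basic existential formulas, then $\tp(\overline{a}) = \tp(\overline{a'})$. To prove this, I will build the algebraic closure $\acl(\overline{a})$ in stages. By Proposition~\ref{prop:Iacl}, $\acl(\overline{a}) = \bigcup_k I_k(\overline{a})$, where $I_0(\overline{a}) = \overline{a}$ and $I_{k+1}(\overline{a})$ is obtained from $I_k(\overline{a})$ by adjoining every line of $\M$ incident to $m$ distinct points of $I_k(\overline{a})$ together with every point of $\M$ incident to $n$ distinct lines of $I_k(\overline{a})$. Each $I_k(\overline{a})$ is finite (there are only finitely many relevant subsets, and by $K_{m,n}$-freeness only finitely many completing incidences), and by construction $\overline{a}$ generates $I_k(\overline{a})$ as a substructure. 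Hence $\psi_k(\overline{x}) := \exists \overline{y}\, \Delta_{p_k}(\overline{x}; \overline{y})$, where $p_k = \qftp(\overline{a}, \overline{b}_k)$ and $\overline{b}_k$ enumerates $I_k(\overline{a}) \setminus \overline{a}$, is a basic existential formula satisfied by $\overline{a}$, and hence also by $\overline{a'}$ by hypothesis.

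To assemble the witnesses coherently, I would invoke saturation of $\M$. The partial type over $\overline{a'}$, in variables indexed by $\acl(\overline{a}) \setminus \overline{a}$, asserting that each finite restriction realizes the corresponding $p_k$, is finitely satisfiable via the $\psi_k$-witnesses; a realization produces a substructure $M' \seq \M$ containing $\overline{a'}$ together with an $\cL$-isomorphism $\acl(\overline{a}) \cong M'$ sending $\overline{a}$ to $\overline{a'}$. Then $M' \models T^c_{m,n}$ and $\overline{a'}$ generates $M'$, so $M' \seq \acl(\overline{a'})$. The main technical step — and the likely source of difficulty — is the reverse inclusion, via showing that $M'$ is $I$-closed in $\M$: if $b \in L(\M)$ were incident to $m$ distinct points of $M'$ but not in $M'$, then together with the $n-1$ lines through those points already present in $M' \models T^c_{m,n}$ we would produce a copy of $K_{m,n}$ in $\M$; the point case is dual. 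Hence $M' = \acl(\overline{a'})$, and Corollary~\ref{cor:acltype} yields $\tp(\overline{a}) = \tp(\overline{a'})$.

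With the claim in hand, the proposition follows by standard compactness. For any formula $\varphi(\overline{x})$ and complete type $q \ni \varphi$, the basic existential formulas in $q$ entail $\varphi$ modulo $T_{m,n}$, so compactness produces a finite conjunction $\bigwedge_i \psi_i$ of such formulas with $T_{m,n} \vdash \bigwedge_i \psi_i \to \varphi$. Collecting quantifiers and splitting over the finitely many completions of the combined quantifier-free condition on the merged variable tuple shows that $\bigwedge_i \psi_i$ is equivalent to a finite disjunction of basic existential formulas — the generation condition is preserved because each completion extends the original generating quantifier-free types, forcing the merged tuple into the $I$-closure of $\overline{x}$. Hence $q$ contains a basic existential formula implying $\varphi$ modulo $T_{m,n}$, and one further compactness step across all such $q$ delivers the desired finite disjunction equivalent to $\varphi$.
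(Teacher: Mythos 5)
Your proof is correct, but it takes a genuinely different route from the paper's. The paper argues topologically on $S_k(T_{m,n})$: the sets $U_\varphi$ for basic existential $\varphi$ generate a compact topology coarser than the Stone topology, so it suffices to show this topology is Hausdorff; two distinct types are then separated by building the algebraic closures of realizations in stages, using K\"{o}nig's lemma to find a single finite stage at which the generated structures differ, and observing that the two basic existential formulas describing those stages are mutually inconsistent (this rests on the somewhat delicate point that such a formula pins down the stage-$t$ structure of \emph{every} realization exactly). You instead prove that the basic existential formulas contained in a complete type entail it modulo $T_{m,n}$: you glue the stage formulas $\psi_k$ all at once via saturation to obtain an embedded copy $M'$ of $\acl(\overline{a})$ over $\overline{a}'$, identify $M'=\acl(\overline{a}')$ (your $K_{m,n}$-freeness argument is exactly the ``smallest model of $T^c_{m,n}$'' clause of Proposition~\ref{prop:Iacl}, which you could cite directly), and conclude with Corollary~\ref{cor:acltype}; then two rounds of first-order compactness, together with the syntactic observation that a conjunction of basic existential formulas is equivalent to a finite disjunction of basic existential formulas (splitting over the finitely many quantifier-free completions of the merged tuple, generation being inherited from the conjuncts), yield the proposition. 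Two remarks: first, you state the key claim symmetrically (``same basic existential formulas''), but the compactness step needs, and your argument in fact delivers, the one-directional version that any complete type containing all basic existential formulas of $q$ equals $q$; second, your conjunction-to-disjunction lemma is not overhead peculiar to your route, since the paper needs the same fact when asserting that $\mathcal{B}$ is a basis for a topology and simply leaves the check to the reader. As for what each approach buys: the paper's K\"{o}nig's-lemma step produces an explicitly finitary separation of distinct types by a pair of inconsistent basic existential formulas, at the cost of the exact-determination claim about staged configurations, whereas your version avoids that claim entirely, trading it for the saturation/gluing argument and explicit compactness bookkeeping, and it isolates a reusable statement, namely that types over $\emptyset$ are determined by their basic existential fragment.
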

\begin{proof}
Fix $k>0$ and variables $x_1,\dots,x_k$. For each basic existential formula $\varphi(\overline{x})$, let $U_\varphi = \{p\in S_k(T_{m,n})\mid \varphi(\overline{x}) \in p\}$. Let $\mathcal{B} = \{U_\varphi\mid \varphi(\overline{x}) \text{ basic existential}\}$. It is straightforward to check that $\mathcal{B}$ is a basis for a topology $\mathcal{T}$ on $S_k(T_{m,n})$. Note also that $\mathcal{T}$ is compact, since it is weaker than the usual topology on $S_k(T_{m,n})$. In particular, any $\mathcal{T}$-clopen subset of $S_k(T_{m,n})$ is equal to a finite union of open sets in $\mathcal{B}$. Therefore, to prove the result, it suffices to show that $\mathcal{T}$ is in fact the usual topology on $S_k(T_{m,n})$. Since two comparable compact Hausdorff topologies on the same set are equal, we only need to show $\mathcal{T}$ is Hausdorff. 

Fix $k$-tuples $\overline{a}$ and $\overline{a}'$ in $\M$ such that $\tp(\overline{a}/\emptyset) \neq \tp(\overline{a}'/\emptyset)$. Note that we can build up $\acl(\overline{a})$ in stages: Let $A_0$ be the induced substructure of $\M$ on $\overline{a}$, and given $A_t$, let $A_{t+1}$ be $A_t$ together with those lines in $\M$ which are incident with $m$ distinct points in $A_t$ and those points in $\M$ which are incident with $n$ distinct lines in $A_t$. Then $A_t$ is finite for all $t$, and $\acl(\overline{a}) = \bigcup_{t\in \omega} A_t$. Similarly, we can build up $\acl(\overline{a}')$ in stages $(A_t')_{t\in\omega}$.  We claim that, for some $t\in\omega$, $A_t$ and $A'_t$ are not isomorphic via an isomorphism mapping $\overline{a}$ to $\overline{a}'$. Indeed, if this were not the case then a direct application of K\"{o}nig's lemma would produce an isomorphism from $\acl(\overline{a})$ to $\acl(\overline{a}')$, which maps $\overline{a}$ to $\overline{a}'$, contradicting Corollary~\ref{cor:acltype}. So let $t$ be as claimed. Let $\overline{b}$ enumerate $A_t\backslash A_0$, let $q$ be the quantifier-free type of $\overline{a}\overline{b}$, and let $\varphi(\overline{x})$ be the basic existential formula $\exists\overline{y}\,\Delta_q(\overline{x};\overline{y})$. Similarly, let $\varphi'(\overline{x})$ be $\exists\overline{y}'\,\Delta_{q'}(\overline{x};\overline{y}')$, where $q'$ is the quantifier-free type of $\overline{a}'\overline{b}'$ and $\overline{b}'$ enumerates $A_t'\backslash A_0'$. Since $\varphi$ and $\varphi'$ assert that the structures generated from $\overline{x}$ in $k$ stages are isomorphic to $A_t$ and $A_t'$, respectively, the formula $\varphi(\overline{x})\land \varphi'(\overline{x})$ is inconsistent. So $U_{\varphi}$ and $U_{\varphi'}$ are disjoint $\mathcal{T}$-open sets separating $\tp(\overline{a}/\emptyset)$ and $\tp(\overline{a}'/\emptyset)$. 
\end{proof}

We close this section with some remarks on the special case of $T_{2,2}$ and combinatorial projective planes. In $T_{2,2}$, algebraic closure coincides with definable closure. In particular, we have a definable function $H\colon \M\times \M\to \M$ such that, for distinct points (resp. lines) $x$ and $y$, $H(x,y)=H(y,x)$ is the unique line (resp. point) incident to both $x$ and $y$, and $H(x,y)=x$ otherwise.  If we expand the language by a symbol for $H$, and let 
$\widehat{T}_{2,2}$ be $T_{2,2}$ together with the definition of $H$, then $\widehat{T}_{2,2}$ has quantifier elimination.

Models of $T_{2,2}$ are non-Desarguesian projective planes, i.e., they fail to satisfy Desargues's theorem (see~\cite{Hall} for the definition). If we restrict our attention to the Desarguesian projective planes, we find that this class does not have a model companion. This follows from the fact that the theory of Desarguesian planes with constants naming a quadrangle (four points with no three collinear) is bi-interpretable with the theory of division rings. Moreover, the interpretations are especially well-behaved (in particular, the formulas involved are existential), and one can check that the existentially closed Desarguesian planes correspond exactly to the existentially closed division rings. But the theory of division rings fails to have a model companion (see~\cite[Theorem 14.13]{HW}), so the same is true of the theory of Desarguesian projective planes. 

The same argument shows that, for any $d>2$, the class of $d$-dimensional projective spaces does not have a model companion; these spaces always satisfy Desargues's theorem, and hence are always coordinatized by division rings. On the other hand, the class of Pappian projective planes, which are exactly those coordinatized by fields, has a model companion, which is bi-interpretable with the theory of algebraically closed fields.

\section{Countable models}\label{sec:countable}

In this section, we consider the behavior of countable models of $T_{m,n}$. Recall that a countable theory $T$ is called \emph{small} if it has a countable saturated model (equivalently, $S_k(T)$ is countable for all $k>0$). We will show that $T_{m,n}$ is not small (unless $m=1$ or $n=1$, in which case $T_{m,n}$ is $\aleph_0$-categorical). On the other hand, for $m,n\geq 2$, the question of whether $T_{m,n}$ has a prime model seems to be a hard combinatorial problem. In the case of $T_{2,2}$, we show it is equivalent to a longstanding open problem in the theory of projective planes (see Theorem \ref{thm:primemodel}).

Let us first consider the special cases when $T_{m,n}$ is $\aleph_0$-categorical.

\begin{remark}\label{rem:m=1}
Models of $T_{m,1}$ are existentially closed incidence structures in which all lines contain exactly $m-1$ points. It follows that the algebraic closure of a set $A\subset\M$ is precisely $A$ together with all points incident to some line in $A$. Hence the algebraic closure of a finite set is finite, and $T$ is $\aleph_0$-categorical (using Corollary \ref{cor:acltype} to count types over $\emptyset$). Similarly, $T_{1,n}$ is $\aleph_0$-categorical. (From the point of view of classification theory, the theories $T_{m,1}$ and $T_{1,n}$ also behave very differently than the theories $T_{m,n}$ for $m,n\geq 2$, see Section \ref{subsec:stable}).
\end{remark}

In light of the previous remark, we focus on the case when $m,n\geq 2$. Toward showing $T_{m,n}$ is not small, the following lemma will be useful in allowing us to reduce to the case of $T_{2,2}$.

\begin{lemma}\label{lem:interpretable}
If $m_0\leq m$, $n_0\leq n$, and $M\models T_{m,n}$ then, using parameters, $M$ interprets a model of $T_{m_0,n_0}$.
\end{lemma}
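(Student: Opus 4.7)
The plan is to realize a model $M^*$ of $T_{m_0,n_0}$ as a parameter-definable substructure of $M$, where the parameters supply a $K_{m-m_0,n-n_0}$ configuration that completes what is missing. First, a safe-diagram axiom of $T_{m,n}$ (with empty $\overline{x}$) applied to the diagram of $K_{m-m_0,n-n_0}$ yields pairwise incident points $p_1,\dots,p_{m-m_0}$ and lines $\ell_1,\dots,\ell_{n-n_0}$ in $M$; this diagram is safe because conditions (2) and (3) of Definition \ref{def:safe} are vacuous when $\overline{x}$ is empty, and $K_{m-m_0,n-n_0}\models T^p_{m,n}$. (If $m_0=m$ or $n_0=n$, the corresponding parameter tuple is empty and the construction simplifies accordingly.) Then set $P^*(x)\equiv P(x)\wedge\bigwedge_k I(x,\ell_k)\wedge\bigwedge_j x\neq p_j$ and $L^*(y)\equiv L(y)\wedge\bigwedge_j I(p_j,y)\wedge\bigwedge_k y\neq\ell_k$, and let $M^*$ be the $\cL$-structure with universe $P^*(M)\cup L^*(M)$, with $P,L$ interpreted by $P^*,L^*$ and $I$ by restriction.

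The bulk of the argument is to show $M^*\models T_{m_0,n_0}$. For $K_{m_0,n_0}$-freeness of $M^*$: a copy of $K_{m_0,n_0}$ inside $M^*$ together with the parameters would form a $K_{m,n}$ in $M$, since each parameter point is $I$-related to each parameter line by choice of parameters and to each $M^*$-line by definition of $L^*$, and dually. For the completion axioms of $T^c_{m_0,n_0}$: given $m_0$ distinct $M^*$-points, the $m$ distinct $M$-points obtained by adjoining $\overline{p}$ lie on exactly $n-1$ common $M$-lines by $T^c_{m,n}$; the $n-n_0$ parameter lines are among these, so the remaining $n_0-1$ lines must lie in $L^*(M)$ and witness the required completion in $M^*$. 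The dual statement for lines is symmetric.

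For the safe-diagram axioms of $T_{m_0,n_0}$, given a safe diagram $\Delta(\overline{x};\overline{y})$ and a tuple $\overline{a}$ in $M^*$ with $M^*\models\widehat{\Delta}(\overline{a})$, I would build an augmented diagram $\Delta'(\overline{x},\overline{x'};\overline{y})$ for $T_{m,n}$ by (i) adjoining the atomic diagram of the parameter $K_{m-m_0,n-n_0}$ on fresh variables $\overline{x'}$, and (ii) for every variable $z\in\overline{x}\overline{y}$ that $\Delta$ declares a point (resp.\ line), adjoining the atoms $I(z,\ell_k)$ for all $k$ (resp.\ $I(p_j,z)$ for all $j$). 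Safety of $\Delta'$ follows by a direct recount: each point-variable in $\overline{y}$ now has at most $(n_0-1)+(n-n_0)=n-1$ incident line-variables, dually for lines, and $\Delta'$ is still the diagram of a $K_{m,n}$-free structure since a $K_{m,n}$ inside it would, by pigeonhole between its points and lines in the parameters versus outside, induce a $K_{m_0,n_0}$ in the underlying model of $\Delta$. Plugging in $\overline{a}$ and the parameters, $M\models\widehat{\Delta'}$, so the $T_{m,n}$ axiom for $\Delta'$ produces $\overline{b}$ in $M$ realizing $\Delta'(\overline{a},\overline{p},\overline{\ell};\overline{y})$; the added atoms force $\overline{b}\subseteq P^*(M)\cup L^*(M)$, and then $M^*\models\Delta(\overline{a};\overline{b})$. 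The main obstacle I foresee is exactly this bookkeeping in the augmentation step, particularly the verification that $\Delta'$ genuinely comes from a model of $T^p_{m,n}$ with the correct labeling; everything else is routine once $M^*$ is in place.
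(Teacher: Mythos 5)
Your proposal is correct and takes essentially the same route as the paper's proof: the same parameter configuration $K_{m-m_0,n-n_0}$, the same definable sets of points and lines incident to all the parameter lines (resp.\ points), and the same three verifications, including the augmented safe diagram (the paper's $\Delta_*$) to transfer the genericity axioms from $T_{m,n}$ to the interpreted structure. Your extra remarks (obtaining the parameters from a safe diagram with empty $\overline{x}$, and the degenerate cases $m_0=m$ or $n_0=n$) are harmless refinements of details the paper leaves implicit.
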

\begin{proof}
In $M$ we may find $m-m_0$ points $\overline{c}=(c_1,\ldots,c_{m-m_0})$ and $n-n_0$ lines $\overline{d}=(d_1,\ldots,d_{n-n_0})$ such that the incidence structure on $\overline{cd}$ is $K_{m-m_0,n-n_0}$. Let $M_0$ be the induced substructure of $M$ such that $P(M_0)$ is the set of points in $M\backslash\overline{c}$ incident to all lines in $\overline{d}$, and $L(M_0)$ is the set of lines in $M\backslash\overline{d}$ incident to all points in $\overline{c}$. We show $M_0\models T_{m_0,n_0}$. 

Note that $M_0$ is an incidence structure by construction. Moreover, any copy of $K_{m_0,n_0}$ in $M_0$ would induce a copy of $K_{m,n}$ in $M$ by adding the points $\overline{c}$ and lines $\overline{d}$. So $M_0\models T^p_{m_0,n_0}$. To show $M_0\models T^c_{m_0,n_0}$, we need to show that any $m_0$ distinct points in $M_0$ lie on at least $n_0-1$ lines in $M_0$, and any $n_0$ distinct lines in $M_0$ intersect in at least $m_0-1$ points in $M_0$. We verify just the first assertion (the proof of the second is symmetric). Fix distinct points $x_1,\ldots,x_{m_0}\in M_0$. Then the $m$ points in $\overline{cx}$ simultaneously lie on $n-1$ distinct lines in $M$, $n-n_0$ of which are the lines in $\overline{d}$. The remaining $n_0-1$ lines are in $M_0$ by definition. Finally, to show $M_0\models T_{m_0,n_0}$, we fix a safe (with respect to $T_{m_0,n_0}$) diagram $\Delta(\overline{x};\overline{y})$ and $\overline{a}\in M_0$ such that $\widehat{\Delta}(\overline{a})$ holds. Let $\Delta_*(\overline{x},u_1,\ldots,u_{m-m_0},v_1,\ldots,v_{n-n_0};\overline{y})$ consist of the following formulas:
\begin{itemize}
\itemsep0em 
\parskip0em
\item all formulas in $\Delta(\overline{x};\overline{y})$,
\item $w\neq z$ for all distinct $w,z\in\overline{xuvy}$,
\item $P(u_i)$ and $\neg L(u_i)$ for all $i$,
\item $L(v_j)$ and $\neg P(v_j)$ for all $j$,
\item $I(u_i,v_j)$ for all $i,j$, 
\item $I(u_i,z)$ for all $i$ and all $z\in\overline{xy}$ such that $\Delta\models L(z)$, 
\item $I(z,v_j)$ for all $j$ and all $z\in\overline{xy}$ such that $\Delta\models P(z)$, and
\item $\neg I(w,z)$ for all $w,z\in\overline{xuvy}$ other than those for which $I(w,z)$ is specified above.
\end{itemize}
We claim that $\Delta_*(\overline{xuv};\overline{y})$ is safe with respect to $T_{m,n}$. It is clear that $\Delta_*$ defines an incidence structure on $\overline{xuvy}$. If $\Delta_*$ induces a copy of $K_{m,n}$, then it must contain at least $m_0$ points in $\overline{xy}$ and $n_0$ lines in $\overline{xy}$, which induces a copy of $K_{m_0,n_0}$ in $\Delta(\overline{x};\overline{y}$), contradicting that $\Delta$ is safe with respect to $T_{m_0,n_0}$. Finally, since $\Delta$ is safe with respect to $T_{m_0,n_0}$, any point in $\overline{y}$ is incident (via $\Delta$) to at most $n_0-1$ lines in $\overline{x}$, and thus incident (via $\Delta_*$) to at most $n-1$ lines in $\overline{xv}$. Similarly, any line in $\overline{y}$ is incident (via $\Delta_*$) to at most $m-1$ points in $\overline{xu}$.  Now, by construction we have $M\models\widehat{\Delta}_*(\overline{acd})$. So there is $\overline{b}\in M$ such that $M\models\widehat{\Delta}_*(\overline{acd};\overline{b})$. By construction $\overline{b}\in M_0$ and $M_0\models \Delta(\overline{a};\overline{b})$. 
\end{proof}

The remaining results in this section will involve detailed constructions of specific $K_{2,2}$-free incidence structures. So let us set some terminology and notation.  We refer to models of $T^p_{2,2}$ and $T^c_{2,2}$ as \emph{partial planes} and \emph{projective planes}, respectively (again, we do not include a ``non-degeneracy" condition). For clarity, we use $a,b,c$ for points and $\br,\bs,\bt$ for lines. Let $\Gamma$ be a partial plane. If distinct lines $\br$ and $\bs$ in $\Gamma$ contain a point $a\in\Gamma$, we say $a$ is the \emph{intersection point of $\{\br,\bs\}$ in $\Gamma$}. Similarly, if distinct points $a$ and $b$ in $\Gamma$ lie on a line $\br\in\Gamma$, we say $\br$ is the \emph{connecting line of $\{a,b\}$ in $\Gamma$}. If a pair of lines $\{\br,\bs\}$ (resp. a pair of points $\{a,b\}$) has no intersection point (resp. connecting line) in $\Gamma$, then we say that the pair is \emph{open in $\Gamma$}.

\begin{theorem}\label{thm:small}
If $m,n\geq 2$, then $T_{m,n}$ is not small.
\end{theorem}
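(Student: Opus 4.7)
By Lemma~\ref{lem:interpretable}, any model of $T_{m,n}$ interprets a model of $T_{2,2}$ using finitely many parameters, so smallness of $T_{m,n}$ would transfer (via types over those parameters) to smallness of $T_{2,2}$. Hence it suffices to show $T_{2,2}$ is not small, i.e., that $S_k(T_{2,2})$ is uncountable for some $k$.

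The plan is to produce, for each binary sequence $\eta\in 2^\omega$, a countable projective plane $M_\eta\models T^c_{2,2}$ containing a fixed finite partial plane $C$ enumerated by a tuple $\overline{a}$, such that $\overline{a}$ generates $M_\eta$ under $I$-closure and the pointed structures $(M_\eta,\overline{a})$ are pairwise non-isomorphic. Granting this, Proposition~\ref{prop:extend} (applied with $A=\emptyset$) embeds each $M_\eta$ into $\M$, sending $\overline{a}$ to some tuple $\overline{a}_\eta$; Proposition~\ref{prop:Iacl} identifies the image with $\acl(\overline{a}_\eta)$; and Corollary~\ref{cor:acltype} then yields $\tp(\overline{a}_\eta)\neq\tp(\overline{a}_{\eta'})$ for $\eta\neq\eta'$, so $S_k(T_{2,2})$ is uncountable for $k$ the length of $\overline{a}$.

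To build the $M_\eta$, I would proceed in stages, interleaving Hall's free completion of Proposition~\ref{prop:free-closure} with binary branching choices. Fix $C$ to be, e.g., four points in general position together with their six connecting lines, and fix in advance a canonical enumeration of potential ``moves'' (open pairs of lines requiring an intersection, open pairs of points requiring a connecting line) in any finite extension of $C$. At stage $k$, given a finite partial plane $\Gamma_k^\eta\supseteq C$ generated by $\overline{a}$, carry out the $k$-th canonical move in one of two ways depending on $\eta(k)$: either add a fresh new element (the usual free-completion move) or identify the required element with a canonically chosen, $K_{2,2}$-compatible ``fallback'' point or line already in $\Gamma_k^\eta$. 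Let $M_\eta$ be the free completion of $\bigcup_k\Gamma_k^\eta$ in the sense of Proposition~\ref{prop:free-closure}; this is a countable projective plane generated by $\overline{a}$.

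The main obstacle is to verify $(M_\eta,\overline{a})\not\cong(M_{\eta'},\overline{a})$ for $\eta\neq\eta'$. The canonical enumeration is chosen so that, in any $M_\eta$, the finite stages $A_t^\eta$ of iterated $I$-closure starting from $\overline{a}$ (as in the proof of Proposition~\ref{prop:partialqe}) visibly record the construction stages, and any isomorphism over $\overline{a}$ is forced to preserve this enumeration. At the first index $k$ where $\eta$ and $\eta'$ disagree, one construction inserts a fresh element while the other identifies with a canonically determined fallback; this produces a local structural difference (such as the existence or non-existence of a canonically described extra incidence) that must not be masked by later stages. The delicate issue, and the crux of the proof, is the choice of $C$ and of the fallback rule so that this difference genuinely persists in the final plane, via a rigidity argument about $I$-closures of generic finite partial planes. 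The proof then proceeds by induction on $k$, tracking a canonical invariant of $(M_\eta,\overline{a})$ that recovers the initial segment of $\eta$ from the $I$-closure stages alone.
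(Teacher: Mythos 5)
Your global strategy matches the paper's: reduce to $T_{2,2}$ via Lemma~\ref{lem:interpretable}, build continuum many countable projective planes generated by a fixed $4$-tuple, embed them into $\M$, and separate the types of the generators using Proposition~\ref{prop:extend}, Proposition~\ref{prop:Iacl}, and Corollary~\ref{cor:acltype}. That outer frame is sound. But the content of the theorem lies exactly in the step you yourself flag as ``the delicate issue'': you never specify the move enumeration or the fallback rule, and you give no argument that the branching choices survive into the completed planes. Two concrete problems. First, the ``identify with a $K_{2,2}$-compatible fallback'' move need not be available: in your starting configuration $C$ (four points in general position plus their six connecting lines) any two points already share a line, so declaring an existing point to be the intersection point of an open pair of lines always creates a copy of $K_{2,2}$; thus the very first branching as you describe it cannot be performed, and arranging that fallbacks exist when needed requires designing the configuration and the order of moves --- this is precisely the combinatorial work the paper does via its inductive invariants (that certain specified pairs of lines remain open at every level, so the two-way branching can always be executed). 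Second, the non-isomorphism claim (``a local structural difference that must not be masked by later stages'') is asserted, not proved; the appeal to ``a rigidity argument about $I$-closures of generic finite partial planes'' is exactly the missing theorem.

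For comparison, the paper closes this gap cleanly: every element of its finite stages is the value of an $H$-term in the four generators (each new element is introduced as the intersection point or connecting line of previously constructed elements), and the two branches at level $k$ differ in whether three specific term-definable points are collinear, a fact decided \emph{inside} the finite stage (one branch adds a single line through all three, the other adds two distinct connecting lines through two of the pairs), hence recorded as a term equation $H(u_1(\overline{x}),u_2(\overline{x}))=H(u_2(\overline{x}),u_3(\overline{x}))$ holding in one type and failing in the other, immune to anything added later. Your fresh-element-versus-identification dichotomy could probably be salvaged by the same observation (an identification forces $H(t_{\br},t_{\bs})$ to equal an earlier term value, while a fresh element keeps it distinct, and term values are preserved when the generated plane sits $I$-closed in $\M$), but as written the proposal stops short both of this argument and of a construction in which both branch moves are always executable. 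So there is a genuine gap, and it is precisely where the paper's proof does its real work.
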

\begin{proof}
By Lemma \ref{lem:interpretable}, we may assume $m=n=2$. We will show $S_4(T_{2,2})$ is uncountable.  We begin by constructing a partial plane $\Gamma_\emptyset$ as follows. Start with four distinct points $a_1,a_2,a_3,a_4$ and six distinct lines $\br_1,\br_2,\br_3,\br_4,\br_5,\br_6$, which are the connecting lines of the pairs $\{a_1,a_2\}$, $\{a_1,a_3\}$, $\{a_1,a_4\}$, $\{a_2,a_3\}$, $\{a_2,a_4\}$, $\{a_3,a_4\}$, respectively. Geometrically, we have a quadrangle consisting of four points together with six lines (the four sides and two diagonals).  There are three open pairs of lines: $\{\br_1,\br_6\}$, $\{\br_2,\br_5\}$, $\{\br_3,\br_4\}$, and we add distinct intersection points $b^0_1,b^0_2,b^0_3$ of the three pairs, respectively.  Since these three pairs of lines are mutually disjoint,  all pairs of points from the set $\{b^0_1,b^0_2,b^0_3\}$ are open. So we add distinct connecting lines $\bs^0_1,\bs^0_2,\bs^0_3$ of the pairs $\{b^0_1,b^0_2\}$, $\{b^0_1,b^0_3\}$, $\{b^0_2,b^0_3\}$, respectively. 

This finishes the construction of $\Gamma_\emptyset$. Explicitly, $P(\Gamma_\emptyset) = \{a_1,a_2,a_3,a_4, b^0_1,b^0_2,b^0_3\}$, $L(\Gamma_\emptyset) = \{\br_1,\br_2,\br_3,\br_4,\br_5,\br_6, \bs^0_1,\bs^0_2,\bs^0_3\}$, and $I(\Gamma_\emptyset)$ consists of the following incidences:
\[
\begin{tabular}{ccccccccc}
$(a_1,\br_1)$ & $(a_1,\br_2)$ &  $(a_1,\br_3)$ &  $(a_2,\br_4)$ &  $(a_2,\br_5)$ &  $(a_3,\br_6)$ & $(b^0_1,\bs^0_1)$ & $(b^0_1,\bs^0_2)$ & $(b^0_2,\bs^0_3)$ \\
$(a_2,\br_1)$ & $(a_3,\br_2)$ & $(a_4,\br_3)$ &   $(a_3,\br_4)$ &   $(a_4,\br_5)$ &  $(a_4,\br_6)$ &   $(b^0_2,\bs^0_1)$ &  $(b^0_3,\bs^0_2)$ & $(b^0_3,\bs^0_3)$ \\
$(b^0_1,\br_1)$ & $(b^0_2,\br_2)$ &  $(b^0_3,\br_3)$ &  $(b^0_3,\br_4)$ &  $(b^0_2,\br_5)$ &  $(b^0_1,\br_6)$. & & &
\end{tabular}
\]
By induction, we build a sequence $(\Gamma_\eta)_{\eta\in 2^{<\omega}}$ of partial planes such that, for all $\eta\in 2^{<\omega}$, 
\begin{enumerate}
\itemsep0em 
\parskip0em
\item if $\mu\preceq\eta$ then $\Gamma_\mu$ is an induced substructure of $\Gamma_\eta$;
\item $\{a_1,a_2,a_3,a_4\}$ generates $\Gamma_\eta$; 
\item if $k=|\eta|$ then $\Gamma_\eta$ contains three distinct lines $\bs^k_1,\bs^k_2,\bs^k_3$ such that each of the following six pairs of lines is open in $\Gamma_\eta$: $\{\br_1,\bs^k_3\}$, $\{\br_2,\bs^k_2\}$, $\{\br_3,\bs^k_1\}$, $\{\br_4,\bs^k_1\}$, $\{\br_5,\bs^k_2\}$, $\{\br_6,\bs^k_3\}$.
\end{enumerate}
By construction, $\Gamma_\emptyset$ satisfies these three conditions, and so we have the base case. For the induction step, fix $\eta\in 2^{<\omega}$ of length $k$, and suppose we have built $\Gamma_\eta$ satisfying conditions $(1)$, $(2)$, $(3)$ above. Fix $i\in\{0,1\}$. We first describe a two-step construction of $\Gamma_{\eta\widehat{~}(i)}$ by adding new points and lines to $\Gamma_\eta$. Along the way, we observe that each new point (resp. line) is added as an intersection point (resp. connecting line) of an open pair of lines (resp. points), and so the construction does not create a $K_{2,2}$. The first step of the construction will allow the induction to continue, and the second step distinguishes $\Gamma_{\eta\widehat{~}(0)}$ from $\Gamma_{\eta\widehat{~}(1)}$. After the two steps, we give the explicit definition of $\Gamma_{\eta\widehat{~}(i)}$. 

Step 1: Add distinct intersection points $b^{k+1}_1,b^{k+1}_2,b^{k+1}_3$ of the pairs $\{\br_1,\bs^k_3\}$, $\{\br_2,\bs^k_2\}$, $\{\br_3,\bs^k_1\}$, respectively, each of which is open by induction. Since these three pairs of lines are mutually disjoint, all pairs of points from the set $\{b^{k+1}_1,b^{k+1}_2,b^{k+1}_3\}$ are open. Now add distinct connecting lines $\bs^{k+1}_1,\bs^{k+1}_2,\bs^{k+1}_3$ of the pairs $\{b^{k+1}_1,b^{k+1}_2\}$, $\{b^{k+1}_1,b^{k+1}_3\}$, $\{b^{k+1}_2,b^{k+1}_3\}$, respectively. 

Step 2: Add distinct intersection points $c^{k+1}_1,c^{k+1}_2,c^{k+1}_3$ of the pairs $\{\br_4,\bs^k_1\}$, $\{\br_5,\bs^k_2\}$, $\{\br_6,\bs^k_3\}$, respectively, each of which has not yet been used and thus is still open by induction. Since these three pairs of lines are mutually disjoint, all pairs of points from the set $\{c^{k+1}_1,c^{k+1}_2,c^{k+1}_3\}$ are open. If $i=0$, add one final line $\bt^{k+1}$ containing all three points $c^{k+1}_1,c^{k+1}_2,c^{k+1}_3$. If $i=1$, add distinct intersection lines $\bt^{k+1}_1$ and $\bt^{k+1}_2$ of the pairs $\{c^{k+1}_1,c^{k+1}_2\}$ and $\{c^{k+1}_2,c^{k+1}_3\}$, respectively.

Explicitly, 
\begin{align*}
P(\Gamma_{\eta\widehat{~}(i)}) &= P(\Gamma_\eta)\cup \{b^{k+1}_1,b^{k+1}_2,b^{k+1}_3,c^{k+1}_1,c^{k+1}_2,c^{k+1}_3\},\\
L(\Gamma_{\eta\widehat{~}(i)}) &= L(\Gamma_\eta)\cup \{\bs^{k+1}_1,\bs^{k+1}_2,\bs^{k+1}_3\}\cup\begin{cases} \{\bt^{k+1}\} & \text{if $i=0$}\\ 
\{\bt_1^{k+1},\bt_2^{k+1}\} & \text{if $i=1$,}\end{cases}
\end{align*}
and $I(\Gamma_{\eta\widehat{~}(i)})$ is $I(\Gamma_\eta)$ together with the following incidences:
\[
\begin{tabular}{llllll}
$(b^{k+1}_1,\br_1)$ & $(b^{k+1}_2,\br_2)$ & $(b^{k+1}_3,\br_3)$ & $(c^{k+1}_1,\br_4)$ & $(c^{k+1}_2,\br_5)$ & $(c^{k+1}_3,\br_6)$ \\
$(b^{k+1}_3,\bs^k_1)$ & $(b^{k+1}_2,\bs^k_2)$ & $(b^{k+1}_1,\bs^k_3)$ & $(b^{k+1}_1,\bs^{k+1}_1)$ & $(b^{k+1}_1,\bs^{k+1}_2)$ &  $(b^{k+1}_2,\bs^{k+1}_3)$ \\
$(c^{k+1}_1,\bs^k_1)$ & $(c^{k+1}_2,\bs^k_2)$ & $(c^{k+1}_3,\bs^k_3)$ &  $(b^{k+1}_2,\bs^{k+1}_1)$ &  $(b^{k+1}_3,\bs^{k+1}_2)$ &  $(b^{k+1}_3,\bs^{k+1}_3)$
\end{tabular}
\]
and
\[
\left\{ 
\begin{tabular}{lllll}
$(c^{k+1}_1,\bt^{k+1})$ & $(c^{k+1}_2,\bt^{k+1})$ & $(c^{k+1}_3,\bt^{k+1})$ &  & if $i=0$,\\
$(c^{k+1}_1,\bt_1^{k+1})$ & $(c^{k+1}_2,\bt_1^{k+1})$ & $(c^{k+1}_2,\bt_2^{k+1})$ & $(c^{k+1}_3,\bt_2^{k+1})$  & if $i=1$.
\end{tabular}
\right.
\]

We now verify that $\Gamma_{\eta\widehat{~}(i)}$ satisfies conditions $(1)$, $(2)$, $(3)$. Condition $(1)$ is clear. For condition $(2)$, since each new point (resp. line) was introduced as an intersection point (resp. connecting line) of an existing pair of lines (resp. points), it follows by induction that $\{a_1,a_2,a_3,a_4\}$ generates $\Gamma_{\eta\widehat{~}(i)}$. Condition $(3)$ is easily checked by inspecting the list of new incidences above. 

This finishes the construction of the sequence $(\Gamma_\eta)_{\eta\in 2^{<\omega}}$. For any $\sigma\in 2^\omega$, $\Gamma_\sigma:=\bigcup_{n<\omega}\Gamma_{\sigma\restriction n}$ is a well-defined partial plane, which is generated by $\{a_1,a_2,a_3,a_4\}$. Therefore we have an $\cL$-embedding $f_\sigma\colon \Gamma_\sigma\to\M$. Given $\sigma\in 2^\omega$, define $p_\sigma=\tp(f_\sigma(a_1,a_2,a_3,a_4))\in S_4(T_{2,2})$. To finish the proof, it suffices to show that $\sigma\mapsto p_\sigma$ is injective.  To do this, we fix $k\geq 0$, $\eta\in 2^k$, and $\sigma,\tau\in 2^\omega$ such that $\eta\widehat{~}(0)\prec\sigma$ and $\eta\widehat{~}(1)\prec\tau$, and we show $p_\sigma\neq p_\tau$. Recall that, in $\M$, we have the definable function $H$ which sends any pair of lines (resp. points) to its unique intersection point (resp. connecting line). By construction, there are $H$-terms $u_i(x_1,x_2,x_3,x_4)$, for $i\in\{1,2,3\}$, such that $c^{k+1}_i=u_i(a_1,a_2,a_3,a_4)$ in both $\Gamma_\sigma$ and $\Gamma_\tau$. Then $p_\sigma\models H(u_1(\overline{x}),u_2(\overline{x}))=H(u_2(\overline{x}),u_3(\overline{x}))$, while $p_\tau\models H(u_1(\overline{x}),u_2(\overline{x}))\neq H(u_2(\overline{x}),u_3(\overline{x}))$.
\end{proof}

In direct analogy to the interpretation of division rings in Desarguesian projective planes described at the end of Section \ref{sec:props}, Hall~\cite{Hall} showed that the theory of projective planes with constants naming a quadrangle is bi-interpretable with the theory of planar ternary rings, a class of algebraic structures in a signature with a ternary function and two constants for $0$ and $1$. Just as before, the interpretations involve only existential formulas and put existentially closed projective planes in correspondence with existentially closed planar ternary rings.

Theorem~\ref{thm:small} shows that there are continuum-many theories of existentially closed projective planes with constants naming a quadrangle, which correspond to continuum-many theories of existentially closed planar ternary rings. In fact, all of these theories interpret $T_{2,2}$ and are interpretable (uniformly, depending on the choice of four parameters) in any sufficiently saturated model of $T_{2,2}$.

In contrast, it is a classical fact of projective geometry that in a Desarguesian plane, any two quadrangles (i.e., $4$-tuples of distinct points, no three of which are collinear) are conjugate by an automorphism. As a consequence, there are only finitely many quantifier-free $4$-types over the empty set in any Desarguesian plane, in the language with the function $H$ named.

In comparing this fact with Theorem~\ref{thm:small}, it is worth noting what Desargues's Theorem says about our construction. For each $k$, the triangles $(a_2,a_3,a_4;\br_4,\br_5,\br_6)$ and $(b^{k}_1,b^{k}_2,b^{k}_3;\bs^{k}_1,\bs^{k}_2,\bs^{k}_3)$ are in perspective, witnessed by the lines $\br_1$, $\br_2$, and $\br_3$ (which connect the pairs of points $(a_2,b^k_1)$, $(a_3,b^k_2)$, and $(a_4,b^k_3)$, respectively) meeting at the point $a_1$. As a consequence, Desargues's Theorem asserts that the points $c^{k+1}_1$, $c^{k+1}_2$, and $c^{k+1}_3$ (which are the intersections of the pairs of lines $(\br_4,\bs_1^k)$, $(\br_5,\bs_2^k)$, and $(\br_6,\bs_3^k)$, respectively) must be collinear, ruling out the independent choices that allowed us to find continuum-many $4$-types.

\begin{remark}\label{rem:lotsotypes}
Recall from Lemma \ref{lem:interpretable} that a model of $T_{2,2}$ is interpretable in $\M\models T_{m,n}$ using parameters isomorphic to $K_{m-2,n-2}$. Using this and the proof of Theorem~\ref{thm:small}, we make the following observations. Assume $m,n\geq 2$.
\begin{enumerate}
\item Algebraic closure in $T_{m,n}$ is not locally finite. Indeed, in the proof of Theorem~\ref{thm:small}, we have $\Gamma_\sigma\seq\acl(f_\sigma(a_1,a_2,a_3,a_4))$ for any $\sigma\in 2^{\omega}$.
\item $S_{m+n}(T_{m,n})$ is uncountable. Moreover, $T^c_{m,n}$ has continuum-many pairwise non-isomorphic models, each of which is generated by $m+n$ elements. These assertions follow by combining the continuum-many types in $S_4(T_{2,2})$ with the parameters $K_{m-2,n-2}$.
\item It is not hard to show that if $A\seq M\models T_{2,2}$  and $|A|\leq 3$, then $\acl(A)$ is finite. Hence, $k=4$ is minimal such that $S_k(T_{2,2})$ is infinite. This motivates the analogous questions for general $m,n$. If $A\seq M\models T_{m,n}$ has cardinality at most $m+n-1$, is it the case that $\acl(A)$ is finite? Is $m+n$ the minimal $k$ such that $S_k(T_{m,n})$ is infinite?
\end{enumerate}
\end{remark}

Having shown that $T_{m,n}$  has no countable saturated model when $m,n\geq 2$ (and in particular is not $\aleph_0$-categorical), the next natural question is whether $T_{m,n}$ has a prime model. In light of the fact that models of $T^c_{m,n}$ satisfy amalgamation (Proposition \ref{prop:amalgamation}), one might attempt to build a prime model via a Fra\"{i}ss\'{e}-style construction using finite models of $T^c_{m,n}$. Unfortunately, Proposition \ref{prop:amalgamation} does not guarantee that \emph{finite} models of $T^c_{m,n}$ satisfy amalgamation, since the proof uses free completion to extend a model of $T^p_{m,n}$ to a model of $T^c_{m,n}$. This motivates the following question.

\begin{question}\label{ques:Erdos}
Does every finite model of $T^p_{m,n}$ embed in a finite model of $T^c_{m,n}$?
\end{question}

Our use of the word ``embed" is in the model theoretic sense of induced substructures. Note that this question is a special case of the finite model property for $T_{m,n}$. Specifically, given a finite model $A\models T^p_m$, there is a sentence $\varphi_A\in T_{m,n}$ such that $M\models\varphi_A$ if and only if $M\models T^c_{m,n}$ and $A$ embeds in $M$ (namely, $\varphi_A$ is the conjunction of $\exists \overline{x}\Diag_A(\overline{x})$ with all axioms of $T^c_{m,n}$). We should also note that Question \ref{ques:Erdos} has a positive answer when $m=1$ or $n=1$ since, in these cases, the free completion of a finite model of $T^p_{m,1}$ or $T^p_{1,n}$ is finite. It is straightforward to show that $T_{m,1}$ and $T_{1,n}$ have the finite model property.

For $m,n\geq 2$, Question \ref{ques:Erdos} turns out to be a fairly difficult problem, even in the case $m=n=2$, where very little is known (see, e.g., \cite{MoWi}). Restated in the language of incidence geometry, the question for $m=n=2$ asks if every finite partial plane embeds in a finite projective plane. This problem was posed for matroids by Welsh in 1976 \cite[Chapter 12]{Welsh}, and for incidence geometries by  Erd\H{o}s in 1979 \cite[Problem 6.II]{erdos} (strictly speaking, Erd\H{o}s phrases the question in terms of non-induced embeddings, but it is an easy exercise to see that the two problems are equivalent \cite[Lemma 1]{MoWi}). The goal of the rest of this section is to show that the theory $T_{2,2}$ has a prime model if and only if this question about projective planes has a positive answer (and we expect this is also true for general $m,n\geq 2$; see Remark \ref{rem:primemodel}). The proof uses ``almost quantifier elimination" for $T_{m,n}$, along with the following combinatorial lemma.

\begin{lemma}\label{lem:notfree}
Suppose $A$ is a finite partial plane whose free completion $F(A)$ is infinite. Then there is a projective plane $B$ such that $A$ embeds in $B$ and generates $B$, but $B$ and $F(A)$ are not isomorphic over $A$.
\end{lemma}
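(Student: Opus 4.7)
The strategy is to build $B$ as the free completion of an enlargement $A^{*}$ of some stage $F_{k}(A)$ in which an extra line is inserted through three carefully chosen points of $F_{k}(A)$. Forcing these three points to become collinear in $B$ produces a projective plane in which the canonical homomorphism from $F(A)$ must collapse three previously distinct lines, which rules out isomorphism with $F(A)$ over $A$.

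The key combinatorial input is that, because $F(A) = \bigcup_{k \geq 0} F_{k}(A)$ is infinite, there is some stage $k$ and three elements of $F_{k}(A)$ of the same sort---WLOG three points $b_{1}, b_{2}, b_{3}$---that are \emph{pairwise open} in $F_{k}(A)$, meaning no line of $F_{k}(A)$ is incident to more than one of them. I expect this to be the main obstacle. The intuition is that if, for every $k$, both the collinearity graph on points of $F_{k}(A)$ and the dual concurrency graph on its lines had independence number at most two, then $F_{k}(A)$ would be forced to be so dense in incidences that the free completion must stabilize at a finite projective plane, contradicting the infinitude of $F(A)$.

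Given such a triple, let $\ell$ be a fresh line symbol and set $A^{*} := F_{k}(A) \cup \{\ell\}$, with $\ell$ incident to exactly $b_{1}, b_{2}, b_{3}$. Pairwise openness guarantees that every other line of $F_{k}(A)$ shares at most one of $b_{1}, b_{2}, b_{3}$ with $\ell$, so $A^{*}$ is $K_{2,2}$-free and hence a partial plane. Put $B := F(A^{*})$, a projective plane by Corollary~\ref{cor:free-closure}. The chain $A \subseteq F_{k}(A) \subseteq A^{*} \subseteq B$ exhibits $A$ as a substructure of $B$. Moreover $A$ generates $B$: the $I$-closure of $A$ inside $B$ recovers $F_{k}(A)$ stage by stage via the intersection and connecting-line operations of the projective plane $B$, then absorbs $\ell = H^{B}(b_{1}, b_{2})$, and thus contains all of $A^{*}$; since $A^{*}$ generates $B = F(A^{*})$, we get $\acl^{B}(A) = B$.

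To see $B \not\cong F(A)$ over $A$, suppose $\psi \colon F(A) \to B$ were an $\cL$-isomorphism fixing $A$ pointwise. The function $H$ is $\emptyset$-definable in any projective plane, so $\psi$ preserves $H$. By induction on stages of the free completion, each element of $F_{k}(A) \setminus A$ is an iterated $H$-term over $A$, and that term evaluates in $B$ to the same element as in $F(A)$ (by uniqueness of intersections and connecting lines in the projective plane $B$); hence $\psi$ fixes $F_{k}(A)$ pointwise and in particular $\psi(b_{i}) = b_{i}$ for each $i$. But in $F(A)$ the three lines $H^{F(A)}(b_{i}, b_{j})$ are pairwise distinct---they are added as distinct fresh connecting lines at some stage after $k$ and never conflated in later stages of the free completion, since no new incidence is ever added between elements already present---while in $B$ all three coincide with $\ell$. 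Hence $\psi$ would send three distinct elements of $F(A)$ to the single element $\ell \in B$, contradicting injectivity.
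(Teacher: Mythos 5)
Your overall architecture is sound and, in fact, it is essentially the paper's: the paper also manufactures three points ($c_1,c_2,c_3$) that are pairwise open at some finite stage of the free completion, and then either adds a single line through all three (yielding $B$) or lets the free completion give them three distinct connecting lines (in $F(A)$), so that the two completions are distinguished over $A$ by the collinearity of a triple generated from $A$ by $H$-terms. Your verification that $A^*$ is a partial plane, that $A$ generates $B=F(A^*)$, and the $H$-term rigidity argument forcing any isomorphism over $A$ to fix $F_k(A)$ pointwise (hence to send three distinct lines of $F(A)$ to the single line $\ell$) are all correct.

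The genuine gap is exactly the step you flag as "the main obstacle": the existence, at some stage $F_k(A)$, of three pairwise open points (or dually three pairwise open lines). This is not a side condition; it is the heart of the lemma, and the paper's proof is devoted almost entirely to establishing it. The paper first reduces to the case that $F(A)$ is non-degenerate (using the classification of degenerate planes, since otherwise the finite set $A$ would have finite $I$-closure), concludes that $F(A)$ has infinitely many points each lying on infinitely many lines, and then inductively selects seven lines, no three concurrent, appearing at strictly increasing stages; only after a further two-step construction (intersecting a late line with an earlier one to get a point $b$, connecting $b$ to the three intersection points $a_{i,j}$, and then intersecting the resulting lines $\bs_{i,j}$ with $\br_1,\br_2,\br_3$) does one obtain a triple of pairwise open points at a common stage. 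Your Ramsey-style heuristic --- that independence number at most two in the non-collinearity and non-concurrency graphs at every stage would force the completion to stabilize at a finite plane --- is not proved, and as stated it is not clearly true: bounded independence number does not obviously imply that every pair of points and every pair of lines is eventually closed at a bounded stage, and the problematic configurations are precisely the near-degenerate ones (e.g., almost all points concentrated on one or two lines) that the paper's non-degeneracy reduction and staging argument are designed to handle. Until you supply a proof of the existence of such a triple, the argument is incomplete.
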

\begin{proof}
We may assume that $F(A)$ is non-degenerate (i.e., it contains four points, no three of which are collinear). Indeed, the degenerate projective planes are completely classified (see~\cite{Hall}), and inspection shows that any finite subset  of a degenerate projective plane has finite $I$-closure. In any non-degenerate plane, there is a cardinal $\kappa$ such that every line contains $\kappa$ points and every point is on $\kappa$ lines. If $\kappa$ is finite, then the plane contains $(\kappa^2-\kappa+1)$ points and $(\kappa^2-\kappa+1)$ lines (and $\kappa-1$ is called the \emph{order} of the plane). Since $F(A)$ is non-degenerate and infinite, it must contain infinitely many points, each of which is incident with infinitely many lines. 

Now we inductively find lines $\br_1$, $\br_2$, $\dots$, $\br_7$ in $F(A)$ such that no three intersect in a single point and, if $k_i$ is minimal such that $\br_i\in F_{k_i}(A)$, then $k_1<\ldots<k_7$. Let $\br_1$ be any line in $F(A)$. Now suppose we have found $\br_i$ for $i\leq n<7$, and let $X = \{a_{i,j}\mid 1\leq i < j \leq n\}$, where $a_{i,j}\in F(A)$ is the intersection point of $\br_i$ and $\br_j$. Pick some point $d$ in $F(A)\backslash X$. Then each point in $X$ is incident with at most one line that is also incident to $d$. So, of the infinitely many lines incident to $d$, we may choose $\br_{n+1}$ to be not in $F_{k_n}(A)$ and not incident to any point in $X$.

Let $k=k_7$. In $F_k(A)$, $\br_7$ is incident with only two points, each of which lies on at most one $\br_i$ for $i\leq 6$. So there is some $i\in\{4,5,6\}$ such that the pair $\{\br_i,\br_7\}$ is open in $F_k(A)$. Let $b$ be the intersection point of $\{\br_i,\br_7\}$, added in $F_{k+1}(A)$. Note that the three pairs $\{a_{1,2},b\}$, $\{a_{1,3},b\}$, and $\{a_{2,3}, b\}$ are all open in $F_{k+1}(A)$, since none of $a_{1,2},a_{1,3},a_{2,3}$ is incident with $\br_i$ or $\br_7$. Moving to $F_{k+2}(A)$, we add distinct connecting lines $\bs_{1,2},\bs_{1,3},\bs_{2,3}$ of the pairs $\{a_{1,2},b\}$, $\{a_{1,3},b\}$, and $\{a_{2,3},b\}$, respectively. Now the pairs $\{\br_1,\bs_{2,3}\}$, $\{\br_2,\bs_{1,3}\}$, and $\{\br_3,\bs_{1,2}\}$ are open in $F_{k+2}(A)$. So, in $F_{k+3}(A)$, we add distinct intersection points $c_1,c_2,c_3$ of $\{\br_1,\bs_{2,3}\}$, $\{\br_2,\bs_{1,3}\}$, and $\{\br_3,\bs_{1,2}\}$, respectively. Since these three pairs of lines are mutually disjoint, all pairs of points from $\{c_1,c_2,c_3\}$ are open in $F_{k+3}(A)$. In the free completion $F(A)$, the three points $c_1$, $c_2$, and $c_3$ are not collinear, since distinct connecting lines for each pair are added in $F_{k+4}(A)$. But we can also extend $F_{k+3}(A)$ to a larger model $B_0$ of $T^p_{2,2}$ by adding a single new line $\bt$ incident to $c_1,c_2,c_3$. Note that $A$ still generates $B_0$, and thus generates the projective plane $F(B_0)$, which is not isomorphic to $F(A)$ over $A$. 
\end{proof}

\begin{remark}
In the proof of Lemma~\ref{lem:notfree}, the incidence structure $$\{a_{1,2},a_{1,3},a_{2,3},b,c_1,c_2,c_3;\br_1,\br_2,\br_3,\bs_{1,2},\bs_{1,3},\bs_{2,3},\bt\}\seq B$$ is a copy of the Fano plane. 
A finite partial plane is called a \emph{confined configuration} if each line is incident to at least three points and each point is incident to at least three lines. Hall observed in~\cite{Hall} that if $C\seq F(A)$ is a confined configuration, then $C\seq A$. The Fano plane is the smallest confined configuration, so it arises naturally as an obstruction to an isomorphism $B\cong F(A)$.
\end{remark}

\begin{theorem}\label{thm:primemodel}$~$
\begin{enumerate}[$(a)$]
\item If every finite model of $T^p_{m,n}$ embeds in a finite model of $T^c_{m,n}$, then $T_{m,n}$ has a prime model.
\item $T_{2,2}$ has a prime model if and only if every finite partial plane embeds in a finite projective plane. 
\end{enumerate}
\end{theorem}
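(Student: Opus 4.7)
My plan is to first characterize the isolated types of $T_{m,n}$ over $\emptyset$ using Proposition~\ref{prop:partialqe}, Proposition~\ref{prop:Iacl}, and Corollary~\ref{cor:acltype}, and then read off both parts of the theorem from the density (or non-density) of isolated types. Specifically, I will argue that $\tp(\overline{a})$ is isolated if and only if $\acl(\overline{a})$ is finite. For the easy direction, if $\acl(\overline{a}) = A$ is a finite model of $T^c_{m,n}$ (by Proposition~\ref{prop:Iacl}), then take $\psi(\overline{x}) := \exists\overline{y}\,\Delta_A(\overline{x};\overline{y})$ where $\overline{y}$ enumerates $A\setminus\overline{a}$. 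For any realization $\overline{a}'\overline{b}'$ of $\psi$ in $\M$, the substructure $\overline{a}'\overline{b}'$ is a finite model of $T^c_{m,n}$; it is automatically $I$-closed in $\M$ because $\M$ is $K_{m,n}$-free, so $\acl(\overline{a}') = \overline{a}'\overline{b}'$ with the natural isomorphism to $A$ sending $\overline{a}\mapsto\overline{a}'$, and Corollary~\ref{cor:acltype} gives $\tp(\overline{a}')=\tp(\overline{a})$.

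For part (a), given the hypothesis that every finite model of $T^p_{m,n}$ embeds in a finite model of $T^c_{m,n}$, I will show isolated types are dense in $S_n(T_{m,n})$. Starting from any consistent basic existential formula $\exists\overline{y}\,\Delta_{A_0}(\overline{x};\overline{y})$, the hypothesis embeds the finite partial plane $A_0$ in a finite projective plane $B$, and Proposition~\ref{prop:extend} embeds $B$ into $\M$; the resulting tuple $\overline{a}$ has $\acl(\overline{a}) \seq B$ finite and satisfies the given formula, so by the characterization $\tp(\overline{a})$ is isolated. Countability of $T_{m,n}$ then yields a countable atomic (prime) model.

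The $(\Leftarrow)$ direction of part (b) is immediate from (a) with $m=n=2$. For $(\Rightarrow)$, suppose $T_{2,2}$ has a prime model $M_0$, and let $A$ be a finite partial plane. Because $A$ embeds in $F(A) \models T^c_{2,2}$ and $F(A)$ embeds in $\M$ via Proposition~\ref{prop:extend}, the sentence $\exists\overline{x}\,\Delta_A(\overline{x})$ is in $T_{2,2}$ by completeness, so it holds in $M_0$; pick $\overline{a}\in M_0$ realizing $\Delta_A$. Atomicity of $M_0$ yields a basic existential isolating formula $\psi = \exists\overline{y}\,\Delta_{B_0}(\overline{x};\overline{y})$ for $\tp(\overline{a})$, with $B_0$ a finite partial plane containing a copy of $A$ and generated by $\overline{x}$. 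To conclude that $A$ embeds in a finite projective plane, I want $\acl(\overline{a})$ to be finite, which would make it itself a finite projective plane containing $A$. Suppose instead $F(B_0)$ is infinite; I would then apply Lemma~\ref{lem:notfree} to $B_0$ to obtain a projective plane $B^*$ generated by $B_0$ with $B^*\not\cong F(B_0)$, and embed both $F(B_0)$ and $B^*$ into $\M$ via Proposition~\ref{prop:extend} to produce two realizations of $\psi$ whose algebraic closures are copies of $F(B_0)$ and $B^*$ respectively.

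The main obstacle will be arranging that these two algebraic closures are non-isomorphic as abstract $\cL$-structures, since Corollary~\ref{cor:acltype} requires a non-existence of an isomorphism sending one tuple to the other, and Lemma~\ref{lem:notfree} only delivers non-isomorphism over $B_0$. The remark following Lemma~\ref{lem:notfree} shows that the lemma's construction places a Fano plane in $B^*$ strictly outside $B_0$, while by Hall's observation every confined configuration in $F(B_0)$ must lie inside $B_0$; so whenever $B_0$ itself contains no Fano plane, abstract non-isomorphism follows and the contradiction is complete. In the general case I would iterate or enlarge the lemma's construction, replacing the Fano plane by a larger confined configuration absent from the given $B_0$ (which is possible since $B_0$ is finite and there are confined configurations of arbitrarily many elements). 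Checking that this enhancement goes through uniformly for every finite $B_0$ is the key combinatorial subtlety of the argument.
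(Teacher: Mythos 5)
Your part (a), the easy half of your isolation criterion, and the right-to-left direction of (b) follow the paper's route (Proposition~\ref{prop:partialqe} plus Corollary~\ref{cor:acltype}) and are fine. The problem is the forward direction of (b), at exactly the point you flag and leave open: the ``key combinatorial subtlety'' is never carried out, so the proof is incomplete as written. Worse, the detour is based on a misreading of what Corollary~\ref{cor:acltype} demands. Let $\overline{a}_1,\overline{a}_2$ be the images in $\M$ of the generating tuple $\overline{x}$ under embeddings of $F(B_0)$ and of the alternative completion $B^*$; as you note, $\acl(\overline{a}_1)$ and $\acl(\overline{a}_2)$ are these images, and both realize $\psi$. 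If the two types were equal, Corollary~\ref{cor:acltype} would give an isomorphism $h\colon\acl(\overline{a}_1)\to\acl(\overline{a}_2)$ with $h(\overline{a}_1)=\overline{a}_2$, hence an isomorphism $g\colon F(B_0)\to B^*$ with $g(\overline{x})=\overline{x}$. But $\overline{x}$ generates $B_0$, and in any partial plane the connecting line of two points and the intersection point of two lines are unique; an easy induction on the stages of the $I$-closure of $\overline{x}$ in $B_0$ then shows that $g$ fixes $B_0$ pointwise. So non-isomorphism \emph{over} $B_0$ --- which is precisely the conclusion of Lemma~\ref{lem:notfree} --- already produces two realizations of $\psi$ with distinct types, contradicting isolation. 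This is exactly how the paper finishes; no abstract non-isomorphism, and hence no Fano-plane/confined-configuration bookkeeping, is needed.

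By contrast, the repair you sketch is a genuinely new combinatorial claim: for every finite partial plane $B_0$ with $F(B_0)$ infinite, you would need a completion generated by $B_0$ containing a confined configuration that does not embed in $B_0$, and it is not at all clear that the construction of Lemma~\ref{lem:notfree} can be ``iterated or enlarged'' to achieve this uniformly (nor would failure of this particular scheme even rule out abstract isomorphism by other means). Either prove such a strengthening, or --- much better --- drop the detour and use the observation above that tuple-preserving isomorphisms automatically fix the generated substructure $B_0$ pointwise.
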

\begin{proof}
Recall that a countable complete theory has a prime model if and only if isolated types are dense, i.e., every formula is contained in a complete type which is isolated by a single formula.

Part $(a)$. Suppose that every finite model of $T^p_{m,n}$ embeds in a finite model of $T^c_{m,n}$. By Proposition~\ref{prop:partialqe}, every formula is equivalent to a finite disjunction of basic existential formulas, so it suffices to consider a basic existential formula $\exists\overline{y}\,\Delta_q(\overline{x};\overline{y})$, where $q$ is a complete quantifier-free type in variables $\overline{xy}$, such that $q$ implies that $\overline{x}$ generates $\overline{xy}$. Let $B$ be the finite model of $T^p_{m,n}$ determined by a realization $\overline{ab}$ of $q$, and let  $C$ be an extension of $B$ to a finite model of $T^c_{m,n}$. Replacing $C$ with the $I$-closure of $\overline{a}$ in $C$, we may assume that $C$ is generated by $\overline{a}$. Let $\overline{c}$ enumerate $C\backslash B$, and let $q'$ be the complete quantifier-free type of $\overline{abc}$. Then, by Corollary~\ref{cor:acltype}, the basic existential formula $\exists\overline{yz}\,\Delta_{q'}(\overline{x};\overline{y},\overline{z})$ isolates a complete type containing $\exists\overline{y}\,\Delta_q(\overline{x};\overline{y})$.

Part $(b)$. By part $(a)$, we only need the forward direction. Suppose the isolated types are dense relative to $T_{2,2}$, and suppose for contradiction that the finite partial plane $B$ does not embed in a finite projective plane. Let $\overline{x}$ enumerate $B$, and let $\varphi(\overline{x})$ be a formula isolating a complete type containing $\Diag_B(\overline{x})$. By Proposition~\ref{prop:partialqe}, we may assume that $\varphi(\overline{x})$ is basic existential, describing the quantifier-free type of a finite partial plane $C$. 

Now $B$ embeds in $C$, since $\varphi(\overline{x})$ implies $\Diag_B(\overline{x})$, and hence $C$ also fails to embed in a finite projective plane. In particular, the free completion $F(C)$ is infinite. By Lemma~\ref{lem:notfree}, $C$ (and hence also $B$) has multiple completions which are not isomorphic over $C$ (and hence not isomorphic over $B$). This contradicts the fact that $\varphi(\overline{x})$ isolates a complete type.   
\end{proof}

\begin{remark}\label{rem:primemodel}
We conjecture that the converse of Theorem \ref{thm:primemodel}$(a)$ also holds, in particular because the analog of Lemma \ref{lem:notfree} for general $m,n\geq 2$ ought to be true. The construction in the proof of this lemma generalizes in a straightforward manner for finite models $A\models T^p_{m,n}$ such that $F(A)$ contains infinitely many points and every $m-1$ points are simultaneously incident to infinitely many lines. Thus the hope is that models of $T^{c}_{m,n}$ failing this condition are somehow ``degenerate" in a classifiable way analogous to the $(2,2)$ case.  A more careful analysis of the combinatorics of models of $T^p_{m,n}$ would likely resolve this issue and yield answers to the questions in Remark \ref{rem:lotsotypes}$(3)$. 
\end{remark}

\section{Classification of $T_{m,n}$}\label{sec:NSOP1}

In this section, we analyze model theoretic properties of $T_{m,n}$, and we characterize various notions of independence including Kim independence, dividing independence, and forking independence. For background and details on these notions, see~\cite{casanovas} and~\cite{KRKim}.

In Section \ref{subsec:NSOP1}, we show that $T_{m,n}$ is always NSOP$_1$, and we characterize Kim independence using a combinatorial ternary relation defined over arbitrary base sets. In Section \ref{subsec:simple}, we characterize dividing independence in $T_{m,n}$ and use this to conclude that $T_{m,n}$ is not simple when $m,n\geq 2$. We also give an explicit instance of a formula witnessing TP$_2$ in this case. In Section \ref{subsec:forking}, we use the free completion of models of $T^p_{m,n}$ to define a stationary independence relation for $T_{m,n}$, which we then use to show that forking and dividing coincide for complete types (though there are formulas which fork but do not divide when $m,n\geq 2$). In Section \ref{subsec:ei}, we use the independence theorem for Kim independence, along with its interaction with algebraic independence, to prove that $T_{m,n}$ has weak elimination of imaginaries. We also analyze thorn-forking. Finally, in Section \ref{subsec:stable}, we observe that $T_{m,1}$ and $T_{1,n}$ are $\omega$-stable of rank $m$ and $n$, respectively.

In this section, we use $T$ to denote an arbitrary complete theory, and $\M$ for a monster model.

\subsection{Kim independence and NSOP$_1$}\label{subsec:NSOP1}

We will demonstrate NSOP$_1$ using a characterization due to Chernikov, Kaplan, and Ramsey \cite{ArtemNick,KRKim} involving the existence of a ternary relation satisfying certain axioms (see the proof of Theorem \ref{thm:NSOP1} below). We first recall the following classical ternary relations, as well as several axioms of an arbitrary ternary relation. 

\begin{definition}
Suppose $C\subset \M$ and $p$ is a global type (i.e., $p\in S(\mathbb{M})$). We say that $p$ is $C$\emph{-invariant} when, for every formula $\varphi(x,y)$, if $b \equiv_{C} b'$, then $\varphi(x,b) \in p$ if and only if $\varphi(x,b') \in p$ (equivalently, $p$ is invariant under the action of $\text{Aut}(\mathbb{M}/C)$ on $S(\mathbb{M})$).  If $p$ is a global $C$-invariant type, then a \emph{Morley sequence in $p$ over $C$} is a sequence $(b_{i})_{i \in \omega}$ from $\mathbb{M}$ so that $b_{i} \models p|_{C(b_{j})_{j < i}}$.
\end{definition}

\begin{definition}
Given $C\subset\M\models T$, and $a,b\in\M$, we define the following ternary relations:
\begin{enumerate}
\item \emph{Algebraic independence}: $a\trt{a}_C b$ if and only if $\acl(Ca)\cap\acl(Cb)=\acl(C)$.
\item \emph{Dividing independence}: $a\trt{d}_C b$ if and only if for any $C$-indiscernible $(b_i)_{i\in\omega}$ in $\tp(b/C)$, there is $a'$ such that $a'b_i\equiv_C ab$ for all $i\in\omega$.
\item \emph{Forking independence}: $a\trt{f}_C b$ if and only if for any tuple $\widehat{b}$, there is $a'\equiv_{Cb}a $ such that $a\trt{d}_C b\widehat{b}$. 
\item \emph{Kim dividing independence}: If $C$ is a model, then $a\trt{Kd}_C b$ if and only if for any Morley sequence $(b_i)_{i\in \omega}$ in a global $C$-invariant type extending $\tp(b/C)$, there is $a'$ such that $a'b_i\equiv_C ab$ for all $i\in \omega$.
\item \emph{Kim forking independence}: If $C$ is a model, then $a\trt{K}_C b$ if and only if for any tuple $\widehat{b}$, there is $a'\equiv_{Cb}a $ such that $a\trt{Kd}_C b\widehat{b}$. 
\end{enumerate}
Recall  that, for any $C\subset\M$ and $a,b\in\M$, we have $a\trt{f}_C b\Rightarrow a\trt{d}_C b\Rightarrow a\trt{a}_C b$, and if $C$ is a model, we have $a\trt{d}_C b\Rightarrow a\trt{Kd}_C b\Rightarrow a\trt{a}_C b$.
\end{definition}

If $T$ is NSOP$_1$, then $\trt{K}$ and $\trt{Kd}$ coincide over models (see \cite[Proposition 3.18]{KRKim}), in which case we suppress the notation $\trt{Kd}$ in favor of $\trt{K}$.  

\begin{definition}
Let $\gind$ be a ternary relation on (small subsets of) $\M\models T$. We say $\gind$ is \emph{invariant} if it is invariant under automorphisms of $\M$. Consider the following axioms.
\begin{itemize}
\item \emph{Monotonicity}: If $A\gind_C B$, $A'\seq A$, and $B'\seq B$, then $A'\gind_C B'$.
\item \emph{Base monotonicity}: If $A\gind_C B$ and $C\seq D\seq\acl(BC)$ then $A\gind_D B$.
\item \emph{Symmetry}: If $A\gind_C B$ then $B\gind_C A$.
\item \emph{Transitivity}: If $D\seq C\seq B$, $A\gind_D C$, and $A\gind_C B$ then $A\gind_D B$.
\item  \emph{Finite character}: If $A\ngind_C B$ then $A\ngind_C B_0$ for some finite $B_0\seq B$. 
\item \emph{Local character}: For all $A$ there is some cardinal $\kappa$ such that for all $B$, there is $C\seq B$ with $|C|\leq\kappa$ and $A\gind_C B$. 
\item \emph{Existence}: $A\gind_C C$ for all $A$ and $C$.
\item \emph{Full existence}: For all $A,B,C$ there is $A'\equiv_C A$ such that $A'\gind_C B$.
\item \emph{Extension}: If $A\gind_C B$ and $\widehat{B}\supseteq BC$, then there is $A'\equiv_{BC} A$ such that $A'\gind_C \widehat{B}$.
\item \emph{Stationarity}: For all $A,A',B,C$, if $A'\equiv_C A$, $A\gind_C B$, and $A'\gind_C B$, then $A'\equiv_{BC} A$.
\item \emph{The independence theorem}: If $M\models T$ and $A\gind_M B$, $A'\gind_M C$, $B\gind_M C$, and $A\equiv_M A'$, then there is $A''$ such that $A''\equiv_{MB} A$, $A''\equiv_{MC}A'$, and $A''\gind_M BC$.
\item \emph{Strong finite character and witnessing}: If $M\models T$ and $A\ngind_M B$, then there are $a\in A$, $b\in B$, and $\varphi(x,b)\in\tp(a/BM)$, such that if $a'\models\varphi(x,b)$ then $a'\ngind_M b$, and if $(b_i)_{i\in\omega}$ is a Morley sequence in a global invariant type extending $\tp(b/M)$, then $\{\varphi(x,b_i) \mid i\in\omega\}$ is inconsistent.
\end{itemize}
\end{definition}

We will use the following axiomatic characterization of Kim independence in NSOP$_1$ theories.

\begin{theorem}\label{thm:characterizingkim}\textnormal{\cite[Theorem 9.1]{KRKim}}
Suppose $\gind$ is an invariant ternary relation on $\M$ which satisfies strong finite character and witnessing, the independence theorem, existence over models, monotonicity over models, and symmetry over models, then $T$ is NSOP$_1$ and $\gind = \trt{K}$.
\end{theorem}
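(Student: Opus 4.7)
The proof splits naturally into two components: deducing NSOP$_1$ from the existence of the relation, and identifying $\gind$ with Kim independence $\trt{K}$ over models.

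To establish NSOP$_1$, the plan is to argue by contradiction. Suppose SOP$_1$ is witnessed by a formula $\varphi(x,y)$ and a tree of parameters $(a_\eta)_{\eta\in 2^{<\omega}}$ over some model $M$, exhibiting consistency along branches and pairwise inconsistency of $\varphi(x,a_\nu)\wedge\varphi(x,a_\mu)$ whenever $\eta\widehat{~}(0)\preceq\nu$ and $\eta\widehat{~}(1)\preceq\mu$. After an application of tree Ramsey / modeling to pass to a strongly indiscernible tree, I would use existence (over $M$) to obtain a realization $c_0$ of $\varphi(x,a_0)$ with $c_0\gind_M a_0$ (where $a_0=a_{\langle 0\rangle}$), and separately a realization $c_1$ of $\varphi(x,a_\nu)$ for $\nu$ extending $\langle 1\rangle$, using strong tree-indiscernibility to arrange $c_0\equiv_M c_1$ and $a_0\gind_M a_\nu$. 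The independence theorem then amalgamates $c_0$ and $c_1$ over $M$ into a common realization $c^*$ of $\varphi(x,a_0)\wedge\varphi(x,a_\nu)$, contradicting the SOP$_1$ pattern. I expect this to be the main technical obstacle: one must choose the tree configuration carefully so that only existence, symmetry, monotonicity, and the independence theorem over models are invoked, avoiding extension or base monotonicity which are absent from the hypotheses.

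For $\gind\subseteq\trt{K}$, the contrapositive is direct. Assume $a\ngind_M b$. By the strong finite character clause, there is $\varphi(x,b)\in\tp(a/Mb)$ such that every realizer $a'$ of $\varphi(x,b)$ satisfies $a'\ngind_M b$; by the witnessing clause, for any Morley sequence $(b_i)_{i\in\omega}$ in a global $M$-invariant extension of $\tp(b/M)$, the set $\{\varphi(x,b_i)\mid i\in\omega\}$ is inconsistent. Hence $\varphi(x,b)$ Kim-divides over $M$, so $a\ntrt{K}_M b$.

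For the reverse containment $\trt{K}\subseteq\gind$, suppose $a\trt{K}_M b$. Using existence over $M$ and invariance, produce $a'\equiv_M a$ with $a'\gind_M b$. Build a Morley sequence $(b_i)_{i\in\omega}$ in a global $M$-invariant extension of $\tp(b/M)$, so in particular $b_i\gind_M b_{<i}$ by invariance and existence. By the Kim-independence hypothesis, some realization of $\tp(a/Mb)$ extends coherently along this sequence. Then apply the independence theorem iteratively: given $a_0\equiv_M a_1\equiv_M a$ with $a_0\gind_M b_0$, $a_1\gind_M b_1$, and $b_0\gind_M b_1$, amalgamate to find $a^*$ with $a^*\equiv_{Mb_0} a_0$, $a^*\equiv_{Mb_1} a_1$, and $a^*\gind_M b_0b_1$. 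Pushing this through, one obtains a single element realizing $\tp(a/Mb)$ and $\gind$-independent from $b$, so by invariance $a\gind_M b$. The main subtlety in this step, and the principal obstacle overall, is orchestrating the amalgamations so that every instance uses $\gind$ only over a model and only invokes the listed axioms; this is what makes the Kaplan--Ramsey criterion strictly stronger than the earlier Chernikov--Ramsey version and is what ultimately allows it to be applied to the $\gind$-relation we will construct for $T_{m,n}$.
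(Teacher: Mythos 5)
First, a point of comparison: the paper does not prove this statement at all. It is quoted from Kaplan and Ramsey \cite[Theorem 9.1]{KRKim} and used as a black box in the proof of Theorem~\ref{thm:NSOP1}, so there is no in-paper argument to measure your sketch against; it has to be judged on its own terms, and as written it has a genuine gap.

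The gap is that you never prove the inclusion $\gind \Rightarrow \trt{K}$. Your second paragraph, although labelled ``$\gind\subseteq\trt{K}$'', shows $a\ngind_M b\Rightarrow a\ntrt{K}_M b$ via strong finite character and witnessing; the contrapositive of this is $a\trt{K}_M b\Rightarrow a\gind_M b$. Your third paragraph, labelled as the reverse containment, again assumes $a\trt{K}_M b$ and tries to conclude $a\gind_M b$ --- the same inclusion a second time. So the direction that actually requires work is missing. That direction is handled (in Kaplan--Ramsey) as follows: assume $a\gind_M b$, take a Morley sequence $(b_i)_{i\in\omega}$ in a global $M$-invariant extension of $\tp(b/M)$ with $b_0=b$; each $b_i$ satisfies $b_i\trt{K}_M b_{<i}$ (its type over $Mb_{<i}$ extends to a global $M$-invariant type), hence $b_i\gind_M b_{<i}$ by the inclusion already proved; then iterate the independence theorem (with symmetry, monotonicity and invariance) to realize $\bigcup_{i}\tp(a/Mb)(x,b_i)$, and conclude via Kim's lemma and the coincidence of Kim-forking and Kim-dividing over models in NSOP$_1$ theories that $a\trt{K}_M b$. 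The iterated-amalgamation machinery in your final paragraph is exactly what this argument needs, but you aim it at the wrong conclusion; moreover its opening step --- producing $a'\equiv_M a$ with $a'\gind_M b$ from ``existence over $M$'' --- is full existence, which is not among the hypotheses (existence only gives $a\gind_M M$) and does not follow without an extension axiom. The NSOP$_1$ half of your sketch is in the right spirit (it is essentially the Chernikov--Ramsey tree argument), but note that strong finite character is also needed there to turn the failure of amalgamation into inconsistency along the tree, and your outline does not indicate where it enters.
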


We now define the ternary relation which will be used to prove that $T_{m,n}$ is NSOP$_1$. 

\begin{definition}\label{def:KI}
Define $\iind$ on subsets of $\mathbb{M}\models T_{m,n}$ by $A\iind_C B$ if and only if
 \[
\textstyle A\trt{a}_C B\text{ and, if $a\in \acl(AC)$ and $b\in\acl(BC)$ are incident, then $a\in\acl(C)$ or $b\in\acl(C)$.}\]
\end{definition}

Note that the knowledge of $\tp(A/C)$ and $\tp(B/C)$, together with $A\iind_C B$, does not uniquely determine $\tp(AB/C)$; that is, $\iind$ is not stationary. Specifically, the behavior of the elements of $\acl(ABC)\backslash (\acl(AC)\cup \acl(BC))$ is not specified. Throughout this section, we will implicitly use the observation that, given $A,B,C\subset\M\models T_{m,n}$, $A\iind_C B$ if and only if $\acl(AC)\iind_{\acl(C)}\acl(BC)$.

\begin{definition}
Given a ternary relation $\gind$ on $\M\models T$, a set $C\subset\M$, and a tuple $b\in\M$, an \emph{$\gind$-independent sequence in $\tp(b/C)$} is a sequence $(b_i)_{i\in\omega}$ such that, for all $i\in\omega$, $b_i\equiv_C b$ and $b_i\gind_C b_{<i}$.
\end{definition}

The following easy fact establishes strong finite character and witnessing for $\trt{a}$ (since any Morley sequence in a global invariant type is $\trt{a}$-independent). See also~\cite[Lemma 2.7]{KrRa}.

\begin{fact}\label{fact:SFCWalg}
Given $C\subset\M\models T$ and $a,b\in\M$, if $a\ntrt{a}_C b$ then there is an $\cL_C$-formula $\varphi(x,y)$ such that:
\begin{enumerate}[$(i)$]
\item $\varphi(x,b)\in\tp(a/bC)$,
\item if $a'\models\varphi(x,b)$, then $a'\ntrt{a}_C b$,
\item if $(b_i)_{i\in\omega}$ is an $\trt{a}$-independent sequence in $\tp(b/C)$, then $\{\varphi(x,b_i):i\in\omega\}$ is inconsistent.
\end{enumerate}
\end{fact}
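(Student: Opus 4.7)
My plan is to exploit $a \ntrt{a}_C b$ to produce a concrete algebraic witness, and then engineer an $\cL_C$-formula that detects and forces the existence of such a witness. First, I fix some $d\in(\acl(Ca)\cap\acl(Cb))\setminus\acl(C)$, and pick $\cL_C$-formulas $\psi(x,y)$ and $\chi(y,z)$ with $\M\models\psi(a,d)\wedge\chi(d,b)$ such that $\psi(a,\M)$ and $\chi(\M,b)$ are both finite; say $|\psi(a,\M)|\leq k$. The naive candidate $\exists y\,(\psi(x,y)\wedge\chi(y,z))$ does not suffice, since a witness $y$ could well land inside $\acl(C)$, which would break condition (iii).

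To guarantee that witnesses avoid $\acl(C)$, I modify $\chi$ as follows. Enumerate $\chi(\M,b)\cap\acl(C)=\{e_1,\ldots,e_s\}$ (possibly empty); by choice of $d$, none of the $e_j$ equals $d$. For each $j$, choose an $\cL_C$-formula $\eta_j(y)$ with finitely many solutions which holds of $e_j$; since $d\notin\acl(C)$, no such $\eta_j$ is satisfied by $d$. Setting $\eta(y):=\eta_1(y)\vee\cdots\vee\eta_s(y)$ and $\tilde\chi(y,z):=\chi(y,z)\wedge\neg\eta(y)$, we get $\tilde\chi(d,b)$ and $\tilde\chi(\M,b)\cap\acl(C)=\emptyset$. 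Since $\eta$ is $\cL_C$ and $\acl(C)$ is $\Aut(\M/C)$-invariant, we have $\tilde\chi(\M,b')\cap\acl(C)=\emptyset$ for every $b'\equiv_C b$.

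I then take
\[
\varphi(x,z)\;:=\;\exists y\,\bigl(\psi(x,y)\wedge\tilde\chi(y,z)\wedge\exists^{\leq k}y'\,\psi(x,y')\bigr).
\]
For (i), $d$ witnesses $\varphi(a,b)$. For (ii), any witness $y^*$ of $\varphi(a',b)$ lies in $\acl(Ca')\cap\acl(Cb)$ by the finiteness bounds, but not in $\acl(C)$ since $y^*\in\tilde\chi(\M,b)$; hence $a'\ntrt{a}_C b$. For (iii), assume $(b_i)_{i<\omega}$ is $\trt{a}$-independent in $\tp(b/C)$ and that some $a'$ realizes every $\varphi(x,b_i)$ with witness $d_i$. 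Each $d_i$ lies in $\psi(a',\M)$, a set of size at most $k$, so by pigeonhole some value $d^*$ equals $d_i$ for infinitely many $i$. Choosing $i<j$ in this infinite set, $\trt{a}$-independence yields $d^*\in\acl(Cb_i)\cap\acl(Cb_j)\subseteq\acl(Cb_{<j})\cap\acl(Cb_j)=\acl(C)$, contradicting $d^*\in\tilde\chi(\M,b_i)$.

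The only subtle step is the construction of $\tilde\chi$: forcing the witness out of $\acl(C)$ is essential for (iii), yet must be done using only parameters from $C$, which is why $\chi$ is replaced by an augmented formula built from the finite set $\chi(\M,b)\cap\acl(C)$ together with the fact that $d$ avoids every finite $\cL_C$-definable set.
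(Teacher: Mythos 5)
Your proof is correct, and it is essentially the standard argument that the paper leaves as an ``easy fact'' with a citation to [KrRa, Lemma 2.7]: pick a witness $d\in(\acl(Ca)\cap\acl(Cb))\setminus\acl(C)$, bound the solution sets of the algebraic formulas, use an auxiliary algebraic $\cL_C$-formula to force witnesses out of $\acl(C)$, and pigeonhole on the at most $k$ solutions of $\psi(x,\cdot)$ against $\trt{a}$-independence. This is the same technique the paper itself uses in its proof of Lemma~\ref{lem:SFCW} (the exact-count quantifiers and the formula $\chi(w)$ there play the role of your $\exists^{\leq k}y'\,\psi(x,y')$ clause and $\eta$), so no further comparison is needed.
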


We need a version of this fact to establish strong finite character and witnessing for $\iind$ in $T_{m,n}$. Condition $(iii)$ will be used later in Section~\ref{subsec:ei}.

\begin{lemma}\label{lem:SFCW}
Given $C\subset\M\models T_{m,n}$ and $a,b\in\M$, if $a\niind_C b$ then there is an $\cL_C$-formula $\varphi(x,y)$ such that:
\begin{enumerate}[$(i)$]
\item $\varphi(x,b)\in\tp(a/bC)$,
\item if $a'\models\varphi(x,b)$, then $a'\niind_C b$,
\item if $(b_i)_{i\in\omega}$ is an $\trt{a}$-independent sequence in $\tp(b/C)$ and $a'$ realizes $\{\varphi(x,b_i):i\in\omega\}$, then $a'\ntrt{a}_C (b_i)_{i\in\omega}$.
\item if $(b_i)_{i\in\omega}$ is an $\iind$-independent sequence in $\tp(b/C)$, then $\{\varphi(x,b_i):i\in\omega\}$ is inconsistent.
\end{enumerate}
\end{lemma}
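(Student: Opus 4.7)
The plan is to split on whether $a\ntrt{a}_C b$. In the algebraic case, I invoke Fact~\ref{fact:SFCWalg} directly: the formula it produces satisfies (i); satisfies (ii) because $\ntrt{a}$-dependence implies $\niind$-dependence; satisfies (iii) vacuously, since by clause (iii) of Fact~\ref{fact:SFCWalg} the relevant set is already inconsistent for $\trt{a}$-independent sequences; and satisfies (iv) because an $\iind$-independent sequence is \emph{a fortiori} $\trt{a}$-independent. So only the remaining case requires real work.

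Otherwise $a\trt{a}_C b$, and the hypothesis $a\niind_C b$ must be supplied by an incidence $I(\alpha,\beta)$ with $\alpha\in\acl(aC)\setminus\acl(C)$ a point and $\beta\in\acl(bC)\setminus\acl(C)$ a line (the symmetric case is analogous). I enlarge to a finite tuple $c\subseteq C$ so that the algebraic types $\tp(\alpha/aC)$ and $\tp(\beta/bC)$ are each isolated by a single formula over $ac$ and $bc$. Call these isolating formulas $\psi_a(z,x)$ and $\psi_b(w,y)$, with the $c$-parameters suppressed. By construction every solution of $\psi_a(z,a)$ is an $aC$-conjugate of $\alpha$, hence lies in $\acl(aC)\setminus\acl(C)$, and analogously for $\psi_b$. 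I then define
\[
\varphi(x,y):=\exists z\,\exists w\,\bigl[\psi_a(z,x)\wedge\psi_b(w,y)\wedge I(z,w)\bigr].
\]
Condition (i) is witnessed by $(\alpha,\beta)$. For (ii), any realization $a'$ of $\varphi(x,b)$ yields witnesses $\alpha'\in\acl(a'C)$, $\beta'\in\acl(bC)\setminus\acl(C)$ (the latter forced by the refined $\psi_b$), and $I(\alpha',\beta')$; in the generic case $\alpha'\notin\acl(C)$, this directly witnesses $a'\niind_C b$.

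For (iii) and (iv), suppose $a'$ realizes $\{\varphi(x,b_i)\}$ for $(b_i)$ an $\trt{a}$- (respectively $\iind$-)independent sequence in $\tp(b/C)$. For each $i$ pick witnesses $\alpha_i,\beta_i$. Since $\psi_a(z,a')$ has at most some fixed $k$ solutions, pigeonhole yields a single $\alpha$ serving as $\alpha_i$ for infinitely many $i$; after thinning, $\alpha$ is incident to each $\beta_i\in\acl(b_iC)\setminus\acl(C)$. The $\beta_i$ are pairwise distinct, since $\acl(b_iC)\cap\acl(b_jC)=\acl(C)$ by $\trt{a}$-independence and each $\beta_i$ lies outside $\acl(C)$. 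By $T^c_{m,n}$, any $n$ of these distinct lines intersect in exactly $m-1$ points of $\M$, one of which must be $\alpha$, so $\alpha\in\acl(b_{i_1}\cdots b_{i_n}C)$. This places $\alpha$ in $\acl(a'C)\cap\acl((b_i)_{i\in\omega}C)$, proving $a'\ntrt{a}_C(b_i)_{i\in\omega}$, which is (iii). For (iv), the same $\alpha$ lies in $\acl(b_{i_1}\cdots b_{i_n}C)\setminus\acl(C)$ and remains incident to $\beta_k\in\acl(b_kC)\setminus\acl(C)$ for any $k$ strictly greater than all the $i_j$; this cross-incidence contradicts the incidence clause of $b_k\iind_C b_{<k}$, forcing $\{\varphi(x,b_i)\}$ to be inconsistent.

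The main obstacle is the edge case in which the witness $\alpha'$ in (ii), or the pigeonholed $\alpha$ in (iii)/(iv), in fact lies in $\acl(C)$; then neither clause of $\niind$ is immediately visible and the collinearity argument for (iv) collapses. To handle this I would further refine $\psi_a$ by conjoining negations of finitely many algebraic $\cL_c$-formulas, arranging that no solution of $\psi_a(z,a)$ lies in $\acl(c)$, and then exploit that the solution set of $\psi_a(z,a)$ is a single $\Aut(\M/aC)$-orbit of $\alpha$ — one that the action of $\Aut(\M/C)$ moves through sets disjoint from $\acl(C)$ — to propagate this avoidance to the relevant realizations $a'$. Reconciling this algebraic bookkeeping with the $K_{m,n}$-free intersection geometry is the combinatorial heart of the lemma.
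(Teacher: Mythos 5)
Your overall strategy is the paper's: split on whether $a\ntrt{a}_C b$, and otherwise witness the cross-incidence by definable algebraic families on each side, then combine pigeonhole with $I$-closedness of algebraic closures for (iii) and (iv). But there is a genuine gap in the formula itself. Your $\varphi(x,y)=\exists z\,\exists w\,[\psi_a(z,x)\wedge\psi_b(w,y)\wedge I(z,w)]$ controls the witnesses only when $x$ is instantiated by $a$: for an arbitrary realization $a'$ of $\varphi(x,b)$ (or of $\{\varphi(x,b_i)\mid i\in\omega\}$), nothing forces the witness $\alpha'$ of $\psi_a(z,a')$ to lie in $\acl(a'C)$, since the instance $\psi_a(z,a')$ need not be algebraic. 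So (ii) does not follow even when $\alpha'\notin\acl(C)$, and the pigeonhole step in (iii)/(iv) (``$\psi_a(z,a')$ has at most some fixed $k$ solutions'') is unjustified --- that bound is known only for $a$ itself. Separately, even with a common witness $\alpha$, you must rule out $\alpha\in\acl(C)$, and incidence with lines outside $\acl(C)$ does not do this: a point of $\acl(C)$ can perfectly well lie on lines not in $\acl(C)$. Your proposed patch repairs neither problem: conjoining negations of algebraic $\cL_c$-formulas to $\psi_a$ constrains only the instance $\psi_a(z,a)$ (whose solutions, being $aC$-conjugates of $\alpha$, already avoid $\acl(C)$) and excludes only $\acl(c)$ for a finite $c\subseteq C$, not $\acl(C)$; and the orbit argument cannot be ``propagated'' to $a'$, because $a'$ is merely a realization of an $\cL_C$-formula and need not be conjugate to $a$ over $C$, let alone over $aC$.

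The fix is to build the guards into $\varphi$ itself, which is what the paper does: take $\varphi(x,y)$ to be $\exists^{!k}w\,\psi(w,x)\wedge\exists^{!\ell}z\,\theta(z,y)\wedge\exists w\,\exists z\,(\psi(w,x)\wedge\theta(z,y)\wedge I(w,z)\wedge\neg\chi(w))$, where $\theta(z,b)$ has exactly $\ell$ solutions, none in $\acl(C)$, and $\chi(w)$ is an algebraic $\cL_C$-formula covering every point of $\acl(C)$ incident to some solution of $\theta(z,b)$; this set is finite (of size at most $\ell(m-1)$) precisely because $\acl(C)$ is $I$-closed and no solution of $\theta(z,b)$ lies in $\acl(C)$. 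The cardinality conjunct makes the point-witness algebraic over $a'C$ for \emph{every} realization $a'$ and yields the uniform bound needed for pigeonhole, while $\neg\chi(w)$ forces that witness outside $\acl(C)$ for every $b_i\equiv_C b$ (transport $\chi$ along automorphisms over $C$). With these two guards added, your arguments for (ii), (iii), and (iv) go through essentially as you wrote them.
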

\begin{proof}
If $a\ntrt{a}_C b$, then we have $(i)$, $(ii)$, and $(iv)$ by Fact~\ref{fact:SFCWalg} (since $\ntrt{a}$ implies $\niind$, and $\iind$-independent sequences are $\trt{a}$-independent), and $(iii)$ holds trivially. We may therefore assume that there is an incidence between some $u\in\acl(Ca)\backslash\acl(C)$ and $v\in\acl(Cb)\backslash\acl(C)$. Without loss of generality, $u$ is a point and $v$ is a line (the other case is symmetric). Fix $\cL_C$-formulas $\psi(w,x)$ and $\theta(z,y)$, and integers $k,\ell$, such that $u$ is one of exactly $k$ solutions to $\psi(w,a)$ and $v$ is one of exactly $\ell$ solutions to $\theta(z,b)$. We may assume $\theta(z,b)$ has no solutions in $\acl(C)$, and that $\theta(z,y)$ includes $L(z)$ as a conjunct. Therefore, if $\theta(z,b)$ holds, then $z$ is a line and there are most $m-1$ points in $\acl(C)$ incident to $z$. So we may fix an algebraic $\cL_C$-formula $\chi(w)$ such that any point in $\acl(C)$, which lies on a line satisfying $\theta(z,b)$, is a solution to $\chi(w)$. Now let $\varphi(x,y)$ be the $\cL_C$-formula:
\[
\exists^{!k}w\,\psi(w,x)\land\exists^{!\ell}z\,\theta(z,y)\land\exists w\exists z(\psi(w,x)\land \theta(z,y)\land I(w,z)\land \neg \chi(w)).
\]
 Then $\varphi(x,b)\in\tp(a/Cb)$ by construction. Suppose $a'\models\varphi(x,b)$, witnessed by $w,z$. Then $w\in\acl(Ca')$, $z\in\acl(Cb)$, and $I(w,z)$. We also have $z\not\in\acl(C)$ since $\theta(z,b)$ holds, and $w\not\in\acl(C)$, since $I(w,z)\land \neg\chi(w)\land \theta(z,b)$ holds. 
 
For $(iii)$, suppose $(b_i)_{i\in\omega}$ is an $\trt{a}$-independent sequence in $\tp(b/C)$, and suppose $a'$ realizes $\{\varphi(x,b_i)\mid i\in\omega\}$. After translating by an automorphism over $C$, we may assume $b_0=b$.  For each $i\in\omega$, $\varphi(a',b_i)$ is witnessed by some $w_i$ realizing $\psi(w,a')$ and $z_i$ realizing $\theta(z,b_i)$. Moreover, $\psi(w,a')$ has at most $k$ solutions. By pigeonhole, we may pass to a subsequence and assume $w_i=w_j=:w$ for all $i,j\in\omega$. For any $i\in\omega$, $b_i\equiv_C b$ and so $\theta(z,b_i)$ has exactly $\ell$ solutions, none of which lie in $\acl(C)$. In particular $z_i\in\acl(Cb_i)\backslash\acl(C)$ for all $i\in\omega$. Since $(b_i)_{i\in\omega}$ is $\trt{a}$-independent, it follows that $z_i\neq z_j$ for all distinct $i,j\in\omega$. On the other hand, $w$ is incident to $z_i$ for all $i\in\omega$. In particular, $w$ is one of $m-1$ points which simultaneously lie on $z_0,\ldots,z_{n-1}$, and so $w\in\acl(Cb_0\ldots b_{n-1})$. But also $w\in \acl(Ca')$ and $w\notin \acl(C)$, since $\theta(z_0,b)$, $I(w,z_0)$, and $\lnot \chi(w)$ hold. So $w$ witnesses $a'\ntrt{a}_C b_{<n}$.  

For $(iv)$, we may continue with the further assumption that $(b_i)_{i\in\omega}$ is an $\iind$-independent sequence. But we have  $z_n\in\acl(Cb_n)\backslash\acl(C)$, $w\in \acl(Cb_{<n})$, and $I(w,z_n)$, which contradicts $b_n\iind_C b_{<n}$.
\end{proof}

\begin{remark}\label{rem:SFCW}
It follows from Lemma \ref{lem:SFCW} that $\iind$ satisfies strong finite character and witnessing in $T_{m,n}$. This is because, if $M\models T_{m,n}$, then any Morley sequence in a global invariant type extending $\tp(b/M)$ is $\iind$-independent (e.g.\ by Corollary \ref{cor:div} below, and since if $\tp(a/Mb)$ extends to a global invariant type over $M$, then $a\trt{d}_M b$). Alternatively, as discussed in \cite{KRKim}, when using the witnessing axiom to demonstrate NSOP$_1$, it is enough to assume that the global invariant type is finitely satisfiable in $M$. Moreover, if $\tp(a/Mb)$ is finitely satisfiable in $M$, then it is easy to directly see $a\iind_M b$, and so this version of witnessing follows immediately from the previous lemma (without needing Corollary \ref{cor:div}).
\end{remark}

\begin{lemma}\label{lem:IT}
Suppose $D = \acl(D) \subset \M\models T_{m,n}$ and $a,a',b,c\in\M$ are such that $a\iind_D b$, $a'\iind_D c$, $b\trt{a}_D c$, and $a\equiv_D a'$. Then there is $a''$ such that $a''\equiv_{Db} a$, $a''\equiv_{Dc} a'$, and $a''\iind_D bc$. 
\end{lemma}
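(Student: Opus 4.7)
The plan is to construct an amalgamated $\cL$-structure $\tilde{Z}\models T^p_{m,n}$ out of $W := \acl(Dab)$, $W' := \acl(Da'c)$, and $E := \acl(Dbc)$, and then apply Proposition~\ref{prop:extend} to embed $\tilde{Z}$ into $\M$ over $E$; the desired $a''$ will be the image of $a$ (identified with $a'$) under this embedding. First, by Corollary~\ref{cor:acltype} applied to $a\equiv_D a'$, I will fix an $\cL$-isomorphism $\phi\colon A\to A'$ fixing $D$ pointwise with $\phi(a)=a'$, where $A := \acl(Da)$ and $A' := \acl(Da')$; let $B := \acl(Db)$ and $C := \acl(Dc)$, noting $B\cap C = D$ from $b\trt{a}_D c$. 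Then $\tilde{Z}$ is the disjoint union of copies of $W$, $W'$, $E$, modulo the identifications $A\subseteq W\leftrightarrow A'\subseteq W'$ via $\phi$, $B\subseteq W\leftrightarrow B\subseteq E$ via identity, and $C\subseteq W'\leftrightarrow C\subseteq E$ via identity, so the overlaps become $W\cap W' = A$, $W\cap E = B$, $W'\cap E = C$, and $W\cap W'\cap E = D$. Declare $I(x,y)$ to hold in $\tilde{Z}$ iff both $x,y$ lie in one of $W, W', E$ with $I(x,y)$ holding there. Consistency of this definition is guaranteed by $a\iind_D b$ and $a'\iind_D c$, which preclude any incidence inside $W$ between $A\setminus D$ and $B\setminus D$, and inside $W'$ between $A'\setminus D$ and $C\setminus D$.

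The hard part will be verifying $\tilde{Z}\models T^p_{m,n}$. Suppose for contradiction that a copy of $K_{m,n}$ lies in $\tilde{Z}$. Any element in a ``pure'' region such as $W\setminus(A\cup B)$ has all its incidences confined to $W$ (since it is not in any overlap), forcing the entire $K_{m,n}$ into $W$ and contradicting $W\models T^p_{m,n}$; symmetric arguments exclude elements in $W'\setminus(A'\cup C)$ and in $E\setminus(B\cup C)$. So $K_{m,n}\subseteq A\cup B\cup C$. The cross-incidence prohibitions (no incidence between $A\setminus D$ and $B\setminus D$, nor between $A\setminus D$ and $C\setminus D$) then force: if any element of the $K_{m,n}$ lies in $A\setminus D$, then all incident elements must lie in $A$, and either the whole $K_{m,n}$ is in $A$ (contradicting $A\models T^p_{m,n}$) or the $n$ lines are confined to $D$. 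In the latter case, the $T^c_{m,n}$-structure on $A$ and $D$ yields that any $n$ lines in $D$ have exactly $m-1$ common points in $A$ and they all lie in $D$, contradicting the presence of an $A\setminus D$-element; the dual argument rules out lines in $A\setminus D$. This leaves $K_{m,n}\subseteq B\cup C\subseteq E\subseteq\M$, contradicting the $K_{m,n}$-freeness of $\M$.

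Granting $\tilde{Z}\models T^p_{m,n}$, Proposition~\ref{prop:extend} will yield an $\cL$-embedding $h\colon\tilde{Z}\to\M$ fixing $E$ pointwise; set $a'' := h(a) = h(a')$. The restriction $h|_W\colon W\to\M$ is an $\cL$-embedding fixing $Db$ and sending $a\mapsto a''$, so Corollary~\ref{cor:extend} gives $a\equiv_{Db} a''$; likewise $h|_{W'}$ gives $a'\equiv_{Dc} a''$. For the independence clause, a similar analysis (using the cross-incidence restrictions to rule out any new elements entering the $I$-closure of $Da$) shows $\acl_{\tilde{Z}}(Da) = A$, so $\acl_\M(Da'') = h(A)$; since $A\cap E = D$ in $\tilde{Z}$ and no incidences exist between $A\setminus D$ and $E\setminus D$, these properties pass through $h$, delivering $a''\iind_D bc$. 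The main obstacle throughout is the $K_{m,n}$-freeness of $\tilde{Z}$, specifically the counting step for mixed configurations, which hinges on combining the $T^c_{m,n}$-structure of $A$, $B$, $C$, $D$ with the cross-incidence constraints coming from the two $\iind$-hypotheses.
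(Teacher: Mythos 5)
Your construction is the same as the paper's: form the free amalgam of $W=\acl(Dab)$, $W'=\acl(Da'c)$, $E=\acl(Dbc)$ over the identifications of $A\cong A'$, $B$, $C$ (with all intersections equal to $D$), verify it is a model of $T^p_{m,n}$, embed it into $\M$ over $E$ by Proposition~\ref{prop:extend}, and read off the type and independence conclusions via Corollary~\ref{cor:extend}. The gap is in the one step that is actually the heart of the proof: the claim that an element of the alleged copy of $K_{m,n}$ lying in a pure region, say a point $p\in W\setminus(A\cup B)$, ``forces the entire $K_{m,n}$ into $W$.'' What follows from $p$ not lying in any overlap is only that the $n$ lines of the copy lie in $W$; it does not follow that the other $m-1$ points do. If those lines lie in the overlaps $A\cup B$ (for instance split between $A\setminus D$ and $B\setminus D$), a second point of the copy can a priori sit in $W'$ or $E$, with its incidences realized inside those pieces. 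The same problem occurs for a pure element of $E\setminus(B\cup C)$ with lines split between $B\setminus D$ and $C\setminus D$ and a completing point in $A\setminus D$. Ruling out these mixed configurations is exactly where the hypotheses $a\iind_D b$ and $a'\iind_D c$, together with the fact that $A,B$ (resp.\ $A',C$; $B,C$) are algebraically closed in $W$ (resp.\ $W'$; $E$), must be invoked: one shows the lines incident to the pure element cannot all lie in a single overlap (else the pure element would be algebraic over it and hence inside it), so they split between the two overlaps of that piece; then any element of the copy outside that piece must lie in the third corner and be incident to elements of two different corners, contradicting one of the two $\iind$-hypotheses. This is the case analysis occupying the second and third paragraphs of the paper's proof, and your write-up replaces it with a non sequitur, even while your closing sentence acknowledges that the counting step hinges on the cross-incidence constraints.

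By contrast, your treatment of the case where the copy is contained in $A\cup B\cup C$ is essentially correct, and mildly different in flavor from the paper's: you use that $D\models T^c_{m,n}$ to count common points/lines of a configuration confined to $D$ and reach a $K_{m,n}$ inside $A\subseteq\M$, whereas the paper uses $b\trt{a}_D c$ to push the offending element into $D$. That part, and the final steps (types over $Db$ and $Dc$ via Corollary~\ref{cor:extend}, and $a''\iind_D bc$ from $A\cap E=D$ plus the absence of incidences between $A\setminus D$ and $E\setminus D$ in the amalgam), are fine; the proof is incomplete only because the mixed case above is never actually handled.
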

\begin{proof}
Let $X_{ab} = \acl(Dab)$, $X_{ac} = \acl(Da'c)$, and $X_{bc} = \acl(Dbc)$. Since $a\equiv_D a'$, $\acl(Da)\seq X_{ab}$ and $\acl(Da')\seq X_{ac}$ are isomorphic via a map sending $a$ to $a'$ and fixing $D$ pointwise. Let $X_{abc}$ be the incidence structure formed by taking the disjoint union of $X_{ab}$, $X_{ac}$, and $X_{bc}$ (adding no new incidences), and then identifying the copies of $X_b = \acl(Db)$ in $X_{ab}$ and $X_{bc}$, identifying the copies of $X_c = \acl(Dc)$ in $X_{ac}$ and $X_{bc}$, and identifying the isomorphic copies of $\acl(Da)$ in $X_{ab}$ and $\acl(Da')$ in $X_{ac}$ (call the identified substructure $X_a$). Note that our independence assumptions imply that $X_a$, $X_b$, and $X_c$ are pairwise disjoint over $D = \acl(D)$, and the identifications described above agree on their common intersection $D$. By construction, we have $X_r \trt{a}_D X_{uv}$ whenever $\{r,u,v\} = \{a,b,c\}$.

Then $X_{abc}\models T^p_{m,n}$. Indeed, suppose for contradiction that there is a copy $\overline{wz}$ of $K_{m,n}$ in $X_{abc}$, where $\overline{w}=(w_1,\ldots,w_m)$ is a tuple of points, $\overline{z}=(z_1,\ldots,z_n)$ is a tuple of lines, and $I(w_i,z_j)$ for all $i,j$. First, we claim that $\overline{wz}$ is not contained in $X_a\cup X_b\cup X_c$. Indeed, suppose otherwise. Then, since $\overline{wz}$ is not contained in $X_a$, there is an element of $\overline{wz}$ in $X_b\backslash D$ or $X_c\backslash D$. Without loss of generality, suppose $X_b\backslash D$ contains a point $w_i$ in $\overline{w}$ (the other cases are symmetric). Since $a\iind_D b$, we must have $\overline{z}$ contained in $X_b\cup X_c$. If $\overline{z}$ is entirely in $X_c$ then, viewing $w_i$ and $\overline{z}$ in the structure $X_{bc}$, we have $w_i$ in $(\acl(Db)\cap\acl(Dc))\backslash D$, which contradicts $b\trt{a}_D c$. So $X_b\backslash D$ must also contain a line $z_j$ in $\overline{z}$. Again, $a\iind_D b$ then implies $X_b\cup X_c$ contains $\overline{w}$, and thus contains $\overline{wz}$, which is a contradiction. This establishes the claim. 

Hence there is a pair $(u,v)$ from $\{a,b,c\}$ such that $X_{uv}\backslash(X_u\cup X_v)$ contains an element of $\overline{wz}$. Without loss of generality, we assume this is a point $w_i$ in $\overline{w}$. Since no new incidences were introduced in the construction of $X_{abc}$, we must have $\overline{z}$ contained in $X_{uv}$. Note that there is no line $z_j$ in $X_{uv}\backslash(X_u\cup X_v)$. Otherwise, by the same argument, we would also have $\overline{w}$, and thus all of $\overline{wz}$, contained in $X_{uv}$. So $\overline{z}$ is contained in $X_u\cup X_v$. But we also cannot have all of $\overline{z}$ contained either $X_u$ or $X_v$, since then, viewing $w_i$ and $\overline{z}$ in the structure $X_{uv}$, we would have $w_i\in \acl(\overline{z})$, but $w_i\not\in X_u\cup X_v$. It follows that there are distinct lines $z_j\in X_u\backslash D$ and $z_k\in X_v\backslash D$. 

Let $r$ be the unique element of $\{a,b,c\}\backslash \{u,v\}$. Since $\overline{wz}$ is not contained in $X_{uv}$, there is a point $w_\ell\not\in X_{uv}$. Again, since no new incidences were introduced in the construction of $X_{abc}$, and $w_\ell$ lies on both $z_j$ and $z_k$, we must have $w_\ell\in X_r\backslash D$. But then there is an incidence between $X_r\backslash D$ and $X_u\backslash D$, and between $X_r\backslash D$ and $X_v\backslash D$. Since one of the pairs $\{r,u\}$ and $\{r,v\}$ must be $\{a,b\}$ or $\{a,c\}$, we have contradicted one of $a\iind_D b$ or $a'\iind_D c$.

Having shown that $X_{abc}\models T^p_{m,n}$, we can embed $X_{abc}$ into $\M$ over $X_{bc}$ by Proposition \ref{prop:extend}. Letting $a''$ be the image of the tuple $a$ in $X_a$ under this embedding, we have $a''\equiv_{Db} a$ and $a''\equiv_{Dc} a'$ by Corollary \ref{cor:extend}. And $a''\iind_D bc$, since there are no incidences between $X_a\backslash D$ and $X_{b}\backslash D$, no incidences between $X_a\backslash D$ and $X_{c}\backslash D$, and no additional incidences added between $X_a\backslash D$ and $X_{bc}$ when we built $X_{abc}$.
\end{proof}

\begin{theorem}\label{thm:NSOP1}
$T_{m,n}$ is NSOP$_1$ and $\iind = \trt{K}=\trt{Kd}$ over models.
\end{theorem}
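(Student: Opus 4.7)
The plan is to apply the Kaplan--Ramsey criterion (Theorem~\ref{thm:characterizingkim}) to the relation $\iind$. This reduces the result to verifying that $\iind$ satisfies invariance, monotonicity over models, symmetry over models, existence over models, strong finite character and witnessing, and the independence theorem. Once these are in hand, Theorem~\ref{thm:characterizingkim} yields both NSOP$_1$ and $\iind=\trt{K}$ over models, and the equality $\trt{K}=\trt{Kd}$ over models is then automatic in any NSOP$_1$ theory by \cite[Proposition 3.18]{KRKim}.

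The first four axioms are either immediate or essentially cost-free. Invariance, monotonicity, and symmetry follow directly from Definition~\ref{def:KI}, since algebraic closure and the incidence relation are preserved by automorphisms, the definition is manifestly symmetric in $A$ and $B$, and shrinking $A$ or $B$ only shrinks their algebraic closures. Existence $A\iind_M M$ over a model $M$ holds because $\acl(MA)\cap \acl(M)=\acl(M)$ and any element of $\acl(M)$ trivially satisfies the second clause of the definition.

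The remaining two axioms are already essentially proved in the preceding lemmas. Strong finite character and witnessing for $\iind$ is exactly the content of Lemma~\ref{lem:SFCW}(i), (ii), (iv), together with the observation recorded in Remark~\ref{rem:SFCW} that any Morley sequence in a global $M$-invariant type is $\iind$-independent (alternatively, as noted there, witnessing need only be checked for types finitely satisfiable in $M$, in which case $\iind$-independence is direct from the definition). For the independence theorem, suppose $M\models T_{m,n}$, $A\iind_M B$, $A'\iind_M C$, $B\iind_M C$, and $A\equiv_M A'$. Since $\iind$ implies $\trt{a}$, we have $B\trt{a}_M C$, so the hypotheses of Lemma~\ref{lem:IT} are met with $D=M=\acl(M)$, and the lemma supplies the required $A''$ with $A''\equiv_{MB}A$, $A''\equiv_{MC}A'$, and $A''\iind_M BC$.

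The only genuinely substantive step is the one already carried out in Lemma~\ref{lem:IT}: the disjoint amalgamation $X_{abc}$ of $\acl(MA)$, $\acl(MA')$, $\acl(MB)$, and $\acl(MC)$ must be shown to be $K_{m,n}$-free, and this is where the interaction between algebraic closure and the incidence relation enforced by $\iind$ does the real work. Similarly, the witnessing formula in Lemma~\ref{lem:SFCW} is the main technical input for strong finite character. Having delegated these two combinatorial obstacles to the previous lemmas, the present theorem reduces to assembling the pieces and invoking Theorem~\ref{thm:characterizingkim}, followed by a citation of \cite[Proposition 3.18]{KRKim} to identify $\trt{K}$ with $\trt{Kd}$ over models.
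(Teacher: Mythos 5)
Your proposal is correct and follows essentially the same route as the paper: verify the hypotheses of Theorem~\ref{thm:characterizingkim} for $\iind$, using Lemma~\ref{lem:SFCW} with Remark~\ref{rem:SFCW} for strong finite character and witnessing, Lemma~\ref{lem:IT} (with $D=M=\acl(M)$, noting $\iind$ implies $\trt{a}$) for the independence theorem, and the definition for the remaining axioms, then cite \cite[Proposition 3.18]{KRKim} for $\trt{K}=\trt{Kd}$. The only difference is that you spell out the easy verifications in more detail than the paper does.
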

\begin{proof}
To prove that $T_{m,n}$ is NSOP$_1$ and $\iind = \trt{K}$, it suffices to show that $\iind$ satisfies the axioms given in Theorem~\ref{thm:characterizingkim}. Then also $\trt{K} = \trt{Kd}$ by~\cite[Proposition 3.18]{KRKim}.

By definition, $\iind$ clearly satisfies existence, monotonicity, and symmetry. We also have strong finite character and witnessing by Lemma \ref{lem:SFCW} and Remark \ref{rem:SFCW}. Finally, we have the independence theorem by Lemma \ref{lem:IT} and since $\iind$ implies $\trt{a}$.
\end{proof}

\begin{remark}\label{rem:FE}
While the existence axiom for $\iind$ is sufficient in the proof of NSOP$_1$, below we will need to use the fact that $\iind$ satisfies full existence. Of course, $\trt{K}$ satisfies full existence over models in any NSOP$_1$ theory~\cite[Proposition 3.19]{KRKim}, but we'll give a proof for $\iind$ that works over arbitrary subsets of $\M\models T_{m,n}$. Fix $C\subset\M$ and $a,b\in\M$ and let $A=\acl(Ca)$, $B=\acl(Cb)$, and $C_*=\acl(C)$. We have the inclusion maps $f\colon C_*\to A$ and $g\colon C_*\to B$. By the proof of Proposition \ref{prop:amalgamation}, there is $D\models T^p_{m,n}$ and embeddings $f_*\colon A\to D$ and $g_*\colon B\to D$ such that $f_*|C_*=g_*|C_*$, $f_*(A)\cap f_*(B)=f_*(C_*)$ and there are no incidences between $f_*(A)\backslash f_*(C_*)$ and $g_*(B)\backslash f_*(C_*)$. By Proposition \ref{prop:extend}, there is $\widehat{g}\colon D\to\M$ such that $\widehat{g}g_*$ is inclusion on $B$. Let $h=\widehat{g}f_*$, and so $h$ is an $\cL$-embedding from $A$ to $\M$ fixing $C$ pointwise. If $a'=h(a)$, then $a'\equiv_C a$ by Corollary \ref{cor:extend}.  By construction $\widehat{g}f_*(A)\iind_{C_*}B$. Since $\widehat{g}f_*(A)=\acl(a'C)$, we have $a'\iind_C b$.
\end{remark}

\subsection{Dividing independence and simplicity}\label{subsec:simple}

Our next goal is to show that if $m,n\geq 2$ then $T_{m,n}$ is not simple, and so is \emph{properly} NSOP$_1$. In fact, an explicit witness of TP$_2$ for $T_{m,n}$ is not difficult to construct (see Proposition \ref{prop:TP2} below). However, it will be useful and interesting to first develop a few more properties of $\iind$, which will allow us to characterize dividing independence.

\begin{proposition}\label{prop:Kequiv}
Given $C = \acl(C) \subset\M\models T_{m,n}$ and $a,b\in\M$, the following are equivalent:
\begin{enumerate}[$(i)$]
\item $a\iind_C b$;
\item for any $\trt{a}$-independent $(b_i)_{i\in\omega}$ in $\tp(b/C)$ there is $a'$ such that $a'b_i\equiv_C ab$ for all $i\in\omega$;
\item for any $\iind$-independent $(b_i)_{i\in\omega}$ in $\tp(b/C)$ there is $a'$ such that $a'b_i\equiv_C ab$ for all $i\in\omega$;
\item for some $\iind$-independent $(b_i)_{i\in\omega}$ in $\tp(b/C)$ there is $a'$ such that $a'b_i\equiv_C ab$ for all $i\in\omega$.
\end{enumerate}
Moreover, if $(ii)'$, $(iii)'$, and $(iv)'$ are obtained from $(ii)$, $(iii)$, and $(iv)$ by adding the assumption that $(b_i)_{i\in\omega}$ is $C$-indiscernible, then $(ii)'$, $(iii)'$, and $(iv)'$ are also equivalent to $(i)$ through $(iv)$. 
\end{proposition}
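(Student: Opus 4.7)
My plan is to verify the circular chain $(i) \Rightarrow (ii) \Rightarrow (iii) \Rightarrow (iv) \Rightarrow (i)$, together with the corresponding chain for the primed statements $(ii)', (iii)', (iv)'$ looping back through $(i)$. The crucial implication is $(i) \Rightarrow (ii)$, driven by the independence theorem for $\iind$ (Lemma \ref{lem:IT}); the return implication $(iv) \Rightarrow (i)$ is then a quick contrapositive argument using the witnessing property of Lemma \ref{lem:SFCW}(iv). Every other step is either trivial (weaker hypothesis on the sequence, or restriction to the indiscernible case) or a direct appeal to full existence of $\iind$ (Remark \ref{rem:FE}).

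For $(i) \Rightarrow (ii)$, assume $a \iind_C b$ and let $(b_i)_{i \in \omega}$ be $\trt{a}$-independent in $\tp(b/C)$. I will inductively construct $a_n$ satisfying
\[
(\star_n): \quad a_n b_i \equiv_C ab \text{ for all } i < n, \quad \text{and} \quad a_n \iind_C b_{<n},
\]
starting with $a_0 = a$. For the inductive step, invariance supplies a $C$-conjugate $a^{*}$ of $a$ with $a^{*} b_n \equiv_C ab$ and $a^{*} \iind_C b_n$. The data $(a_n, a^{*}, b_{<n}, b_n)$ then satisfies the hypotheses of Lemma \ref{lem:IT}: $a_n \iind_C b_{<n}$ by induction, $a^{*} \iind_C b_n$ by construction, $b_{<n} \trt{a}_C b_n$ from $\trt{a}$-independence of the given sequence, and $a_n \equiv_C a^{*}$ since both are $C$-conjugates of $a$. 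The lemma yields $a_{n+1}$ with $a_{n+1} \equiv_{Cb_{<n}} a_n$, $a_{n+1} \equiv_{Cb_n} a^{*}$, and $a_{n+1} \iind_C b_{<n+1}$, so $(\star_{n+1})$ holds. A compactness argument then produces $a' \in \M$ realizing the finitely satisfiable partial type $\{\varphi(x, b_i) : \varphi(x,y) \in \tp(ab/C),\ i \in \omega\}$.

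For $(iv) \Rightarrow (i)$ I use the contrapositive. If $a \niind_C b$, then Lemma \ref{lem:SFCW} provides a formula $\varphi(x,y) \in \tp(a/Cb)$, and for any $\iind$-independent $(b_i)$ in $\tp(b/C)$ with $a'b_i \equiv_C ab$ as in $(iv)$, we get $\varphi(x, b_i) \in \tp(a'/Cb_i)$ for every $i$, so $a'$ realizes $\{\varphi(x,b_i) : i \in \omega\}$ in contradiction with Lemma \ref{lem:SFCW}(iv). The intermediate $(ii) \Rightarrow (iii) \Rightarrow (iv)$ is routine: every $\iind$-independent sequence is $\trt{a}$-independent, and an $\iind$-independent sequence in $\tp(b/C)$ exists by full existence (Remark \ref{rem:FE}). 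The primed chain reduces to the unprimed one via the trivial implications $(i) \Rightarrow (ii)'$ (which is a weakening of $(i) \Rightarrow (ii)$), $(ii)' \Rightarrow (iii)'$, and $(iv)' \Rightarrow (iv)$. The only new ingredient is $(iii)' \Rightarrow (iv)'$, which needs the existence of a $C$-indiscernible $\iind$-independent sequence in $\tp(b/C)$. I would obtain one by using full existence to build a long $\iind$-independent sequence $(b_\alpha)_{\alpha < \kappa}$ and then extracting a $C$-indiscernible sequence via Erd\H{o}s--Rado. Preservation of $\iind$-independence in the extraction is handled by monotonicity of $\iind$ (which makes every finite subsequence of $(b_\alpha)$ still $\iind$-independent) together with Corollary \ref{cor:acltype}, which ensures that $\iind$-independence of a finite tuple depends only on its $C$-type.

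The main obstacle is the inductive step of $(i) \Rightarrow (ii)$: Lemma \ref{lem:IT} demands $\iind$-independence on both sides of the amalgamation, while the hypothesis of $(ii)$ only provides $\trt{a}$-independence of the parameter sequence. The construction works because Lemma \ref{lem:IT} is self-feeding — it does not merely amalgamate types but also delivers a new $\iind$-independent pair $(a_{n+1}, b_{<n+1})$, which is exactly what the next invocation needs. A secondary, minor issue is preservation of $\iind$-independence under Erd\H{o}s--Rado extraction in the primed case, which is handled by invariance and the algebraic-closure characterization of types in Corollary \ref{cor:acltype}.
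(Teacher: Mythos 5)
Your proposal is correct and follows essentially the same route as the paper: the key implication $(i)\Rightarrow(ii)$ by iterating Lemma~\ref{lem:IT} along the $\trt{a}$-independent sequence and concluding by compactness, $(iv)\Rightarrow(i)$ by the witnessing property in Lemma~\ref{lem:SFCW}, full existence (Remark~\ref{rem:FE}) for $(iii)\Rightarrow(iv)$, and extraction of a $C$-indiscernible $\iind$-independent sequence for the primed statements (which the paper leaves as ``the usual way''). The only cosmetic difference is that preservation of $\iind$-independence under extraction needs just automorphism-invariance of $\iind$ over $C$ together with monotonicity, rather than Corollary~\ref{cor:acltype}, but this does not affect correctness.
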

\begin{proof}
$(ii)\Rightarrow(iii)$ is trivial since $\iind$ implies $\trt{a}$. $(iii)\Rightarrow (iv)$ is trivial since $\iind$-independent sequences exist by full existence for $\iind$ (see Remark \ref{rem:FE}). $(iv)\Rightarrow (i)$ is by Lemma \ref{lem:SFCW}.

$(i)\Rightarrow(ii)$. Fix $\trt{a}$-independent $(b_i)_{i\in\omega}$ in $\tp(b/C)$. Without loss of generality,  $b=b_0$. For $i\in\omega$, let $a_i$ be such that $a_ib_i\equiv_C ab$. By induction, we construct $(a'_n)_{n\in\omega}$ such that, for all $n\in\omega$, $a'_n\iind_C b_{\leq n}$ and $a'_nb_i\equiv_C ab$ for all $i\leq n$. The desired result then follows by compactness. For the base case, let $a'_0=a_0$. Assume we have constructed $a'_n$ as desired. Then we have $a'_n\iind_C b_{\leq n}$, $a_{n+1}\iind_C b_{n+1}$, $b_{n+1}\trt{a}_C b_{\leq n}$, and $a'_n\equiv_C a_{n+1}$. By Lemma \ref{lem:IT}, there is $a'_{n+1}$ such that $a'_{n+1}\equiv_{Cb_{\leq n}}a'_n$, $a'_{n+1}\equiv_{Cb_{n+1}}a_{n+1}$, and $a'_{n+1}\iind_C b_{\leq n+1}$. Then $a'_{n+1}b_{n+1}\equiv_C a_{n+1}b_{n+1}\equiv_C ab$ and, for $i\leq n$, $a'_{n+1}b_i\equiv_C a'_nb_i\equiv_C ab$ by induction. So $a'_{n+1}$ is as desired.

For the moreover statement, note that $(ii)\Rightarrow (ii)'$ and $(iv)'\Rightarrow(iv)$ are trivial, and $(ii)'\Rightarrow (iii)'$ is again immediate since $\iind$ implies $\trt{a}$. To finish the equivalences, it suffices to show $(iii)'\Rightarrow (iv)'$, which is immediate since a $C$-indiscernible $\iind$-independent sequence in $\tp(b/C)$ can be extracted from an $\iind$-independent sequence in $\tp(b/C)$ in the usual way. 
\end{proof}

From Proposition \ref{prop:Kequiv}, we obtain a combinatorial characterization of $\trt{d}$ as the ternary relation obtained by ``forcing" base monotonicity on $\iind$.

\begin{corollary}\label{cor:div}
In $T_{m,n}$, $\trt{d}$ implies $\iind$. Moreover, for any $C\subset\M\models T_{m,n}$ and $a,b\in\M$,
\[
\textstyle a\trt{d}_C b ~\Leftrightarrow~a\iind_D b\text{ for all $D$ such that $C\seq D\seq\acl(Cb)$.}
\]
\end{corollary}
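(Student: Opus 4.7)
The plan is to exploit the combinatorial characterization of $\iind$ from Proposition~\ref{prop:Kequiv}: over an algebraically closed base $E$, $a\iind_E b$ holds if and only if some $a'$ realizes $\tp(ab/E)$ along every (equivalently, some) $E$-indiscernible $\iind_E$-independent sequence in $\tp(b/E)$. Since $\trt{d}$ and $\iind$ are each invariant under replacing the base by its algebraic closure, we may freely do this when convenient.

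The first assertion $\trt{d}\Rightarrow\iind$ is immediate from this framework: pick any $C$-indiscernible $\iind_C$-independent sequence $(b_i)_{i\in\omega}$ in $\tp(b/C)$ (existence follows from full existence of $\iind$, Remark~\ref{rem:FE}, together with standard extraction of indiscernibles). The hypothesis $a\trt{d}_C b$ produces $a'$ with $a'b_i\equiv_C ab$ for all $i$, which by Proposition~\ref{prop:Kequiv}(iv)' yields $a\iind_C b$.

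For the forward direction of the biconditional, fix $D$ with $C\seq D\seq\acl(Cb)$ and suppose, for contradiction, that $a\niind_D b$. Lemma~\ref{lem:SFCW} supplies an $\cL_D$-formula $\varphi(x,y)\in\tp(a/Db)$ with $\{\varphi(x,b_i):i\in\omega\}$ inconsistent along every $\iind_D$-independent sequence in $\tp(b/D)$. Enumerating $D\setminus C$ by $d$, write $\varphi(x,y)=\psi(x,y,d)$ with $\psi$ an $\cL_C$-formula. Choose such an $\iind_D$-independent sequence that is additionally $D$-indiscernible (Ramsey); then, since $d$ is fixed and $(b_i)$ is $D$-indiscernible, the sequence $(b_i,d)_{i\in\omega}$ is $C$-indiscernible in $\tp(bd/C)$, and the inconsistency of $\{\psi(x,b_i,d)\}$ witnesses $a\ntrt{d}_C bd$. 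Because $d\in\acl(Cb)$ and dividing is invariant under enlargement of the right-hand tuple within its algebraic closure (a standard pigeon-hole argument along a refined $C$-indiscernible sequence on the finite $Cb$-conjugates of the added elements, cf.~\cite{casanovas}), we deduce $a\ntrt{d}_C b$, contradicting the hypothesis.

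For the reverse direction, suppose $a\iind_D b$ for every $D$ with $C\seq D\seq\acl(Cb)$, and fix a $C$-indiscernible sequence $(b_i)_{i\in\omega}$ in $\tp(b/C)$. The plan is to refine (via Ramsey-style extraction) this sequence so that it becomes $D^*$-indiscernible and $\iind_{D^*}$-independent over a well-chosen intermediate base $D^*$ with $C\seq D^*\seq\acl(Cb_0)$; roughly, $D^*$ should consist of $C$ together with those elements of $\acl(Cb_0)$ that remain fixed across the sequence under automorphisms witnessing indiscernibility. Once such $D^*$ is in hand, the hypothesis $a\iind_{D^*}b$ and Proposition~\ref{prop:Kequiv}(iii)' yield $a'$ with $a'b_i\equiv_{D^*}ab$, hence $\equiv_C ab$, giving $a\trt{d}_C b$. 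The principal obstacle is exactly this extraction: identifying the correct $D^*$ and verifying $\iind_{D^*}$-independence after refinement. The iterative $I$-closure structure of $T_{m,n}$—any algebraic coincidence between $\acl(Cb_i)$ and $\acl(Cb_j)$ is forced at a finite stage of the closure process—should make this tractable.
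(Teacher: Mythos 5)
Your first assertion and the forward direction of the biconditional are correct. The forward direction is more laborious than it needs to be: you essentially re-derive base monotonicity of dividing by hand, via Lemma~\ref{lem:SFCW}$(iv)$ together with preservation of dividing under enlarging the right-hand side inside its algebraic closure, whereas one can simply cite base monotonicity of $\trt{d}$ (which holds in any theory, with the base allowed to move inside $\acl(Cb)$) and then apply your first assertion over the base $D$. That is a detour, not an error.

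The reverse direction, however, has a genuine gap, and you flag it yourself: the ``extraction'' you defer is the entire content of that direction. Moreover, the property you ask of the refined sequence --- $\iind$-independence over $D^*$ --- is stronger than anything you need and is not obviously attainable: $b_i\iind_{D^*}b_{<i}$ forbids incidences between $\acl(D^*b_i)\setminus\acl(D^*)$ and $\acl(D^*b_{<i})\setminus\acl(D^*)$, and the latter set contains elements lying in none of the tuples $b_j$, so indiscernibility plus bookkeeping of the constant part does not control those incidences; the appeal to finite stages of $I$-closure is only a hope, with no argument given. The way out is to notice that $\iind$-independence of the sequence is never needed: Proposition~\ref{prop:Kequiv} equates $a\iind_{D^*}b$ with the existence of a suitable $a'$ along every $\trt{a}$-independent sequence (the $(i)\Rightarrow(ii)$ direction), and $\trt{a}$-independence over the constant part is automatic. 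Cleanest is to contrapose: assume $a\ntrt{d}_C b$, replace $C$ and $b$ by their algebraic closures with $C\seq b$, take a $C$-indiscernible sequence $(b_i)_{i\in\omega}$ witnessing dividing, and let $d$ be the subtuple of $b$ constant along it. Then $d=\acl(d)$, $C\seq d\seq\acl(Cb)$, the sequence is $\trt{a}$-independent in $\tp(b/d)$, and there is no $a'$ with $a'b_i\equiv_d ab$ for all $i$ (such an $a'$ would in particular work over $C$); so $(i)\Rightarrow(ii)$ of Proposition~\ref{prop:Kequiv} yields $a\niind_d b$, contradicting the right-hand side. Contraposing also disposes of the other step you left unaddressed, namely coherently extending an arbitrary given $C$-indiscernible sequence to enumerations of the closures $\acl(Cb_i)$.
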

\begin{proof}
First, note that $\trt{d}$ implies $\iind$ by definition of $\trt{d}$ and Proposition \ref{prop:Kequiv}. Since $\trt{d}$ satisfies base monotonicity in any theory, we have the forward direction of the desired equivalence. For the converse direction, suppose $a\ntrt{d}_ C b$. We may assume $b$ and $C$ are algebraically closed and $C$ is contained in $b$. Fix a $C$-indiscernible sequence $(b_i)_{i\in \omega}$ witnessing $a\ntrt{d}_C b$, and let $d$ be the subtuple of $b$ which is constant along the indiscernible sequence. Then $d$ is algebraically closed, $C$ is contained in $d$, and $(b_i)_{i\in \omega}$ is an $\trt{a}$-independent sequence in $\tp(b/d)$. By choice of the sequence, there is no $a'$ such that $a'b_i\equiv_{d} ab$ for all $i\in \omega$. So $a\niind_{d} b$ by Proposition \ref{prop:Kequiv}.
\end{proof}

\begin{remark}
Proposition \ref{prop:Kequiv} and Corollary \ref{cor:div} still hold if we replace $\iind$ by an invariant ternary relation on $\M$, which implies $\trt{a}$ and satisfies full existence and the variations of strong finite character and witnessing and the independence theorem given by Lemmas \ref{lem:SFCW} and \ref{lem:IT}.
\end{remark}

Given Corollary \ref{cor:div}, the natural question is whether, in $T_{m,n}$, $\iind$ satisfies base monotonicity  in the first place, which would then mean that $\trt{d}$ and $\iind$ coincide. But this is not the case.

\begin{proposition}\label{prop:BM}
If $m,n\geq 2$, then $\iind$ fails base monotonicity in $T_{m,n}$.
\end{proposition}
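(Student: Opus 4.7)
My plan is to argue by contradiction: assume $\iind$ satisfies base monotonicity in $T_{m,n}$, use Corollary~\ref{cor:div} to conclude $\iind=\trt{d}$, and then appeal to the Kim--Pillay characterization of simple theories to force $T_{m,n}$ to be simple---which contradicts the forthcoming Proposition~\ref{prop:TP2} exhibiting $\mathrm{TP}_2$ for $T_{m,n}$ when $m,n\geq 2$.

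First I would combine Corollary~\ref{cor:div} with the base monotonicity hypothesis. The corollary gives both $\trt{d}\Rightarrow\iind$ and the characterization $a\trt{d}_C b \iff a\iind_D b$ for every $D$ with $C\seq D\seq\acl(Cb)$. If $\iind$ were base-monotone, then $a\iind_C b$ would immediately yield $a\iind_D b$ for every such $D$, hence $a\trt{d}_C b$; combined with the other direction this gives $\iind=\trt{d}$.

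Next I would transfer properties. By Theorem~\ref{thm:NSOP1}, Lemma~\ref{lem:IT}, and Remark~\ref{rem:FE}, $\iind$ satisfies the independence theorem over models together with invariance, symmetry, monotonicity, and full existence. Under the hypothesis $\iind=\trt{d}$, the relation $\trt{d}$ would inherit all of these; in particular it would satisfy the independence theorem over models. The Kim--Pillay criterion for simple theories then forces $T_{m,n}$ to be simple, contradicting Proposition~\ref{prop:TP2}.

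The main obstacle is the forward reference to Proposition~\ref{prop:TP2}. A self-contained alternative would be to exhibit explicit $A,B,C,D\subset\M$ with $A\iind_C B$ but $A\niind_D B$ for some $D$ between $C$ and $\acl(BC)$. Producing such a configuration is delicate: enlarging the base from $C$ to $D\seq\acl(BC)$ must force either a new element of $\acl(AD)\cap\acl(BC)\setminus\acl(D)$ or a new incidence between $\acl(AD)\setminus\acl(D)$ and $\acl(BC)\setminus\acl(D)$, and the generic disjoint amalgamation underlying $\iind$ (Proposition~\ref{prop:amalgamation} and Remark~\ref{rem:FE}) systematically resists such coincidences over small bases, which is why the indirect argument via simplicity is cleaner.
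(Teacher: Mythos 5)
Your reduction ``base monotonicity $+$ Corollary~\ref{cor:div} $\Rightarrow$ $\iind=\trt{d}$'' is correct, but the rest of the argument has two genuine gaps. First, the Kim--Pillay criterion needs more than the axioms you list: besides invariance, monotonicity, symmetry, full existence, base monotonicity and the independence theorem, it requires transitivity, finite character, and---crucially---local character. Local character is exactly the axiom that encodes simplicity, so it cannot be waved through; it does hold for $\iind$, but the verification is a nontrivial construction that the paper only carries out later (Remark~\ref{rem:KiPi}), and your proposal neither proves nor cites it. (This particular gap could be avoided by using \cite[Proposition 8.4]{KRKim} instead: if $\iind=\trt{d}$, then $\trt{K}=\trt{d}$ over models by Theorem~\ref{thm:NSOP1}, and that proposition yields simplicity directly.) Second, even with simplicity in hand, your contradiction rests on Proposition~\ref{prop:TP2}, a forward reference whose proof in the paper is itself built on the explicit incidence configuration $\Gamma$ constructed \emph{in the paper's proof of the present proposition}, together with the computation that $a_1a_2\iind_\emptyset b\overline{c}$ but independence fails over $\overline{c}$. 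So the indirect route does not avoid the combinatorial work; it relocates it to a statement you have not established, and as written your proof would be circular at the level of the paper's organization (the TP$_2$ witness would have nowhere to come from).

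Your closing paragraph, which suggests that an explicit failure of base monotonicity is delicate to produce because generic amalgamation ``resists such coincidences,'' is mistaken, and this is where the paper's proof is in fact short and elementary. Take points $a_1,b,w_1,\ldots,w_{m-1}$ and lines $a_2,c_1,\ldots,c_{n-1},z$, with $a_1$ and $b$ on $z$, and each $w_i$ incident to $a_2$, to every $c_j$, and to $z$. This structure is $K_{m,n}$-free, and $a_1a_2$ and $b\overline{c}$ are each algebraically closed with no incidences between them, so $a_1a_2\iind_\emptyset b\overline{c}$. But over the base $\overline{c}$, the $m-1$ points $\overline{w}$ lie on the $n$ lines $a_2,c_1,\ldots,c_{n-1}$, hence $\overline{w}\seq\acl(a_1a_2\overline{c})$, and then $z$ lies on the $m$ points $a_1\overline{w}$, so $z\in\acl(a_1a_2\overline{c})\setminus\acl(\overline{c})$; since $I(b,z)$ and $b\notin\acl(\overline{c})$, we get $a_1a_2\niind_{\overline{c}}b\overline{c}$ with $\overline{c}\seq\acl(b\overline{c})$, which is exactly a failure of base monotonicity. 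The point is that enlarging the base can enlarge $\acl(A\,\cdot\,)$ enough to create new incidences with the other side, and a configuration of $m+n$ elements plus $\overline{w},z$ already does this.
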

\begin{proof}
Define an incidence structure $\Gamma$ such that 
\begin{align*}
P(\Gamma) &= \{a_1,b,w_1,\ldots,w_{m-1}\},\\
L(\Gamma) &= \{a_2,c_1,\ldots,c_{n-1},z\},\text{ and }\\
I(\Gamma) &= \{(a_1,z),(b,z)\}\cup\{(w_i,a_2),(w_i,c_j),(w_i,z):i<m,~j<n\}.
\end{align*}
We first note that $\Gamma$ is $K_{m,n}$-free, since $z$ is the only line in $\Gamma$ which contains more than $m-1$ points. So we may assume $\Gamma\subset\M$. Let $\overline{c}=(c_1,\ldots,c_{n-1})$ and $\overline{w}=(w_1,\ldots,w_{m-1})$. By Proposition \ref{prop:Iacl}, $a_1a_2$ and $b\overline{c}$ are each algebraically closed and so, by construction, $a_1a_2\iind_\emptyset b\overline{c}$. However, $a_1a_2\niind_{\overline{c}} b\overline{c}$. Indeed, we have $I(w_i,v)$ for all $v\in a_2\overline{c}$ and $i<m$ and so $\overline{w}\seq\acl(a_1a_2\overline{c})$. We also have $I(v,z)$ for all $v\in a_1\overline{w}$ and so, altogether, $z\in\acl(a_1a_2\overline{c})$. Therefore $I(b,z)$ holds, $z\in\acl(a_1a_2\overline{c})\backslash\acl(\overline{c})$, and $b\in\acl(b\overline{c})\backslash\acl(\overline{c})$, which gives $a_1a_2\niind_{\overline{c}}b\overline{c}$, as desired. 
\end{proof}

We can use this observation, together with general results on NSOP$_1$ theories, to make the following conclusion.

\begin{corollary}
If $m,n\geq 2$ then $T_{m,n}$ is not simple.
\end{corollary}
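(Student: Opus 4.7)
The plan is to assume $T_{m,n}$ is simple and derive a contradiction using Proposition~\ref{prop:BM}. The general fact to invoke is that an NSOP$_1$ theory is simple precisely when Kim independence $\trt{K}$ satisfies base monotonicity over models (see~\cite{KRKim}); equivalently, in a simple theory $\trt{K}$ coincides with forking $\trt{f}$ over models, and $\trt{f}$ always satisfies base monotonicity. Combined with $\iind=\trt{K}$ over models (Theorem~\ref{thm:NSOP1}), simplicity of $T_{m,n}$ would force $\iind$ to satisfy base monotonicity over every model.

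The remaining task is to lift the failure of base monotonicity from Proposition~\ref{prop:BM}, which is exhibited over $\emptyset$, to a failure over some model. Fix any small model $M\models T_{m,n}$ and use full existence for $\iind$ (Remark~\ref{rem:FE}) to realize a copy of the incidence structure $\Gamma$ from the proof of Proposition~\ref{prop:BM} inside $\M$ so that this copy is $\iind$-independent from $M$ over $\emptyset$. Denote the resulting tuples by $a_1,a_2,b,\overline{c},\overline{w},z$, mirroring the notation there.

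The key observation is that $\iind$-independence of the copy of $\Gamma$ from $M$ forbids any incidences between $\acl(\Gamma)$ and $M$ outside $\acl(\emptyset)$, so the $I$-closure computations relative to $M$ reduce to those done over $\emptyset$ in Proposition~\ref{prop:BM}: no new closure elements arise from interaction with $M$. On the positive side, this yields $\acl(Ma_1a_2)\cap\acl(Mb\overline{c})=M$ with no cross-incidences, giving $a_1a_2\iind_M b\overline{c}$. On the negative side, the same closure chain as in Proposition~\ref{prop:BM}---deriving $\overline{w}$ as intersection points of $\{a_2,c_1,\dots,c_{n-1}\}$ and then $z$ as the connecting line of $\{a_1,w_1,\dots,w_{m-1}\}$---produces $z\in\acl(Ma_1a_2\overline{c})\setminus\acl(M\overline{c})$ and $b\in\acl(Mb\overline{c})\setminus\acl(M\overline{c})$ with $I(b,z)$, so $a_1a_2\niind_{M\overline{c}}b\overline{c}$. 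This violates base monotonicity of $\iind$ over $M$, completing the contradiction.

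The only mild technical obstacle is the relativization step: checking that $\iind$-independence of the copy of $\Gamma$ from $M$ really suffices to preserve both the positive witness $a_1a_2\iind_M b\overline{c}$ and the negative witness $a_1a_2\niind_{M\overline{c}}b\overline{c}$. This follows directly from the combinatorial definition of $\iind$ together with the $I$-closedness of $M$, since any new closure element produced by combining $M$ with a piece of $\Gamma$ would require an incidence between $\acl(\Gamma)\setminus\acl(\emptyset)$ and $M\setminus\acl(\emptyset)$, which is exactly what $\iind$-independence rules out.
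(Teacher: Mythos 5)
Your overall strategy is the same as the paper's: invoke the Kaplan--Ramsey criterion for simplicity of an NSOP$_1$ theory, identify $\iind$ with $\trt{K}$ over models via Theorem~\ref{thm:NSOP1}, and transplant the configuration $\Gamma$ of Proposition~\ref{prop:BM} over a model $M$ so that the required monotonicity fails there. (The paper phrases the criterion as: simple iff $\trt{K}=\trt{d}$ over models, and runs the contradiction through Corollary~\ref{cor:div} plus base monotonicity of $\trt{d}$; your route through $\trt{f}$ is essentially the same, but note that to pass from $a_1a_2\trt{f}_{M\overline{c}}b\overline{c}$ back to $a_1a_2\iind_{M\overline{c}}b\overline{c}$ you still need $\trt{f}\Rightarrow\trt{d}\Rightarrow\iind$, i.e.\ Corollary~\ref{cor:div}, which you do not cite.)

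The genuine gap is in your relativization step. It is not true that realizing the copy of $\Gamma$ with $\Gamma\iind_\emptyset M$ makes the closure computations over $M$ ``reduce'' to those over $\emptyset$, nor that a new closure element would require an incidence between $\acl(\Gamma)$ and $M$. Since $\M\models T^c_{m,n}$ and $m,n\geq 2$, the point $a_1$ together with any $m-1$ distinct points of $M$ lies on exactly $n-1\geq 1$ lines of $\M$; such a line belongs to $\acl(Ma_1a_2)$, lies neither in $M$ nor in $\acl(\Gamma)$ (either would force a forbidden incidence with $M$ or with $\acl(\Gamma)$), and its existence involves no incidence between $\acl(\Gamma)$ and $M$, so $\Gamma\iind_\emptyset M$ says nothing about it. Hence $\acl(Ma_1a_2)$, $\acl(Mb\overline{c})$, and $\acl(M\overline{c})$ are all proper (indeed infinite) extensions of the sets you work with, and none of your three key claims ($\acl(Ma_1a_2)\cap\acl(Mb\overline{c})=M$ with no cross-incidences, $z\notin\acl(M\overline{c})$, $b\notin\acl(M\overline{c})$) follows from your construction. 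Indeed the positive witness can genuinely fail: for $m\geq 3$ one can realize $\Gamma$ with $\Gamma\iind_\emptyset M$ while some line of $\M$ is incident to $a_1$, to $b$, and to $m-1$ points of $M$ (this configuration is $K_{m,n}$-free and can be placed so that $\acl(\Gamma)$ avoids $M$, e.g.\ via Lemma~\ref{lem:extF}); that line then lies in $\acl(Ma_1a_2)\cap\acl(Mb\overline{c})\setminus M$, so $a_1a_2\niind_M b\overline{c}$. What is needed is a choice of the copy of $\Gamma$ for which $a_1a_2\iind_M b\overline{c}$, and also $b,z\notin\acl(M\overline{c})$, hold outright; the paper simply stipulates such a choice, which can be justified by placing the whole configuration freely over $M$ (embed the free completion of the free amalgam of $M$ and $\Gamma$ over $M$ via Proposition~\ref{prop:extend} and argue with Lemma~\ref{lem:extF}), rather than by independence over $\emptyset$, which does not suffice.
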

\begin{proof}
By \cite[Proposition 8.4]{KRKim}, an NSOP$_1$ theory is simple if and only if $\trt{K}$ and $\trt{d}$ coincide over models. So fix $M\models T_{m,n}$ and let $\Gamma$ be as in the proof of Proposition \ref{prop:BM}, and also such that $\Gamma\cap M=\emptyset$ and $a_1a_2\iind_M b\overline{c}$. Then $a_1a_2\trt{K}_M b\overline{c}$ by Theorem \ref{thm:NSOP1}. As in the proof of Proposition \ref{prop:BM}, $a_1a_2\niind_{M\overline{c}} b\overline{c}$ and so $a_1a_2\ntrt{d}_{M\overline{c}}b\overline{c}$ by Corollary \ref{cor:div}. Since $\trt{d}$ satisfies base monotonicity, it follows that $a_1a_2\ntrt{d}_M b\overline{c}$. Altogether, $\trt{d}_M\neq \trt{K}_M$. 
\end{proof}

On the other hand, we can also use the structure $\Gamma$ in the proof of Proposition \ref{prop:BM} to give an explicit example of TP$_2$ in $T_{m,n}$. In particular, let $\varphi(x_1,x_2;y_1,\ldots,y_n)$ be the $\cL$-formula such that $\varphi(a_1,a_2;b,\overline{c})$ holds if and only if there exist $w_1,\ldots,w_{m-1}$ and $z$ so that the incidence structure on $a_1a_2b\overline{c}\overline{w}z$ is precisely $\Gamma$. The next result demonstrates TP$_2$ for $\varphi$. This is somewhat superfluous, since any properly NSOP$_1$ theory must have TP$_2$. However, the properties of $\varphi$ shown in the proof will be used later in Proposition \ref{prop:forkingdividing}. 

\begin{proposition}\label{prop:TP2}
If $m,n\geq 2$ then $\varphi(x_1,x_2;y_1,\ldots,y_n)$ has TP$_2$ in $T_{m,n}$.
\end{proposition}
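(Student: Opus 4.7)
The plan is to exhibit an explicit TP$_2$ array for $\varphi$ in the monster $\M$. Fix a line $a_2^* \in \M$ and an $(m-1)$-tuple $\overline{p}^*$ of distinct points on $a_2^*$. Choose an $\iind$-independent sequence $(\overline{c}_i)_{i<\omega}$ over $\{a_2^*, \overline{p}^*\}$, where each $\overline{c}_i$ is an $(n-1)$-tuple of distinct lines passing through every point of $\overline{p}^*$ and distinct from $a_2^*$; and an $\iind$-independent sequence $(b_j)_{j<\omega}$ of points over $\{a_2^*, \overline{p}^*\}$, mutually $\iind$-independent from $(\overline{c}_i)_i$. By full existence for $\iind$ (Remark~\ref{rem:FE}) and standard Ramsey-extraction, such sequences exist and may be taken so that the resulting array is mutually indiscernible over $\{a_2^*, \overline{p}^*\}$. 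Place $a_{i,j} := (b_j, \overline{c}_i)$ at position $(i,j)$; in row $i$, $\overline{c}_i$ is fixed and $b_j$ varies.

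For row-$i$ inconsistency with $k := m(n-1)+1$: suppose $(a_1, a_2)$ realizes $\varphi(a_1, a_2; b_{j_l}, \overline{c}_i)$ for $l = 1, \ldots, k$, with witnesses $(\overline{w}^{(l)}, z^{(l)})$. Since $\overline{c}_i$ is fixed and the $n$ lines $\{a_2\} \cup \overline{c}_i$ share a unique $(m-1)$-point set $\overline{w}^*$ by $T^c_{m,n}$, all $\overline{w}^{(l)}$ equal $\overline{w}^*$. Because $\Gamma$ enforces $\lnot I(a_1, a_2)$ while $\overline{w}^* \subseteq a_2$, the set $\{a_1\} \cup \overline{w}^*$ consists of $m$ distinct points and therefore lies on exactly $n-1$ common lines in $\M$; each $z^{(l)}$ is one of these and contains $b_{j_l}$. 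The key combinatorial input is that $\iind$-independence of $(b_j)_j$ forbids $m+1$ of the $b_j$'s from lying on a common line: if $b_{j_0}, \ldots, b_{j_m}$ all lay on a line $\ell$, then by $T^c_{m,n}$ and Proposition~\ref{prop:Iacl}, $\ell \in \acl(\{a_2^*, \overline{p}^*, b_{j_0}, \ldots, b_{j_{m-1}}\})$, and the incidence $I(b_{j_m}, \ell)$ would contradict $b_{j_m} \iind_{\{a_2^*, \overline{p}^*\}} b_{j_0} \ldots b_{j_{m-1}}$. Hence each of the $n-1$ candidate lines contains at most $m$ of the $b_j$'s, so at most $m(n-1) < k$ parameters in row $i$ can be simultaneously satisfied, giving uniform $k$-inconsistency.

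For path consistency: by construction the common points of $\{a_2^*\} \cup \overline{c}_i$ are exactly $\overline{p}^*$ for every $i$, and by $T^c_{m,n}$ applied to the $m$ points $\{b_0\} \cup \overline{p}^*$, there are $n-1$ common lines; by saturation, pick $a_1^* \notin a_2^*$ on one such line $\ell^*$. Then $(a_1^*, a_2^*)$ witnesses $\varphi(a_1^*, a_2^*; b_0, \overline{c}_i)$ for every $i$, with $\overline{w} := \overline{p}^*$ and $z := \ell^*$, so the $0$-column is consistent. By mutual indiscernibility, for any $\eta : \omega \to \omega$ we have $(a_{i, \eta(i)})_{i<\omega} \equiv_\emptyset (a_{i, 0})_{i<\omega}$, so every path is consistent. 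The hardest part is making the row inconsistency bound uniform in $(a_1, a_2)$; this is precisely where the ``general position'' of $(b_j)_j$ coming from $\iind$-independence, combined with the $K_{m,n}$-free combinatorics of $T^c_{m,n}$, is essential.
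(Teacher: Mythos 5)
Your row-inconsistency argument is essentially sound and runs parallel to the paper's (pigeonhole over the at most $n-1$ lines through $\{a_1\}\cup\overline{w}^*$, then using independence of the $b_j$'s to forbid $m+1$ of them on one line), though you never pin down the types of the $b_j$ and $\overline{c}_i$ over $\{a_2^*,\overline{p}^*\}$, and several steps silently need this (e.g.\ $b_j$ not incident to $a_2^*$ and $b_j\notin\acl(a_2^*\overline{p}^*)$, so that the witnessed line is not $a_2^*$ and the appeal to $\iind$-independence applies). The genuine gap is in path consistency. First, the reduction ``$(a_{i,\eta(i)})_{i<\omega}\equiv_\emptyset(a_{i,0})_{i<\omega}$ by mutual indiscernibility'' is false for your array: every row reuses the \emph{same} sequence $(b_j)_j$, so a non-constant path has pairwise distinct $b$-entries while the $0$-column repeats $b_0$; the two tuples already differ in their equality pattern, hence in their types, and for the same reason the rows cannot be made mutually indiscernible in the usual sense. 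Second, your explicit witness does not extend to non-constant paths: if $a_2\supseteq\overline{p}^*$ (in particular $a_2=a_2^*$), then for each row the witness set $\overline{w}^i$ is forced to be $\overline{p}^*$, so the witness lines $z^i$ for rows using $n$ distinct $b_j$'s are $n$ distinct lines all containing the $m$ points $\{a_1\}\cup\overline{p}^*$ (distinct because a common line through $\overline{p}^*$ and two distinct $b_j$'s would contradict their $\iind$-independence over $\{a_2^*,\overline{p}^*\}$) --- a copy of $K_{m,n}$. So the witnessing strategy you exhibit provably fails off the constant path, and consistency of arbitrary paths in your array is a nontrivial claim that is simply not proved.

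A correct argument along your lines would have to build, for each path, a \emph{fresh} solution line $a_2$ together with fresh intersection points $\overline{w}^i$ of $\{a_2\}\cup\overline{c}_i$ and fresh lines $z^i$ through $a_1$, $b_{\eta(i)}$, $\overline{w}^i$, checking $K_{m,n}$-freeness and embedding over $\acl$ of the parameters via Proposition~\ref{prop:extend}. The paper avoids this entirely by structuring the array differently: it fixes one realization of $\varphi$ with $a_1a_2\iind_\emptyset b\overline{c}$, builds each row as an $\iind$-independent sequence $(b_i)_i$ over its own $\overline{c}$ (giving the row-inconsistency as you do), and then takes $\trt{a}$-independent copies $(b^j_{<\omega}\overline{c}^j)_j$ of the whole row configuration; a path then picks an $\trt{a}$-independent sequence of realizations of $\tp(b\overline{c})$, and consistency follows at once from Proposition~\ref{prop:Kequiv} applied to $a_1a_2\iind_\emptyset b\overline{c}$. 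That use of Proposition~\ref{prop:Kequiv} (or some substitute for it) is exactly the ingredient your proposal is missing.
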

\begin{proof}
Let $a_1,a_2,b,\overline{c}\in\M$ be such that $\varphi(a_1,a_2;b,\overline{c})$ holds (these elements exist since $\Gamma$ is $K_{m,n}$-free).  As above, $a_1a_2\iind_\emptyset b\overline{c}$. Let $b_0=b$ and, using full existence for $\iind$, construct a sequence $(b_i)_{i\in\omega}$ such that, for all $i<\omega$, $b_i\equiv_{\overline{c}}b_0$ and $b_i\iind_{\overline{c}}b_{<i}$. We claim that $\{\varphi(x_1,x_2;b_i,\overline{c}) \mid i\in\omega\}$ is $r$-inconsistent, where $r=(m+1)(n-1)$. Indeed, suppose we have $a'_1,a'_2\in\M$ such that $\varphi(a'_1,a'_2;b_{i_j},\overline{c})$ holds for some $i_1<\ldots<i_r$, witnessed by $w^j_1,\ldots,w^j_{m-1}$ and $z_j$ for $1\leq j\leq r$. For a fixed $1\leq j\leq r$, the elements $w^j_1,\ldots,w^j_{m-1}$ are $m-1$ distinct points which simultaneously lie on the distinct lines $a'_2,c_1,\ldots,c_{n-1}$. It follows that, for $1\leq j<k\leq r$, we have $\{w^j_1,\ldots,w^j_{m-1}\}=\{w^k_1,\ldots,w^k_{m-1}\}$. After relabeling, we may assume $w^j_i=w^k_i=:w_i$ for all $1\leq j<k\leq r$ and $1\leq i\leq m-1$. Now we have $I(v,z_j)$ for all $v\in a'_1\overline{w}$ and $1\leq j\leq r$, and so $z_1,\ldots,z_r$ consists of at most $n-1$ distinct lines. By choice of $r$, there are $1\leq j_1<\ldots<j_{m+1}\leq r$ such that $z_{j_s}=z_{j_t}=:z$ for all $1\leq s<t\leq m+1$. After re-indexing, we may assume $j_t=t$ for $1\leq t\leq m+1$. We have $I(b_{i_t},z)$ for all $1\leq t\leq m$, and so $z\in\acl(b_{i_1},\ldots,b_{i_m})\backslash\acl(\overline{c})$. Moreover, $I(b_{i_{m+1}},z)$ holds and $b_{i_{m+1}}\in\acl(b_{i_{m+1}})\backslash\acl(\overline{c})$. This contradicts $b_{i_{m+1}}\iind_{\overline{c}}b_{<i_{m+1}}$. 

Now let $b^0_{<\omega}\overline{c}^0=b_{<\omega}\overline{c}$ and construct $(b^j_{<\omega}\overline{c}^j)_{j\in\omega}$ such that, for all $j\in\omega$, $b^j_{<\omega}\overline{c}^j\equiv_\emptyset b_{<\omega}\overline{c}$ and $b^j_{<\omega}\overline{c}^j\trt{a}_\emptyset b^{<j}_{<\omega}\overline{c}^{<j}$. Then, for any $\sigma\colon \omega\to\omega$, we claim that $\{\varphi(x_1,x_2;b^j_{\sigma(j)},\overline{c}^j) \mid j\in\omega\}$ is consistent. While this is not hard to show directly, it follows from Proposition \ref{prop:Kequiv} since $a_1a_2\iind_\emptyset b\overline{c}$ and $(b^j_{\sigma(j)}\overline{c}^j)_{j\in\omega}$ is an $\trt{a}$-independent sequence in $\tp(b\overline{c})$. 

Finally, given $i,j\in\omega$, let $\overline{c}^j_i=\overline{c}^j$. By the last two paragraphs, the array $\{b^j_i\overline{c}^j_i \mid i,j\in\omega\}$ witnesses TP$_2$ for $\varphi(x_1,x_2;y_1,\ldots,y_n)$. 
\end{proof}

\begin{remark}\label{rem:KiPi}
The Kim-Pillay characterization of simple theories \cite{kim-pillay} says that a theory $T$ is simple if and only if there is an invariant ternary relation $\gind$ on $\M\models T$ satisfying monotonicity, base monotonicity, symmetry, transitivity, finite character, local character, full existence, and the independence theorem. It is therefore interesting to observe that in $T_{m,n}$, $\iind$ satisfies all axioms on this list, except for base monotonicity when $m,n\geq 2$. To verify this, it remains to show transitivity (which will be used in Lemma \ref{lem:mixed})  and local character (note that finite character is immediate from Lemma \ref{lem:SFCW}).

\emph{Transitivity}: Fix $D\seq C\seq B$ and suppose $A\iind_D C$ and $A\iind_C B$. Suppose we have $x\in\acl(AD)$ and $y\in\acl(B)$ such that $x$ and $y$ are either incident or equal. We want to show $x\in\acl(D)$ or $y\in\acl(D)$. Since $D\seq C$ and $A\iind_C B$, we have $x\in\acl(C)$ or $y\in\acl(C)$. If $x\in \acl(C)$ then $x\in\acl(AD)\cap\acl(C)$ and so $x\in \acl(D)$ since $A\iind_D C$. If $y\in\acl(C)$ then $A\iind_D C$ implies $x\in\acl(D)$ or $y\in\acl(D)$.

\emph{Local character}: We will use the following fact, which is evident from Adler's proof of local character for $\trt{a}$ (see \cite[Proposition 1.5]{adler}). For any $A$, $B$, and $C_0\seq B$ with $|C_0|\leq|A|+\aleph_0$, there is $C\seq B$ such that $|C|\leq|A|+\aleph_0$, $C_0\seq C$, and $A\trt{a}_C B$.

Fix $A$ and $B$, and let $\kappa=|A|+\aleph_0$. We construct $C_i\seq D_i\seq C_{i+1}\seq B$, for $i\in\omega$, such that $|C_i|\leq\kappa$, $A\trt{a}_{C_i} B$ and, if $x\in\acl(AC_i)$ and $y\in \acl(B)$ are incident, then $x\in \acl(D_i)$ or $y\in \acl(D_i)$. The construction is by induction where we assume we have $C_i$, then construct $D_i$ and $C_{i+1}$. For the base case, we start with $C_0\seq B$ so that $|C_0|\leq\kappa$ and $A\trt{a}_{C_0} B$. Suppose we have $C_i$ as desired. We claim that if $x\in\acl(AC_i)\backslash \acl(C_i)$, then there are at most $\max\{m-1,n-1\}$ elements $y\in \acl(B)$ incident to $x$. Indeed, otherwise we would have $x\in\acl(AC_i)\cap \acl(B)=\acl(C_i)$ (since $A\trt{a}_{C_i} B$). So let $E_i\seq \acl(B)$ be the set of elements in $\acl(B)$ incident to some element of $\acl(AC_i)\backslash \acl(C_i)$. By the claim, $|E_i|\leq|\acl(AC_i)|+\aleph_0\leq\kappa$. By finite character of algebraic closure, we may find $D_i\seq B$ such that $C_iE_i\seq\acl(D_i)$ and $|D_i|\leq\kappa$. If $x\in\acl(AC_i)$ and $y\in \acl(B)$ are incident, then either $x\in \acl(C_i)\seq \acl(D_i)$ or, by construction, $y\in E_i\seq \acl(D_i)$. Now choose  $C_{i+1}\seq B$ such that $|C_{i+1}|\leq\kappa$, $D_i\seq C_{i+1}$, and $A\trt{a}_{C_{i+1}}B$. 

Finally, let $C=\bigcup_{i\in\omega}C_i=\bigcup_{i\in\omega}D_i\seq B$. By construction $|C|\leq\kappa$. We show $A\iind_C B$. Fix $x\in\acl(AC)$ and $y\in \acl(B)$ such that $x$ and $y$ are either incident or equal. We show $x\in \acl(C)$ or $y\in \acl(C)$. By construction and finite character of algebraic closure, there is some $i\in\omega$ such that $x\in\acl(AC_i)$. If $x=y$ then $x\in \acl(C_i)\seq \acl(C)$ since $A\trt{a}_{C_i}B$. If $x$ and $y$ are incident, and $x\not\in \acl(C)$, then $x\in\acl(AC_i)\backslash \acl(C_i)$, and so $y\in E_i\seq \acl(D_i)\seq \acl(C)$. 
\end{remark}

\subsection{Forking independence and ``free independence''}\label{subsec:forking}

The goal of this section is to show $\trt{d}$ and $\trt{f}$ coincide in $T_{m,n}$ (i.e., forking and dividing are the same for complete types). To do this, we use a common strategy, also used in \cite{conant}, \cite{conant-terry}, and \cite{KrRa}, for showing that dividing in generic structures satisfies the extension axiom (from which it follows that $\trt{d}$ and $\trt{f}$ coincide). In particular, we define an auxiliary independence relation, which satisfies full existence and a ``mixed transitivity" property for dividing (see Lemma \ref{lem:mixed}). From this, extension for dividing independence follows immediately (see Corollary \ref{cor:f=d}). For $T_{m,n}$, this new relation is defined using the free completion of models of $T^p_{m,n}$.

\begin{definition}
Define $\ter{\otimes}$ on $\M\models T_{m,n}$ such that, given $A,B,C\subset\M$,
\[
\textstyle A\ter{\otimes}_C B~\Leftrightarrow ~A\iind_C B\text{ and }\acl(ABC)\cong_{ABC} F(\acl(AC)\acl(BC)).
\]
\end{definition}

We first prove a technical lemma which will be at the heart of several results involving $\ter{\otimes}$.

\begin{lemma}\label{lem:extF}
Suppose $B\models T^p_{m,n}$ and $A\seq B$ is $I$-closed in $B$. Then there is a substructure $C\seq F(B)$ such that $C\cong_A F(A)$, $A= B\cap C$, and there are no incidences between $C\backslash A$ and $B\backslash A$. 
\end{lemma}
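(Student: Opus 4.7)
The plan is to construct $C$ by a stage-by-stage induction, mirroring the Hall construction of $F(A)$ inside $F(B)$. I will build embeddings $\phi_k\colon F_k(A)\to F_k(B)$ with image $C_k$, starting with $\phi_0=\mathrm{id}_A$, and at each stage extend $\phi_k$ to $\phi_{k+1}$ by sending each new line $y_\sigma\in F_{k+1}(A)\setminus F_k(A)$ (for $\sigma\in\Sigma^P_k(A)$) to the line $y_{\phi_k(\sigma)}\in F_{k+1}(B)\setminus F_k(B)$ that the Hall construction of $F(B)$ introduces for the index $\phi_k(\sigma)$, and dually for the new points $x_\tau$. Setting $\phi=\bigcup_k\phi_k$ and $C=\bigcup_k C_k$ will then give an embedding $\phi\colon F(A)\hookrightarrow F(B)$ fixing $A$, with image $C$.

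The crucial inductive invariant to maintain is that $C_k$ is $I$-closed in $F_k(B)$: any common line in $F_k(B)$ of $m$ points in $C_k$ already lies in $C_k$, and dually for common points of $n$ lines. The base case $k=0$ is precisely the hypothesis that $A$ is $I$-closed in $B$. This invariant is what makes the recursive definition well-posed: it forces the common lines of $\phi_k(\sigma)\subseteq C_k$ in $F_k(B)$ to be $\phi_k$-images of common lines of $\sigma$ in $F_k(A)$, so $\sigma\in\Sigma^P_k(A)$ implies $\phi_k(\sigma)\in\Sigma^P_k(B)$ and the target $y_{\phi_k(\sigma)}$ really is added to $F_{k+1}(B)$.

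The main technical step, which I expect to be the main obstacle, is propagating the $I$-closure invariant from $C_k$ to $C_{k+1}$. Given an $m$-subset $\sigma\subseteq C_{k+1}$ of points and a common line $\ell\in F_{k+1}(B)$, I case-analyze. If $\sigma$ contains a new point $p=x_{\phi_k(\tau)}$, then $p$ in $F_{k+1}(B)$ is incident only to the lines of $\phi_k(\tau)\subseteq C_k$, which forces $\ell\in C_k$ and rules out $\ell$ being a new line. Otherwise $\sigma\subseteq C_k$: if $\ell\in F_k(B)$, the invariant at stage $k$ gives $\ell\in C_k$; if $\ell=y_\mu$ is new, then $\ell$ is incident only to $\mu$, forcing $\mu=\sigma\subseteq C_k$, and the invariant at stage $k$ then gives $\phi_k^{-1}(\mu)\in\Sigma^P_k(A)$, so that $\ell=\phi_{k+1}(y_{\phi_k^{-1}(\mu)})\in C_{k+1}$. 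The dual argument for common points of $n$ lines is symmetric.

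Once the inductive construction is in place, the remaining conditions follow directly. Every element of $C\setminus A$ is introduced at some positive stage, so lies in $F(B)\setminus B$, giving $C\cap B=A$. For the absence of incidences between $C\setminus A$ and $B\setminus A$, observe that when $z\in C$ is added at stage $k+1$ as $y_{\phi_k(\sigma)}$, or dually as $x_{\phi_k(\tau)}$, its incidences in $F(B)$ consist of the specified tuple inside $C_k$ together with later-added elements in $F(B)\setminus B$ whose specifying tuple contains $z$. No element of $B\setminus A$ is of either form, because $C_k\cap B=A$ and $B\cap(F(B)\setminus B)=\emptyset$.
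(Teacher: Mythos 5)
Your proposal is correct and follows essentially the same route as the paper: both arguments run the Hall construction of $F(A)$ inside $F(B)$ stage by stage, with the key inductive invariant being that the partial copy ($C_k$, the paper's $Y_k$) is $I$-closed in $F_k(B)$, and both derive $A=B\cap C$ and the absence of incidences between $C\setminus A$ and $B\setminus A$ from the fact that each new element is incident only to its specifying tuple. The only cosmetic difference is that you carry the isomorphism $\phi_k\colon F_k(A)\to C_k$ along the induction, whereas the paper first defines the sets $Y_k$ and verifies the isomorphism with $F(A)$ afterwards.
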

\begin{proof}
Given $X,Y\seq F(B)$, let $N_X(Y)$ be the set of elements in $X$ incident to all elements in $Y$. We may write $F(B)$ as an increasing union $\bigcup_{k\in \omega}X_k$ such that $X_0=B$ and, for all $k\in \omega$, if $\Sigma_k$ is the collection of $m$-element sets $\sigma$ of points in $X_k$ such that $|N_{X_k}(\sigma)|\leq n-2$ and $n$-element sets $\sigma$ of lines in $X_k$ such that $|N_{X_k}(\sigma)|\leq m-2$, then there is a bijection $f_k\colon \Sigma_k\to X_{k+1}\backslash X_k$ such that $\sigma=N_{X_{k+1}}(\{f_k(\sigma)\})$ for all $\sigma\in \Sigma_k$. Define a sequence of subsets $Y_k\seq X_k$ so that $Y_0=A$ and
\[
Y_{k+1}=Y_k\cup \{f_k(\sigma) \mid \sigma\in\Sigma_k,~\sigma\seq Y_k\}.
\]
By induction, we show that $Y_k$ is $I$-closed in $X_k$. The base case $k=0$ is by the assumption that $A$ is $I$-closed in $B$. So assume $Y_k$ is $I$-closed in $X_k$ and suppose $\sigma\seq Y_{k+1}$ is an $m$-element set of points. We want to show $N_{X_{k+1}}(\sigma)\seq Y_{k+1}$. Suppose first that $\sigma\seq Y_k$. Then $N_{X_{k+1}}(\sigma)$ is contained in $N_{X_k}(\sigma)$ together with at most one point of the form $f_k(\sigma)\in X_{k+1}\backslash X_k$. Since $N_{X_k}(\sigma)\seq Y_k$ by induction, and $f_k(\sigma)\in Y_{k+1}$, we have $N_{X_{k+1}}(\sigma)\seq Y_{k+1}$. Now suppose $\sigma$ contains at least one element of $Y_{k+1}\backslash Y_k$. Hence there is $\tau\in\Sigma_k$ such that $\tau\seq Y_k$ and $f_k(\tau)\in\sigma$. Then $N_{X_{k+1}}(\sigma)\seq N_{X_{k+1}}(\{f_k(\tau)\})=\tau\seq Y_k\seq Y_{k+1}$.  By a symmetric argument, we conclude that $Y_{k+1}$ is $I$-closed in $X_{k+1}$.

For $k<\omega$, let $\Sigma^*_k=\{\sigma\in\Sigma_k \mid \sigma\seq Y_k\}$. Then $f_k\colon \Sigma^*_k\to Y_{k+1}\backslash Y_k$ is a bijection. Moreover, since $Y_k$ is $I$-closed in $X_k$, $\Sigma^*_k$ is precisely the collection of $m$-element sets $\sigma$ of points in $Y_k$ such that $|N_{Y_k}(\sigma)|\leq n-2$ and $n$-element sets $\sigma$ of lines in $Y_k$ such that $|N_{Y_k}(\sigma)|\leq m-2$. For any $\sigma\in\Sigma^*_k$, we have $N_{Y_{k+1}}(f_k(\sigma))\seq N_{X_{k+1}}(f_k(\sigma))=\sigma\seq Y_k$, and so $\sigma=N_{Y_{k+1}}(f_k(\sigma))$. Altogether, it follows that $F(A)$ is isomorphic to $C:=\bigcup_{k \in \omega}Y_k\seq F(B)$ (via an isomorphism extending inclusion on $A$). By construction, we have $A=B\cap C$. To finish the proof, we fix $u\in C\backslash A$ and $v\in B$, such that $u$ and $v$ are incident, and we show $v\in A$. By construction, $u=f_k(\sigma)$ for some $k\in\omega$ and $\sigma\in\Sigma_k$. Since $v\in B\seq X_{k+1}$ and $N_{X_{k+1}}(\{u\}) = \sigma$, it follows that $v\in \sigma\seq C$ and so $v\in B\cap C=A$. 
\end{proof}

The next proposition lists some properties of $\ter{\otimes}$. Other than full existence, these properties will not be crucial for the main results of this section, and so we only sketch the proof. 

\begin{proposition}\label{prop:stationary}$~$
\begin{enumerate}[$(a)$]
\item In $T_{m,n}$, $\ter{\otimes}$ satisfies monotonicity, transitivity, full existence, and stationarity.
\item In $T_{m,n}$, $\ter{\otimes}\Rightarrow\trt{f}\Rightarrow\trt{d}\Rightarrow\iind\Rightarrow\trt{a}$.
\end{enumerate}
\end{proposition}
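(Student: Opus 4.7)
My overall strategy is to read $A\ter{\otimes}_C B$ as the assertion that $\acl(ABC)$ is canonically the free completion of the disjoint amalgam of $\acl(AC)$ and $\acl(BC)$ over $\acl(C)$, and to verify each axiom at the level of that amalgam. I pass freely between $A,B,C$ and their algebraic closures, which does not affect $\ter{\otimes}$. Part $(b)$ will then follow from the trivial $\iind\Rightarrow\trt{a}$, from Corollary~\ref{cor:div} giving $\trt{d}\Rightarrow\iind$, from $\trt{f}\Rightarrow\trt{d}$ by definition, and from a standard non-forking argument deriving $\ter{\otimes}\Rightarrow\trt{f}$ from the axioms established in $(a)$.

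For \emph{full existence}, I form the disjoint amalgam $D$ of $\acl(AC)$ and $\acl(BC)$ over $\acl(C)$ as in the proof of Proposition~\ref{prop:amalgamation} (a $T^p_{m,n}$-model with no cross-incidences outside $\acl(C)$), take its free completion $F(D)\models T^c_{m,n}$ (Corollary~\ref{cor:free-closure}), and embed $F(D)$ into $\M$ over $\acl(BC)$ via Proposition~\ref{prop:extend}. The image $A'$ of $A$ satisfies $A'\equiv_C A$ by Corollary~\ref{cor:extend}, and $\acl(A'BC)$ coincides with the image of $F(D)$, so $A'\ter{\otimes}_C B$. \emph{Stationarity} is then immediate from Corollary~\ref{cor:acltype}: in an $\ter{\otimes}$-independent configuration, $\acl(ABC)$ is canonically $F(\acl(AC)\cup\acl(BC))$ over $BC$, a structure determined up to isomorphism by $\tp(A/C)$ and $\tp(B/C)$. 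For \emph{monotonicity}, given $A'\seq A$ and $B'\seq B$, algebraic independence $A'\iind_C B'$ is inherited, and $E := \acl(A'C)\cup\acl(B'C)$ is $I$-closed inside the disjoint amalgam $M := \acl(AC)\cup\acl(BC)$: any $m$ points (or $n$ lines) of $E$ sharing an incidence in $M$ either lie entirely in $\acl(C)\seq E$ or entirely in one side of the amalgam, whose $I$-closure is contained in $E$ (using $E\cap\acl(AC) = \acl(A'C)$ by algebraic independence, and symmetrically). Applying Lemma~\ref{lem:extF} to $M$ and $E$ then produces a copy of $F(E)$ inside $F(M) = \acl(ABC)$ equal to the $I$-closure of $E$, which coincides with $\acl(A'B'C)$.

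For \emph{transitivity}, suppose $D\seq C\seq B$ with $A\ter{\otimes}_D C$ and $A\ter{\otimes}_C B$. Transitivity of $\iind$ (Remark~\ref{rem:KiPi}) gives $A\iind_D B$. Composing the two $\ter{\otimes}$ hypotheses,
\[
\acl(AB)\cong_{ABD} F\bigl(F(\acl(AD)\cup\acl(C))\cup\acl(B)\bigr),
\]
and I must identify this with $F(\acl(AD)\cup\acl(B))$ over $\acl(AD)\cup\acl(B)$. Since $\acl(C)\seq\acl(B)$, both sides are $T^c_{m,n}$-models generated by the disjoint amalgam of $\acl(AD)$ and $\acl(B)$ over $\acl(D)$: the inner $F$ introduces no new incidences between $\acl(AD)\setminus\acl(D)$ and $\acl(B)\setminus\acl(D)$ by the no-cross-incidences clause of Lemma~\ref{lem:extF}, so the common generating substructure agrees on both sides. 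The universal property of $F$ as the smallest $T^c_{m,n}$-model containing its generators (Proposition~\ref{prop:Iacl}) then yields the desired isomorphism.

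For part $(b)$, the only nontrivial implication is $\ter{\otimes}\Rightarrow\trt{f}$. Given $A\ter{\otimes}_C B$ and $\widehat{B}\supseteq BC$, I use full existence to find $A'\equiv_C A$ with $A'\ter{\otimes}_C\widehat{B}$, whence $A'\ter{\otimes}_C B$ by monotonicity and $A'\equiv_{BC} A$ by stationarity. Invariance of $\ter{\otimes}$ under $\Aut(\M/\acl(C))$, combined with full existence and stationarity, yields an $\Aut(\M/\acl(C))$-invariant global extension of $\tp(A/C)$ containing $\tp(A'/\widehat{B})$; such a type does not divide over $C$, so $A'\trt{d}_C\widehat{B}$ and hence $A\trt{f}_C B$. \emph{Main obstacle.} The transitivity step is the subtlest point: one must carefully track the substructure on $\acl(AD)\cup\acl(B)$ through the iterated free completion, and invoking the no-cross-incidences conclusion of Lemma~\ref{lem:extF} to see that the inner $F$ does not disturb this substructure is essential.
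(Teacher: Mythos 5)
Your treatment of full existence, stationarity, and monotonicity runs along essentially the same lines as the paper's (deliberately brief) proof: full existence as in Remark~\ref{rem:FE}, stationarity from the fact that an $\ter{\otimes}$-independent configuration has determined isomorphism type, and monotonicity via Lemma~\ref{lem:extF}; and in part~$(b)$ you re-derive, rather than cite, the criterion the paper imports from \cite[Theorem 4.1]{conant-terry} (an invariant relation with monotonicity, full existence and stationarity implies $\trt{f}$), which is a legitimate alternative route. Two small remarks: in the monotonicity step your dichotomy for $I$-closedness of $E$ in $M$ is not exhaustive --- the $m$ points may be split between the two sides, in which case the common line is forced into $\acl(C)\seq E$, so the conclusion survives but the stated case analysis does not; and stationarity (in your write-up and in the paper's axiom as literally stated) should be read over algebraically closed bases, since over an arbitrary $C$ it can fail for trivial reasons when $\acl(C)\neq\dcl(C)$.

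The genuine gap is in transitivity, at exactly the point you flag as the main obstacle. Having checked that both $F\bigl(F(\acl(AD)\acl(C))\acl(B)\bigr)$ and $F(\acl(AD)\acl(B))$ are models of $T^c_{m,n}$ generated by (isomorphic copies of) the disjoint amalgam of $\acl(AD)$ and $\acl(B)$ over $\acl(D)$, you conclude that they are isomorphic over that amalgam by ``the universal property of $F$ as the smallest $T^c_{m,n}$-model containing its generators (Proposition~\ref{prop:Iacl}).'' No such universal property exists: Proposition~\ref{prop:Iacl} only says that $\acl(X)$ is the smallest submodel of $\M$ containing $X$ \emph{inside} $\M$; it does not say that any two models of $T^c_{m,n}$ generated by the same configuration are isomorphic over it, and Lemma~\ref{lem:notfree} shows that this is false in general --- a projective plane can be generated by $A$ without being isomorphic to $F(A)$ over $A$, which is precisely what drives Theorem~\ref{thm:primemodel}$(b)$. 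So the crucial identification of the two-step free completion with the one-step one is unproved, and it is exactly the content of transitivity. It can be proved, but it needs real work: for instance, check that $\acl(AD)\cup\acl(C)$ is $I$-closed in the amalgam $\acl(AD)\cup\acl(B)$, use Lemma~\ref{lem:extF} to realize a copy of $F(\acl(AD)\acl(C))$ inside $F(\acl(AD)\acl(B))$ with no stray incidences, and then argue (by reorganizing the stage-by-stage construction of the free completion) that the ambient structure is itself a free completion of the enlarged base $F(\acl(AD)\acl(C))\cup\acl(B)$; this last step is where the substance lies, and it is missing from your proof.
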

\begin{proof}
Part $(a)$. Full existence for $\ter{\otimes}$ can be proven in the same way as full existence for $\iind$ in Remark \ref{rem:FE}, using the proof of Proposition~\ref{prop:amalgamation}, Corollary~\ref{cor:free-closure}, Proposition~\ref{prop:extend}, and Corollary~\ref{cor:extend}. Since $A\ter{\otimes}_C B$, together with the isomorphism types of $\acl(AC)$ and $\acl(BC)$ over $\acl(C)$, completely determines the isomorphism type of $\acl(ABC)$, stationarity follows from Corollary \ref{cor:extend}. Monotonicity and transitivity are  left to the reader (the main tool for the arguments is Lemma \ref{lem:extF}).

Part $(b)$. The only new implication is $\ter{\otimes}\Rightarrow\trt{f}$. In general, any invariant ternary relation satisfying the axioms from part $(a)$ implies $\trt{f}$ (see \cite[Theorem 4.1]{conant-terry}). 
\end{proof}

When $m,n\geq 2$, $T_{m,n}$ is not simple and so $\trt{d}$ must fail transitivity. However, we have the following ``mixed transitivity" statement involving both $\trt{d}$ and $\ter{\otimes}$.

\begin{lemma}\label{lem:mixed}
Suppose $A,B,C,D\subset\M\models T_{m,n}$ with $D\seq C\seq B$. Then
\[
\textstyle A\trt{d}_D C\text{ and }A\ter{\otimes}_C B~\Rightarrow~A\trt{d}_D B.
\]
\end{lemma}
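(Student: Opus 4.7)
Fix a $D$-indiscernible sequence $(B_i)_{i<\omega}$ with $B_0 = B$, and let $C_i$ denote the subtuple of $B_i$ corresponding to $C \seq B$, so that $(C_i)_{i<\omega}$ is a $D$-indiscernible sequence with $C_0 = C$. The goal is to construct $A'$ such that $A' B_i \equiv_D AB$ for all $i \in \omega$, which will witness $A \trt{d}_D B$.

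The construction proceeds in two steps. First, since $A \trt{d}_D C$, there is $A^*$ with $A^* C_i \equiv_D AC$ for each $i$; by a standard Ramsey extraction I may additionally assume that $(C_i)_{i<\omega}$ is $DA^*$-indiscernible. Second, using full existence for $\ter{\otimes}$ (Proposition~\ref{prop:stationary}(a)), I find $A'$ with $A' \equiv_{D\bigcup_i C_i} A^*$ and $A' \ter{\otimes}_{D\bigcup_i C_i} \bigcup_i B_i$. In particular $A' C_i \equiv_D AC$ for every $i$.

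The decisive remaining step is to show $A' B_i \equiv_D AB$ for each $i$. The plan is to establish the localized freeness $A' \ter{\otimes}_{C_i} B_i$. Granting this, I combine it with the hypothesis $A \ter{\otimes}_C B$ via the isomorphism $AC \to A' C_i$ fixing $D$: this extends canonically to an isomorphism $\acl(DAB) \to \acl(DA' B'_i)$ (fixing $D$) where $B'_i \supseteq C_i$ realizes the pushforward of $\tp(B/C)$ over $C_i$. But $B_i$ is another such realization, so $B_i \equiv_{C_i} B'_i$, and both satisfy $A' \ter{\otimes}_{C_i}(-)$; stationarity of $\ter{\otimes}$ (together with symmetry, which is immediate from the definition) then forces $A' B_i \equiv_{C_i} A' B'_i$, and hence $A' B_i \equiv_D AB$ via Corollary~\ref{cor:acltype}.

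The main obstacle is thus the localization $A' \ter{\otimes}_{D\bigcup_j C_j} \bigcup_j B_j \Rightarrow A' \ter{\otimes}_{C_i} B_i$. The $\iind$-component $A' \iind_{C_i} B_i$ should follow from the ambient $\iind$-independence together with the $DA^*$-indiscernibility of $(C_j)$, which prevents the $C_j$ and $B_j$ for $j \neq i$ from contributing algebraic overlaps or new incidences between $\acl(DA'C_i)$ and $\acl(DB_i)$ beyond $\acl(C_i)$. For the free-completion identity $\acl(DA'B_i) = F(\acl(DA'C_i) \cup_{\acl(C_i)} \acl(DB_i))$, I apply Lemma~\ref{lem:extF}: inside the ambient amalgam $\acl(DA'\bigcup_j C_j) \cup \acl(D\bigcup_j B_j)$, whose free completion is $\acl(DA'\bigcup_j B_j)$, the local amalgam $\acl(DA'C_i) \cup \acl(DB_i)$ sits as an $I$-closed substructure, and Lemma~\ref{lem:extF} realizes its free completion as an $I$-closed substructure of the ambient free completion, which one then identifies with $\acl(DA'B_i)$. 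Verifying this combinatorial restriction, carefully tracking how the $j \neq i$ strands fail to contribute to the $i$-th local piece, is the technical heart of the argument.
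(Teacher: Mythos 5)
Your overall strategy is genuinely different from the paper's: you argue directly from the definition of dividing, fixing a $D$-indiscernible sequence $(B_i)$ and trying to produce a single $A'$ via full existence and stationarity of $\ter{\otimes}$, whereas the paper never touches indiscernible sequences here. Instead it invokes Corollary~\ref{cor:div} (proved just before this lemma) to reduce $A\trt{d}_D B$ to showing $A\iind_E B$ for every $E$ with $D\seq E\seq B$, and then verifies each such instance by combining transitivity of $\iind$ (Remark~\ref{rem:KiPi}) with an explicit $I$-closedness claim ($\acl(AE_0)E$ is $I$-closed in $\acl(AC)B$, where $E_0=C\cap E$) fed into Lemma~\ref{lem:extF}. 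Your outer frame (stationarity plus Corollary~\ref{cor:acltype} to convert a localized $A'\ter{\otimes}_{C_i}B_i$ into $A'B_i\equiv_D AB$) is fine; the problem is the localization itself.

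That localization, $A'\ter{\otimes}_{D\bigcup_j C_j}\bigcup_j B_j\Rightarrow A'\ter{\otimes}_{C_i}B_i$, is a base-\emph{shrinking} step, and nothing you have established supports it: monotonicity of $\ter{\otimes}$ keeps the base fixed, base monotonicity (which fails anyway for $\iind$) goes the other direction, and a transitivity-style argument would require $A'\iind_{C_i} D\bigcup_j C_j$ and control of $\acl(B_i)\cap\acl(D\bigcup_j C_j)$ over $\acl(C_i)$ --- neither of which your construction provides. Dividing only gives you \emph{some} witness $A^*$ with $A^*C_j\equiv_D AC$; it says nothing about $\acl(A^*C_i)\cap\acl(D\bigcup_j C_j)$, and the $DA^*$-indiscernibility of $(C_j)$ and the choice of $A'$ by full existence over the big base $E=\acl(D\bigcup_j C_j)$ only control intersections and incidences down to $E$, not down to $\acl(C_i)$. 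Concretely, already the $\trt{a}$-component of $A'\iind_{C_i}B_i$ can fail: in $T_{2,2}$, the points $C_j$ could all lie on a common line $\ell$, so $\ell\in E$, while $\ell$ also passes through a point of $B_i\setminus C_i$, so $\ell\in\acl(B_i)\setminus\acl(C_i)$; if the witness $A^*$ happens to put an element of $\acl(A^*C_i)$ on or equal to $\ell$, then $A'\niind_{C_i}B_i$, and your hypotheses do not exclude such a witness. The same missing independences are exactly what you would need to prove the $I$-closedness of the local amalgam inside the ambient one before applying Lemma~\ref{lem:extF}; in the paper's proof the analogous $I$-closedness is obtained only because $A\trt{d}_D C$ has already been converted, via Corollary~\ref{cor:div}, into $A\iind_{E_0}C$ for the relevant intermediate base $E_0$, and then combined with $A\iind_C B$ by transitivity. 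So the step you flag as ``the technical heart'' is not merely unverified: as set up, it is not recoverable from your configuration, and the natural repair is to abandon the sequence-based approach and argue through Corollary~\ref{cor:div}, which is essentially the paper's proof.
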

\begin{proof}
We may  assume $B,C,D$ are algebraically closed. To show $A\trt{d}_D B$, it suffices by Corollary \ref{cor:div} to fix $E$ such that $D\seq E\seq B$, and show $A\iind_E B$. We may assume $E$ is algebraically closed. We first analyze $\acl(AE)$.

Let $E_0=C\cap E$. We claim that $X:=\acl(AE_0)E$ is $I$-closed in $Y:=\acl(AC)B$. Indeed, suppose $\sigma$ is a set of $m$ points in $X$ such that there is a line $y\in Y$ incident to every point in $\sigma$.  We have $A\trt{d}_D C$ and $D\seq E_0\seq C$, so Corollary \ref{cor:div} implies $A\iind_{E_0} C$. We also have $A\iind_C B$ by definition of $\ter{\otimes}$. By transitivity for $\iind$ (Remark \ref{rem:KiPi}), it follows that $A\iind_{E_0} B$. For a contradiction, suppose $y\not\in X$. Since $\acl(AE_0)$ and $E$ are each algebraically closed, it follows there are points $x_1,x_2\in\sigma$ such that $x_1\in \acl(AE_0)\backslash E_0$ and $x_2\in E\backslash E_0$ (note $\acl(AE_0)\cap E=E_0$ since $A\iind_{E_0}B$). Since $x_1\in\acl(AE_0)\backslash E_0$, $y\not\in E_0$, $I(x_1,y)$ holds, and $A\iind_{E_0}B$, it follows that $y\not\in B$. So $y\in\acl(AC)$. Then $I(x_2,y)$, $A\iind_C B$, and $x_2\in E\backslash E_0$  imply $y\in C$. But then $I(x_1,y)$, $A\iind_{E_0} C$, and $x_1\not\in E_0$ imply $y\in E_0\seq X$, which is a contradiction. By a symmetric argument, we see that $X$ is $I$-closed in $Y$.

Since $A\ter{\otimes}_C B$, we have $\acl(ABC)\cong F(Y)$ over $ABC$. So by Lemma \ref{lem:extF}, $\acl(X)\cong F(X)$, $\acl(X)\cap Y=X$, and there are no incidences between $\acl(X)\backslash X$ and $Y\backslash X$. Using this, we show $A\iind_E B$. Suppose $u\in\acl(AE)$ and $v\in B$ are either incident or equal. We want to show $u\in E$ or $v\in E$. Suppose first that $u\in X$. Then we may assume $u\in\acl(AE_0)$. Since $A\iind_{E_0} B$ we either have $u\in E_0\seq E$ or $v\in E_0\seq E$. Now suppose $u\in\acl(X)\backslash X$. Since $v\in B\seq Y$, we have $v\in X$, and we may again assume $v\in\acl(AE_0)$. So $v\in\acl(AE_0)\cap B=E_0\seq E$. 
 \end{proof}

\begin{corollary}\label{cor:f=d}
For any $A,B,C\subset\M\models T_{m,n}$, $A\trt{f}_C B$ if and only if $A\trt{d}_C B$ if and only if for all $D$ such that $C\seq D\seq\acl(BC)$, $A\iind_D B$.
\end{corollary}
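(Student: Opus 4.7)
The plan is to establish the chain of equivalences, noting that the second equivalence — $A\trt{d}_C B$ iff $A\iind_D B$ for all $D$ with $C\seq D\seq \acl(BC)$ — is exactly Corollary~\ref{cor:div}, so no new work is needed there. The content of the corollary is therefore the first equivalence, $\trt{f} = \trt{d}$ in $T_{m,n}$.

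The forward implication $\trt{f}\Rightarrow \trt{d}$ is general and holds in any theory. For the converse, I would prove that dividing satisfies the extension axiom in $T_{m,n}$: given $A\trt{d}_C B$ and any $\widehat{B}\supseteq BC$, I would produce $A'\equiv_{CB} A$ with $A'\trt{d}_C \widehat{B}$, which immediately unwinds to $A\trt{f}_C B$ via the definition of forking. This is the standard strategy for collapsing forking and dividing in generic theories (as used, e.g., in \cite{conant}, \cite{conant-terry}, and \cite{KrRa}), so the proof is short once the necessary ingredients are in place.

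To carry out extension, I would use the auxiliary relation $\ter{\otimes}$. Apply full existence for $\ter{\otimes}$ (Proposition~\ref{prop:stationary}(a)) with base $CB$ to obtain $A'\equiv_{CB} A$ with $A'\ter{\otimes}_{CB}\widehat{B}$. By invariance of $\trt{d}$ and the hypothesis $A\trt{d}_C B$, we have $A'\trt{d}_C B$, hence trivially $A'\trt{d}_C CB$. Then apply mixed transitivity (Lemma~\ref{lem:mixed}) with base $C$, middle $CB$, and top $B\widehat{B}$: from $A'\trt{d}_C CB$ and $A'\ter{\otimes}_{CB} B\widehat{B}$ (the latter using monotonicity of $\ter{\otimes}$ to enlarge $\widehat{B}$ to $B\widehat{B}$), we conclude $A'\trt{d}_C B\widehat{B}$, as required.

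The main obstacle has already been overcome by the preceding lemmas: constructing $\ter{\otimes}$ via the free completion and establishing both full existence for it and the mixed transitivity statement Lemma~\ref{lem:mixed}. Once these are available, the extension argument for $\trt{d}$ is a three-line manipulation, and the equivalence $\trt{f}=\trt{d}$ follows because $\trt{f}$ is, by definition, the smallest invariant relation extending $\trt{d}$ that satisfies extension.
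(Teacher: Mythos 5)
Your proposal is correct and matches the paper's argument essentially verbatim: the second equivalence is quoted from Corollary~\ref{cor:div}, and $\trt{d}\Rightarrow\trt{f}$ is proved by using full existence for $\ter{\otimes}$ over the base $BC$ and then the mixed transitivity of Lemma~\ref{lem:mixed} to get extension for dividing. (Your enlargement of $\widehat{B}$ to $B\widehat{B}$ is vacuous since $\widehat{B}\supseteq BC$, but harmless.)
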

\begin{proof}
The second equivalence just repeats Corollary \ref{cor:div}. So we need to show $\trt{d}\Rightarrow \trt{f}$. Fix $A,B,C$ such that $A\trt{d}_C B$, and suppose $\widehat{B}\supseteq BC$. By \emph{full existence} for $\ter{\otimes}$, there is $A'\equiv_{BC} A$ such that $A'\ter{\otimes}_{BC}\widehat{B}$. Since $A'\trt{d}_C BC$, we have $A'\trt{d}_C\widehat{B}$ by Lemma \ref{lem:mixed}.
\end{proof}

Corollary~\ref{cor:f=d} establishes that forking and dividing in $T_{m,n}$ coincide at the level of complete types. We now observe that they differ at the level of formulas whenever $T_{m,n}$ is not simple. 

\begin{proposition}\label{prop:forkingdividing}
For $m,n\geq 2$, there is a formula which forks but does not divide in $T_{m,n}$.
\end{proposition}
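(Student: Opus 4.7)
The plan is to use the TP$_2$ witness $\varphi(x_1,x_2;y_1,\ldots,y_n)$ from Proposition \ref{prop:TP2} to construct a formula that forks but does not divide, adapting the standard TP$_2$-based argument that any theory with TP$_2$ admits such a formula.

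First, I would apply Ramsey's theorem and compactness to strengthen the TP$_2$ array $\{b^j_i\overline{c}^j\mid i,j<\omega\}$ of Proposition \ref{prop:TP2} so that its rows are mutually $\emptyset$-indiscernible, while preserving the $r$-inconsistency of each row's $\varphi$-instances and the consistency of every path $\{\varphi(x_1,x_2;b^j_{\sigma(j)},\overline{c}^j)\}_{j<\omega}$ for $\sigma\colon\omega\to\omega$. The candidate formula is a disjunction
\[
\psi(x_1,x_2;\bar z_1,\ldots,\bar z_N) := \bigvee_{j=1}^N \varphi(x_1,x_2;\bar z_j),
\]
for a suitably chosen $N$, instantiated at the parameters $\bar z_j = (b^j_0,\overline{c}^j)$ drawn from the first column of $N$ distinct rows. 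This $\psi$ then forks over $\emptyset$ trivially, as a disjunction of $N$ formulas each of which divides over $\emptyset$ (witnessed by the corresponding row of the array).

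The hard part will be showing $\psi$ does not divide over $\emptyset$: for every $\emptyset$-indiscernible sequence $(\bar d_s)_{s<\omega}$ in $\tp((\bar z_1,\ldots,\bar z_N)/\emptyset)$, the set $\{\psi(x_1,x_2;\bar d_s)\}_{s<\omega}$ must be consistent. The idea is that mutual row-indiscernibility lets one arrange such a sequence into an $N\times\omega$ sub-array of a TP$_2$-like configuration in a suitably saturated extension, and then the consistency of every path through that array, transferred via the disjoint amalgamation of Proposition \ref{prop:amalgamation} and full existence for $\iind$ (Remark \ref{rem:FE}), produces a common realization of the $\psi$-instances. The critical subtlety, which I expect to be the main obstacle, is that a naive pigeonhole argument on the $N$ disjuncts would force the witness into a single row, where the $\varphi$-instances are $r$-inconsistent and hence inconsistent; so the non-dividing argument must exploit the cross-row ``path'' structure essentially, and $N$ must be chosen carefully relative to $r$ (and the arity of the tuple $(x_1,x_2)$) to block the pigeonhole collapse. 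The combinatorial flexibility of $T_{m,n}$ encoded in the axioms of Theorem \ref{thm:modelcompanion}, together with the characterization of dividing via $\iind$ over varying base sets (Corollary \ref{cor:div}), should supply exactly what is needed to carry this through.
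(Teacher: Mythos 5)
Your construction does not work: the disjunction $\psi(x_1,x_2;\bar z_1,\ldots,\bar z_N)=\bigvee_{j=1}^N\varphi(x_1,x_2;\bar z_j)$ with $\bar z_j$ taken from $N$ distinct rows of the TP$_2$ array genuinely \emph{divides} over $\emptyset$, for every choice of $N$. Once you arrange the rows to be mutually $\emptyset$-indiscernible (as you yourself propose), the sequence of \emph{columns} $\bar e_s=(b^1_s\overline{c}^1,\ldots,b^N_s\overline{c}^N)$ is an $\emptyset$-indiscernible sequence whose first term is exactly the parameter tuple $(\bar z_1,\ldots,\bar z_N)$: mutual indiscernibility lets you replace the indices in row $1$, then row $2$, etc., one row at a time, so all finite increasing blocks of columns have the same type. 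Along this sequence, any realization of $N(r-1)+1$ instances of $\psi$ must, by pigeonhole, realize $r$ instances of $\varphi$ lying in a single row, which is impossible since each row's instances are $r$-inconsistent (and this $r$-inconsistency is part of the finitary data preserved by the extraction). Dividing only requires \emph{one} bad indiscernible sequence, so no careful choice of $N$ relative to $r$, and no appeal to consistency of paths $\sigma\colon\omega\to\omega$, can rescue the formula: path-consistency is irrelevant to dividing, which quantifies over indiscernible sequences in the type of the whole parameter tuple, and the column sequence is precisely such a sequence. Relatedly, there is no ``standard TP$_2$-based argument'' that a theory with TP$_2$ has a forking, non-dividing formula; you cannot outsource the hard part to such a principle.

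The missing idea, which is how the paper proceeds, is to disjoin a \emph{single} dividing instance $\varphi(x_1,x_2;b,\overline{c})$ with an \emph{algebraic} dividing formula, namely $\theta(x_1,x_2;b,\overline{c}):=\varphi(x_1,x_2;b,\overline{c})\lor x_1=b\lor x_1=c_1$. This forks for the same trivial reason as your $\psi$. For non-dividing one takes an arbitrary $\emptyset$-indiscernible sequence $(b^i,\overline{c}^i)_{i\in\omega}$ starting at $(b,\overline{c})$ and splits into cases according to the subtuple $\overline{d}$ that is constant along the sequence: if $\overline{d}$ contains $b$ or $c_1$, the algebraic disjunct is realized simultaneously along the whole sequence; otherwise the sequence is $\trt{a}$-independent over the algebraically closed set $\overline{d}$, one checks $a_1a_2\iind_{\overline{d}}b\overline{c}$ directly from the configuration $\Gamma$, and Proposition~\ref{prop:Kequiv} yields a common realization of all the $\varphi$-instances. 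Your proposal never isolates the constant part of the indiscernible sequence, never uses an algebraic disjunct to absorb the case where parameters are partially fixed, and consequently cannot avoid the pigeonhole collapse you correctly identified as the obstacle.
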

\begin{proof}
Let $\varphi(x_1,x_2;y_1,\dots,y_n)$ be the formula from Proposition~\ref{prop:TP2}, and let $a_1,a_2,b,\overline{c}\in \mathbb{M}$ be such that $\varphi(a_1,a_2;b,\overline{c})$ holds. It follows from the proof of that proposition that $\varphi(x_1,x_2;b,\overline{c})$ divides. Note also that the formulas $x=b$ and $x=c_1$ each divide. So the formula $\theta(x_1,x_2;b,\overline{c}):=(\varphi(x_1,x_2;b,\overline{c})\vee (x_1=b \lor x_1=c_1))$ forks. 

We claim that $\theta(x_1,x_2;b,\overline{c})$ does not divide. Indeed, let $(b^i,\overline{c}^i)_{i\in\omega}$ be an indiscernible sequence with $b^0 = b$ and $\overline{c}^0 = \overline{c}$. Let $\overline{d}$ be the subtuple of ${b}\overline{c}$ which is constant along the indiscernible sequence, so the sequence $(b^i,\overline{c}^i)_{i\in\omega}$ is $\trt{a}$-independent over $\overline{d}$. If $\overline{d}$ contains $b$ or $c_1$ then $\{(x_1=b \lor x_1=c_1)\mid i\in\omega\}$ is consistent, since either $b^i=b^0$ for all $i\in\omega$ or $c^i_1=c^0_1$ for all $i\in\omega$. So we may assume $\overline{d}$ does not contain $b$ or $c_1$. In this case, we note that $a_1a_2\iind_{\overline{d}}b\overline{c}$ (since both $a_1a_2\overline{d}$ and $b\overline{c}$ are algebraically closed, intersect in $\overline{d}$, and have no incidences between), and $\overline{d}$ is algebraically closed, so $\{\varphi(x_1,x_2;b^i,\overline{c}^i)\mid i\in \omega\}$ is consistent by Proposition~\ref{prop:Kequiv}.
\end{proof}

The result of Proposition~\ref{prop:forkingdividing} can be easily adjusted to take place over any set of parameters, instead of over the empty set. The equivalence of forking and dividing for complete types, but not for formulas, is yet another phenomenon that $T_{m,n}$ shares with the Henson graphs \cite{conant} and many NSOP$_1$ examples from \cite{KrRa}. On the other hand, there is an interesting difference between the present situation and that of the Henson graphs (which are not NSOP$_1$). Specifically, the forking and non-dividing formula in Proposition \ref{prop:forkingdividing} is obtained as the disjunction of a dividing formula and an \emph{algebraic} dividing formula. This cannot happen in the Henson graphs.

\subsection{Elimination of imaginaries and thorn-forking}\label{subsec:ei}

Recall that a theory $T$ has \emph{elimination of imaginaries} if for every imaginary $e\in \mathbb{M}^{\eq}$, there exists a real tuple $a\in \mathbb{M}$ such that $e\in \dcl^{\eq}(a)$ and $a\in \dcl^{\eq}(e)$. $T$ has \emph{weak elimination of imaginaries} if for every imaginary $e\in \mathbb{M}^{\eq}\models T^{\eq}$, there exists a real tuple $a\in \mathbb{M}$ such that $e\in \dcl^{\eq}(a)$ and $a\in \acl^{\eq}(e)$.

\begin{remark}
For all $m$ and $n$, $T_{m,n}$ fails to code pairs, and hence does not have elimination of imaginaries. Let $\sim$ be the definable equivalence relation on $\mathbb{M}^2$ given by $(a,b)\sim (a',b')$ if and only if $\{a,b\} = \{a',b'\}$. 

First, assume we are not in the case $m = n = 2$. Then, by symmetry, we may assume $m\geq n$, so we may assume $m\geq 3$, or $m \leq 2$ and $n = 1$. In any of these cases, for any pair of distinct points $a$ and $b$, $\acl(ab) = \dcl(ab) = \{a,b\}$, so $\tp(ab) = \tp(ba)$ by Corollary~\ref{cor:acltype}. Thus there is an automorphism $\sigma$ of $\mathbb{M}$ swapping $a$ and $b$, and $\sigma$ fixes the imaginary $e = [(a,b)]_{\sim}$. But any real tuple definable from $ab$ must be a subtuple of $(a,b)$, and only the empty subtuple is fixed by $\sigma$. The empty tuple is not interdefinable with $e$, since $e$ is not fixed by all automorphisms of $\mathbb{M}$. So $T_{m,n}$ does not eliminate $\sim$.

Now consider the case $m = n = 2$. Let $a$ and $b$ be distinct points, with connecting line $\br$. Then $\acl(ab) = \dcl(ab) = \{a,b,\br\}$. Again, there is an automorphism $\sigma$ of $\mathbb{M}$ swapping $a$ and $b$ and fixing the imaginary $e = [(a,b)]_{\sim}$. Of course, $\sigma$ also fixes $\br$. A real tuple interdefinable with $e$ must be a subtuple of $(a,b,\br)$  fixed by $\sigma$, and $e$ is not interdefinable with the empty tuple, so the only option is the element $\br$. But now let $\{c,d\}$ be another pair of points on $\br$, distinct from $\{a,b\}$. By Corollary \ref{cor:acltype}, there is an automorphism of $\mathbb{M}$ which fixes $\br$ and moves $(a,b)$ to $(c,d)$ (hence does not fix $e$). So $T_{2,2}$ also fails to eliminate $\sim$.
\end{remark}

We now show $T_{m,n}$ has weak elimination of imaginaries, using a criterion observed by Montenegro and Rideau in \cite{MoRi} (we thank them for allowing us to include the result). This criterion was abstracted from a method used originally by Hrushovski~\cite{hrushovski}, and then in several other contexts (see~\cite{ChaPil} and~\cite{KrRa}), in order to deduce weak elimination of imaginaries from an independence theorem.

\begin{proposition}\label{prop:silvain}\textnormal{\cite[Proposition 1.17]{MoRi}}
Suppose there is a ternary relation $\gind$ on $\mathbb{M}\models T$, satisfying the following properties:
\begin{enumerate}[$(i)$]
\item Given $a,b\in \mathbb{M}$ and $C^* = \acl^{\eq}(C^*) \subset \mathbb{M}^{\eq}$, and letting $C = C^*\cap \mathbb{M}$, there exists $a'\equiv_{C^*} a$ such that $a'\gind_C b$.
\item Given $a,b,c\in \mathbb{M}$ and $C=\acl(C)\subset \mathbb{M}$ such that $a\equiv_C b$, $b\gind_C a$, and $c\gind_C a$, there exists $c'$ such that $c'a \equiv_C c'b\equiv_C ca$.
\end{enumerate}
Then $T$ has weak elimination of imaginaries.
\end{proposition}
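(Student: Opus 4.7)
The plan is to reformulate weak elimination of imaginaries as the statement: for every $e \in \M^{\eq}$, setting $C = \acl^{\eq}(e) \cap \M$, one has $e \in \dcl^{\eq}(C)$. I would proceed by contradiction, writing $e = f(b)$ for a real tuple $b$ and an $\emptyset$-definable function $f$, and supposing there is $b_0 \equiv_C b$ with $e_0 := f(b_0) \neq e$. Any $C$-automorphism sending $b$ to $b_0$ sends $e$ to $e_0$, so $\acl^{\eq}(e_0) \cap \M = C$ as well; also $C = \acl(C)$, since the $\acl$ of any set of reals contained in $\acl^{\eq}(e)$ stays inside $\acl^{\eq}(e)$.

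The plan is then to use (i) three times to stage a configuration. First, with $C^* = \acl^{\eq}(e)$ applied to the pair $(b, b_0)$, obtain $b_1 \equiv_{C^*} b$ with $b_1 \gind_C b_0$; since $e \in \dcl^{\eq}(C^*)$, this gives $f(b_1) = e$. Second, with $C^* = \acl^{\eq}(e_0)$ applied to $(b_0, b_1)$, obtain $b_2 \equiv_{C^*} b_0$ with $b_2 \gind_C b_1$, so $f(b_2) = e_0$. Third, with $C^* = \acl^{\eq}(e)$ applied to $(b, b_1)$, obtain $c \equiv_{C^*} b$ with $c \gind_C b_1$, so $f(c) = e$. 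All three tuples share the $C$-type of $b$; in particular $b_1 \equiv_C b_2$.

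Now (ii) applies with $(b_1, b_2, c)$ in the roles of $(a, b, c)$, as $b_1 \equiv_C b_2$, $b_2 \gind_C b_1$, and $c \gind_C b_1$ all hold. This produces $c' \in \M$ with $c' b_1 \equiv_C c' b_2 \equiv_C c b_1$. The first equivalence gives a $C$-automorphism fixing $b_1$ and sending $c \mapsto c'$; lifted to $\M^{\eq}$, it fixes $e = f(b_1)$, which forces $f(c') = f(c) = e$. The second equivalence gives a $C$-automorphism $\tau$ sending $(c', b_2)$ to $(c, b_1)$; applying $\tau$ to $f$ yields $\tau(e) = \tau(f(c')) = f(c) = e$ and $\tau(e_0) = \tau(f(b_2)) = f(b_1) = e$. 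Injectivity of $\tau$ then gives $e = e_0$, the desired contradiction.

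The main subtlety I anticipate is the choice of the auxiliary tuple $c$. Naively reusing $b_2$ in that role does not suffice, since the resulting automorphisms only confirm that $e$ and $e_0$ are $C$-conjugate, which is already known. Instead, one must pick $c$ as a fresh realization of the $\acl^{\eq}(e)$-type of $b$ (forcing $f(c) = e$), independent from $b_1$ over $C$; the $c'$ output by (ii) is then simultaneously tied to the value $e$ via its type over $b_1$ and witnesses an automorphism collapsing $e_0$ to $e$ via its type over $b_2$. The one-sided nature of (ii)---that $b$ and $c$ are each independent from $a$, but not assumed mutually independent---is what makes this careful assignment of roles the crucial tactical step.
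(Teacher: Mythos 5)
Your proof is correct and follows essentially the same route as the paper's: both use $(i)$ to produce $\gind_C$-independent realizations of types over the relevant $\acl^{\eq}$-closures and then apply $(ii)$ with exactly the same assignment of roles (a pivot with $f$-value $e$, a copy of the conjugate tuple, and a fresh copy of $a$), concluding that the conjugate imaginary equals $e$. The only differences are cosmetic: the paper argues directly that every $\sigma\in\Aut(\M^{\eq}/C)$ fixes $e$, using only the base $C^*=\acl^{\eq}(e)$ and the fact that $e\in C^*$ to transfer $f(b)=e$ back to $\sigma(a)$, whereas you argue by contradiction via $\acl^{\eq}(e_0)$; moreover your first application of $(i)$ producing $b_1$ is redundant, since the independence $b_1\gind_C b_0$ is never used and $b$ itself could play that role.
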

\begin{proof}
Fix $e\in \mathbb{M}^{\eq}$, and let $C^* = \acl^{\eq}(e)$ and $C = C^*\cap \mathbb{M}$. It suffices to show that $e\in \dcl^{\eq}(C)$, so pick some $\sigma\in \Aut(\mathbb{M}^{\eq}/C)$. We will show that $\sigma$ fixes $e$. 

Pick $a\in\M$ and a $\emptyset$-definable function $f$ such that $f(a)=e$. By $(i)$, there exist $b\equiv_{C^*}\sigma(a)$ and $c\equiv_{C^*} a$ such that $b\gind_C a$ and $c\gind_C a$. Note that $f(c)=e$. Since $a\equiv_C \sigma(a)\equiv_C b$, we apply $(ii)$ to find $c'$ such that $c'a\equiv_C c'b\equiv_C ca$. Now $f(a)=f(c)$ implies $f(a)=f(c')$ and $f(b)=f(c')$. So $f(b)=e$, which implies $f(\sigma(a))=e$, and thus $\sigma(e)=e$.
\end{proof}

\begin{remark}\label{rem:realalg}
Let $C^*\subseteq \mathbb{M}^{\eq}\models T^{\eq}$, and let $C = C^* \cap \mathbb{M}$. For any $a,b\in \mathbb{M}$, if $a\trt{a}_{C^*} b$ in $\mathbb{M}^{\eq}$ then $a\trt{a}_C b$ in $\mathbb{M}$. Indeed, suppose we have $c\in \mathbb{M}$ such that $c\in \acl(Ca)\cap \acl(Cb)$. Then also $c\in \acl^{\eq}(C^*a) \cap \acl^{\eq}(C^*b)$, so $c\in C^* \cap \mathbb{M} = C$.
\end{remark}

\begin{theorem}\label{thm:WEI}
For all $m$ and $n$, $T_{m,n}$ has weak elimination of imaginaries. 
\end{theorem}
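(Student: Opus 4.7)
The plan is to apply the Montenegro--Rideau criterion (Proposition~\ref{prop:silvain}) with the independence relation $\gind = \iind$. Invariance of $\iind$ is clear from its combinatorial definition, so the task reduces to verifying conditions $(i)$ and $(ii)$ of Proposition~\ref{prop:silvain}.

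Condition $(ii)$ is essentially a reformulation of the independence theorem for Kim independence (Lemma~\ref{lem:IT}). Given $a,b,c \in \M$ and $C = \acl(C) \seq \M$ with $a \equiv_C b$, $b \iind_C a$, and $c \iind_C a$, I would pick $\sigma \in \Aut(\M/C)$ with $\sigma(a) = b$ and set $c_b = \sigma(c)$. Then automorphism-invariance of $\iind$ gives $c_b \iind_C b$, symmetry of $\iind$ (applied to $b \iind_C a$) gives $a \iind_C b$ and hence $a \trt{a}_C b$, and $c \equiv_C c_b$ is immediate. The four hypotheses of Lemma~\ref{lem:IT} are met, producing $c'$ with $c' \equiv_{Ca} c$ and $c' \equiv_{Cb} c_b$. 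Then $c'a \equiv_C ca$, and $c'b \equiv_C c_b b = \sigma(c)\sigma(a) \equiv_C ca$, as desired.

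Condition $(i)$ is the main obstacle: given $a,b \in \M$ and $C^* = \acl^{\eq}(C^*) \seq \M^{\eq}$ with $C = C^* \cap \M$, one must produce $a' \equiv_{C^*} a$ with $a' \iind_C b$. My approach is in two steps. First, apply full existence for $\iind$ over the (automatically algebraically closed) real set $C$, as proved in Remark~\ref{rem:FE}, to obtain $a_0 \equiv_C a$ with $a_0 \iind_C b$. Second, promote $a_0$ so that its type over $C^*$ agrees with that of $a$. The idea is to amalgamate the types $\tp(a/C^*)$ and $\tp(a_0/Cb)$: both restrict to $\tp(a/C)$, and the condition $a_0 \iind_C b$ encodes the compatibility required to realize them simultaneously. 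Concretely, one reduces to finite pieces of $C^*$: each imaginary $e \in C^*$ is represented as $[\overline{d}]_E$ for a $C$-definable equivalence relation $E$ and a real tuple $\overline{d}$, and by Corollary~\ref{cor:acltype} the type $\tp(a/C^*)$ is captured, via automorphisms, by $\cL$-isomorphism data between algebraic closures. An amalgamation argument in the style of the proof of Lemma~\ref{lem:IT}---freely gluing $\acl(Ca)$ to $\acl(Cab)$ over $\acl(Ca_0b)$ and embedding back into $\M$ via Proposition~\ref{prop:extend}---should yield an element $a'$ simultaneously realizing $\tp(a/C^*)$ (because the embedding fixes $\acl(Ca)$ in $C^*$) and satisfying $a' \iind_C b$ (because freeness prevents new incidences and new algebraic intersections between $\acl(Ca')$ and $\acl(Cb)$ over $C$).

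The hard part will be making the amalgamation in Step~(B) preserve the imaginary type over $C^*$; this is where the interaction between $\iind$ and $\trt{a}$ (Lemma~\ref{lem:SFCW}) becomes crucial, because the witness formulas for non-independence encode exactly the incidence-and-algebraic obstructions that the free amalgamation avoids. By compactness and finite character, it suffices to preserve the $C^*$-type on arbitrarily large finite subsets, and each such subset can be captured by a real tuple $\overline{d}$ which can be placed into the amalgamation picture before invoking Proposition~\ref{prop:extend}. Once condition $(i)$ is established in this way, Proposition~\ref{prop:silvain} immediately yields weak elimination of imaginaries.
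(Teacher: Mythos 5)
Your verification of condition $(ii)$ is correct and is exactly how the paper handles it: it is the independence theorem (Lemma~\ref{lem:IT}) applied after transporting $c$ by an automorphism over $C$, using symmetry of $\iind$ and the fact that $\iind$ implies $\trt{a}$.

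Condition $(i)$, however, is the real content of the theorem, and your argument for it has a genuine gap. Full existence over $C$ only produces $a_0\equiv_C a$ with $a_0\iind_C b$; it gives no control whatsoever over $\tp(a_0/C^*)$, and $\tp(a/C)$ generally has many extensions to $C^*$. Your ``promotion'' step is supposed to repair this, but the amalgamation you describe (gluing $\acl(Ca)$ to $\acl(Cab)$ over $\acl(Ca_0b)$ and invoking Proposition~\ref{prop:extend}) is not well defined as stated, and more importantly Proposition~\ref{prop:extend} and Corollary~\ref{cor:extend} only let you fix a \emph{real} substructure pointwise; fixing real elements does not ensure that the resulting element realizes $\tp(a/C^*)$, because the imaginaries in $C^*$ need not be definable from, or fixed by maps fixing, any real tuple you can insert into the picture (indeed $\dcl^{\eq}(C)$ can be strictly smaller than $C^*$ --- this is exactly what makes weak elimination of imaginaries nontrivial here). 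There is no amalgamation lemma in the paper over imaginary bases, and producing one would essentially amount to the statement you are trying to prove. The paper avoids a direct construction entirely: it argues by contradiction, assuming no $a'\equiv_{C^*}a$ satisfies $a'\iind_C b$, so that $a\niind_C b'$ for \emph{every} realization $b'$ of $\tp(b/C^*)$. Taking a long $\trt{a}$-independent (in $\M^{\eq}$) sequence of such realizations, which by Remark~\ref{rem:realalg} is $\trt{a}$-independent over $C$ in $\M$, and applying pigeonhole to the witnessing formulas, Lemma~\ref{lem:SFCW}$(iii)$ forces $a\ntrt{a}_C\overline{c}^{\beta}$ for each of $\kappa$-many blocks $\overline{c}^{\beta}$; a second pigeonhole then contradicts Fact~\ref{fact:SFCWalg}$(iii)$. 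You gesture at Lemma~\ref{lem:SFCW} being ``crucial,'' but you never use it concretely, and as written your Step~(B) does not go through.
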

\begin{proof}
We apply Proposition~\ref{prop:silvain} to our independence relation $\iind$. Since condition $(ii)$ is a special case of the independence theorem over algebraically closed sets (and so follows directly from Lemma~\ref{lem:IT}), it suffices to show that $\iind$ satisfies condition $(i)$. So suppose we have $a,b\in \mathbb{M}$, $C^* = \acl^{\eq}(C^*)\subset \mathbb{M}^{\eq}$, and $C = C^*\cap \mathbb{M}$. We want to find $a'\equiv_{C^*}a$ such that $a'\iind_C b$. In the following argument, when we say that a sequence of tuples in some type over $C^*$ is $\trt{a}$-independent, we mean algebraic independence in the sense of $\M^{\eq}$. 

Let $\kappa$ be a cardinal which is larger than the number of $\cL_C$-formulas. By full existence for $\trt{a}$ (in $\M^{\eq})$, we may let $\overline{b} = (b_\alpha)_{\alpha< \kappa}$ be an $\trt{a}$-independent sequence in $\tp(b/C^*)$, and then let $(\overline{b}\!\,^\beta)_{\beta<\kappa}$ be an $\trt{a}$-independent sequence in $\tp(\overline{b}/C^*)$. By Remark \ref{rem:realalg}, the sequences $(\overline{b}\!\,^\beta)_{\beta<\kappa}$ and $(b^\beta_\alpha)_{\alpha<\kappa}$, for each $\beta<\kappa$, are $\trt{a}$-independent over $C$ in the sense of $\M$. 

Suppose for contradiction that there is no $a'\equiv_{C^*} a$ such that $a'\iind_C b$. Fix $\beta<\kappa$. For any $\alpha<\kappa$, picking an automorphism $\sigma\in \Aut(\mathbb{M}^{\eq}/C^*)$ with $\sigma(b^\beta_\alpha) = b$, we have $\sigma(a) \niind_C b$, so $a\niind_C b^\beta_\alpha$. Lemma~\ref{lem:SFCW} tells us that this failure of independence is witnessed by some $\cL_C$-formula, and by pigeonhole, we may refine $\overline{b}\!\,^\beta$ to an infinite subsequence $\overline{c}^\beta = (b^\beta_{\alpha_i})_{i\in\omega}$ and assume that the same formula $\varphi(x;y)$ witnesses $a\niind_C c^\beta_i$ for all $i\in \omega$. By Lemma~\ref{lem:SFCW}$(iii)$, we have $a\ntrt{a}_C \overline{c}^\beta$. 

Having constructed $\overline{c}^\beta$ for all $\beta<\kappa$, we can refine to a subsequence $(\overline{c}^{\beta_i})_{i\in \omega}$ such that the same $\cL_C$-formula $\psi(x,y)$ witnesses $a\ntrt{a}_C \overline{c}^{\beta_i}$ for all $i\in \omega$ in the sense of Fact~\ref{fact:SFCWalg}. This contradicts Fact~\ref{fact:SFCWalg}$(iii)$, since $(\overline{c}^{\beta_i})_{i\in \omega}$ is $\trt{a}$-independent over $C$.
\end{proof}

Weak elimination of imaginaries for $T_{m,n}$ allows us to analyze thorn-forking. Recall that M-\emph{independence} $\trt{M}$ is defined by forcing base monotonicity on $\trt{a}$, and  \emph{thorn independence} $\trt{\thorn}$ is defined by forcing extension on $\trt{M}$ (see \cite{adler} for details). A complete theory $T$ is called \emph{rosy} if $\trt{\thorn}$ is symmetric \emph{as a relation constructed in $T^{\eq}$}.  If $T$ has weak elimination of imaginaries, then it suffices to consider thorn-forking in $T$. Therefore, in the following discussion we work with $\trt{M}$ and $\trt{\thorn}$ as constructed in $T$.

Before considering thorn independence in $T_{m,n}$, let us recall the situation with some related theories which also have weak elimination of imaginaries. First, in the theory $T_n$ of the generic $K_n$-free graph, algebraic independence satisfies base monotonicity, and thus $\trt{a}=\trt{\thorn}$. So $T_n$ is rosy, and thorn independence is fairly uninteresting since algebraic closure is trivial. Forking independence in $T_n$ is a stronger and more complicated notion (see \cite{conant}). 

Now, given $k\geq 1$, let $T^k$ be the the theory of the ``generic $k$-ary function", i.e., the model completion of the empty theory in a language consisting of a single $k$-ary function symbol. In \cite{KrRa}, $T^k$ is shown to be NSOP$_1$ with weak elimination of imaginaries, and not simple when $k\geq 2$ (arbitrary languages are considered in \cite{KrRa}, but we focus on $T^k$ for simplicity). Like in $T_{m,n}$, Kim independence in $T^k$ can be extended to arbitrary sets, and forking independence, which coincides with dividing independence, is obtained by forcing base monotonicity on Kim independence. Unlike in $T_{m,n}$, Kim independence in $T^k$ coincides with algebraic independence. So $\trt{f}=\trt{\thrn}=\trt{M}$ in $T^k$, and $T^k$ is not rosy when $k\geq 2$. In $T_{m,n}$, a similar situation occurs. 

\begin{proposition}
Given $A,B,C\subset\M\models T_{m,n}$, $A\trt{f}_C B$ if and only if $A\trt{\thorn}_C B$. Thus $T_{m,n}$ is not rosy when $m,n\geq 2$.
\end{proposition}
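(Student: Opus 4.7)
My plan is to prove $\trt{f}=\trt{\thorn}$; non-rosiness of $T_{m,n}$ for $m,n\geq 2$ then follows from the classical equivalence between simplicity and symmetry of $\trt{f}$, combined with the previously established non-simplicity of $T_{m,n}$.

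The inclusion $\trt{f}\Rightarrow\trt{\thorn}$ holds in every theory: given $A\trt{f}_C B$ and any $\widehat{B}\supseteq BC$, extension for $\trt{f}$ yields $A'\equiv_{BC}A$ with $A'\trt{f}_C\widehat{B}$; base monotonicity of $\trt{f}$ then gives $A'\trt{f}_D\widehat{B}$ for every $D$ with $C\seq D\seq\acl(\widehat{B}C)$, and $\trt{f}\Rightarrow\trt{a}$ yields $A'\trt{M}_C\widehat{B}$, hence $A\trt{\thorn}_C B$.

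For the converse I argue by contrapositive. Suppose $A\ntrt{f}_C B$. By Corollary~\ref{cor:f=d} there is an algebraically closed $D$ with $C\seq D\seq\acl(BC)$ and $A\niind_D B$. The goal is to exhibit a single $\widehat{B}\supseteq BC$ such that $A'\ntrt{M}_C\widehat{B}$ for every $A'\equiv_{BC}A$. Split on the failure of $\iind$. In the first case, $A\ntrt{a}_D B$ is witnessed by $e\in\acl(AD)\cap\acl(BD)\setminus\acl(D)$: take $\widehat{B}=B$, and for each $A'\equiv_{BC}A$ transport the witness via $\sigma\in\Aut(\M/BC)$ with $\sigma(A)=A'$; the image $\sigma(D)$ still lies in $\acl(BC)$ and demonstrates $A'\ntrt{a}_{\sigma(D)}B$, so $A'\ntrt{M}_C B$. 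In the second case, $A\trt{a}_D B$ but the definition of $\iind$ furnishes an incidence $I(x,y)$ with $x\in\acl(AD)\setminus\acl(D)$ a point and $y\in\acl(BD)\setminus\acl(D)$ a line. Take $\widehat{B}=BC\cup\{x\}$; for each $A'\equiv_{BC}A$ with $\sigma\in\Aut(\M/BC)$ sending $A$ to $A'$, I construct a base $D^*\seq\acl(\widehat{B}C)$ from $\sigma(D)$, $x$, and auxiliary points of $\acl(BC)$ incident to $\sigma(y)$, so that the $I$-closure characterization of $\acl$ in Proposition~\ref{prop:Iacl} forces a common element of $\acl(A'D^*)$ and $\acl(\widehat{B}D^*)$ lying outside $\acl(D^*)$, yielding $A'\ntrt{M}_C\widehat{B}$.

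The main obstacle is this second case: the argument must succeed uniformly over every $A'\equiv_{BC}A$, and the automorphism $\sigma$ over $BC$ may carry $y$ to any of its finitely many $BC$-conjugates in $\acl(BC)$, so one cannot assume $\sigma(y)=y$ unless $y\in\dcl(BC)$. Careful bookkeeping is required: the choice of $D^*$ must account for the number of points of $\acl(\sigma(D))$ already incident to $\sigma(y)$ (at most $m-1$, since $\sigma(y)\notin\acl(\sigma(D))$), and the $I$-closure condition must be invoked to force the requisite line or point into $\acl(A'D^*)\cap\acl(\widehat{B}D^*)$ in each instance. Once $\trt{f}=\trt{\thorn}$ is established, the final assertion follows immediately.
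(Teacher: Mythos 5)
Your opening and closing reductions are mostly fine: $\trt{f}\Rightarrow\trt{\thorn}$ is standard, your Case 1 (failure of $\trt{a}$ over some intermediate $D$) correctly transports the witness by an automorphism over $BC$, and the final deduction of non-rosiness from non-simplicity is the paper's route too (though note you also need weak elimination of imaginaries, Theorem~\ref{thm:WEI}, to pass from symmetry of $\trt{\thorn}$ in $T^{\eq}$ down to the real sorts where your identity $\trt{f}=\trt{\thorn}$ lives). The genuine gap is Case 2, which is the heart of the proof. The paper argues the converse in the \emph{direct} direction: assuming $A\trt{\thorn}_C B$, it first uses extension for $\trt{\thorn}$ to reduce to $C\seq B=\acl(B)$, so that the witnessing line $v$ lies in $B$ and is literally fixed when passing to conjugates over $B$; it then adjoins \emph{new generic} points $\overline{w}$ on $v$ via a free completion, with Lemma~\ref{lem:extF} guaranteeing the crucial non-membership $v\notin\acl(C\overline{w})$, and a second application of extension produces $A'\equiv_B A$ with $A'\trt{\thorn}_C B\overline{w}$, from which base monotonicity yields the contradiction. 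In your contrapositive set-up you must instead exhibit one $\widehat{B}$ defeating every $A'\equiv_{BC}A$, and all auxiliary elements must be found inside $\acl(\widehat{B}C)=\acl(BCx)$; this is where the argument breaks down.

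Concretely, two things go wrong. First, since $x\in\acl(AD)\setminus\acl(BC)$, the element $x$ has infinitely many conjugates over $BC$, and for a typical $A'=\sigma(A)$ the transported witness is $I(\sigma(x),\sigma(y))$ with $\sigma(x)\notin\acl(BCx)$; so adding $x$ to $\widehat{B}$ does nothing for such $A'$, and you are forced into the "auxiliary points on $\sigma(y)$" mechanism. Second, that mechanism needs a base $D^*$ with $\sigma(D)\seq D^*\seq\acl(BCx)$ containing $m-1$ points incident to $\sigma(y)$ (so that $\sigma(x)$ forces $\sigma(y)\in\acl(A'D^*)$) while simultaneously $\sigma(y)\notin\acl(D^*)$. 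You never address this non-membership, and it is exactly the step the paper had to secure with a genericity tool (free completion plus Lemma~\ref{lem:extF}) applied to points chosen \emph{outside} $\acl(BC)$. Inside $\acl(BCx)$, which is a richly interconnected model of $T^c_{m,n}$, nothing guarantees that any admissible choice of points on $\sigma(y)$ keeps $\sigma(y)$ out of the $I$-closure of $D^*$; indeed for some configurations (e.g.\ when $x$ itself is incident to $\sigma(y)$, or when the closure of $\sigma(D)$ together with the chosen points sweeps up $\sigma(y)$) the candidate witness dies. Your own sentence that "careful bookkeeping is required" is precisely where the missing proof would have to go; as written, the existence of the required $D^*$ is asserted, not established, so the converse direction is not proved. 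I would recommend abandoning the contrapositive and following the direct argument: assume $A\trt{\thorn}_C B$, use extension and base monotonicity for $\trt{\thorn}$ together with Corollary~\ref{cor:f=d}, and let the free completion supply the generic auxiliary points.
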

\begin{proof}
The second claim follows from the first, since $T_{m,n}$ has weak elimination of imaginaries, and is not simple when $m,n\geq 2$.  To show the first claim, recall that $\trt{f}\Rightarrow\trt{\thorn}$ and $\trt{\thorn}$ satisfies base monotonicity (in any theory), so it is enough to show $\trt{\thorn}\Rightarrow\iind$ by Corollary \ref{cor:f=d}. So suppose $A\trt{\thorn}_C B$. Since $\trt{\thorn}$ satisfies extension, we may assume $C\seq B=\acl(B)$. Since $\trt{\thorn}\Rightarrow\trt{a}$, it suffices to fix $u\in\acl(AC)$ and $v\in B$, such that $u$ and $v$ are incident, and show $u\in\acl(C)$ or $v\in\acl(C)$. Without loss of generality, assume $I(u,v)$ (the other case is symmetric). Suppose $v\not\in\acl(C)$. Then $v$ is incident to $t\leq m-1$ points in $\acl(C)$.  Extend the structure $\acl(C)v$ by $m-t-1$ new points incident only to $v$, and embed the free completion into $\M$ over $\acl(C)v$. Let $\overline{w}=(w_1,\ldots,w_{m-t-1})$ be the image of the new points. Then $v$ is incident to $m-1$ distinct points in $\acl(C\overline{w})$, but $v\not\in\acl(C\overline{w})$ by Lemma \ref{lem:extF}.  Now, by extension for $\trt{\thorn}$, there is $A'\equiv_B A$ such that $A'\trt{\thorn}_C B\overline{w}$. Let $u'\in \acl(A'C)$ be such that $u'\equiv_B u$. Then $I(u',v)$ holds. If $u'\not\in\acl(C\overline{w})$, then there are $m$ distinct points in $C\overline{w}u'$ incident to $v$, so $v\in\acl(C\overline{w}u')$. It follows that $v\in\acl(C\overline{w})$, since base monotonicity for $\trt{\thorn}$ implies  $u'\trt{a}_{C\overline{w}}B\overline{w}$. This contradicts  $v\not\in\acl(C\overline{w})$, so we must have $u'\in\acl(C\overline{w})$, which implies $u'\in\acl(C)$ since $u'\trt{a}_C \overline{w}$. Since $u'\equiv_B u$, we have $u\in\acl(C)$, as desired.
\end{proof}

On the other hand, if $m,n\geq 2$, then M-independence fails extension in $T_{m,n}$. Indeed, since $n\geq 2$ we may fix $m$ points $\overline{a}=(a_1,\ldots,a_m)$ and a line $b$ such that $I(a_i,b)$ for all $1\leq i\leq m$. Since $m,n\geq 2$, we have $a_m\trt{M}_\emptyset b$. Let $C=\{a_1,\ldots,a_{m-1}\}$. For any $a'_m\equiv_b a_m$, we have $a'_m\ntrt{a}_C b$, and so $a'_m\ntrt{M}_\emptyset Cb$. It follows that M-independence does not coincide with thorn independence, unlike in the case of $T^k$.

\subsection{The stable cases}\label{subsec:stable}

In this final section, we settle the remaining details concerning $T_{m,1}$ and $T_{1,n}$. Since $T_{1,n}$ is interdefinable with $T_{n,1}$, we focus on  $T_{m,1}$ for $m\geq 1$. While $T_{1,1}$ and $T_{2,1}$ are classical examples, and $T_{3,1}$ appears in \cite{saffe} (see Remark \ref{rem:DOP} below), we could not find a proof of the following fact in the literature, and so we include it for the sake of completeness.

\begin{theorem}\label{thm:stable}
$T_{m,1}$ is $\omega$-stable of Morley rank $m$.
\end{theorem}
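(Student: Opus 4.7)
The plan is to leverage the explicit description of algebraic closure in Remark~\ref{rem:m=1} together with Corollary~\ref{cor:acltype}: the former gives $|\acl(Ca)\setminus\acl(C)|\leq m$ for any $C$ and single element $a$, while the latter says that $\tp(a/C)$ is determined by the isomorphism type of $\acl(Ca)$ over $C$. This reduces both $\omega$-stability and the rank computation to explicit combinatorics of small algebraic closures.

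For $\omega$-stability, fix a countable algebraically closed $C\subset\M$. By Remark~\ref{rem:m=1}: if $a\in P(\M)\setminus C$, then $\acl(Ca)=C\cup\{a\}$ with no new incidences (no line in $C$ can acquire a new point without violating the axiom of $T^c_{m,1}$ that every line has exactly $m-1$ points); if $a\in L(\M)\setminus C$, then $\acl(Ca)=C\cup\{a,q_1,\ldots,q_{m-1-k}\}$, where $q_1,\ldots,q_{m-1-k}$ are the points of $a$ outside $C$ and $k$ is the number of points of $a$ in $C$. So by Corollary~\ref{cor:acltype}, $\tp(a/C)$ is determined by either an element of $C$ (namely $a$), the ``generic'' point type, or a $\leq m-1$ element subset $S\subseteq C\cap P$ (the points of the line $a$ lying in $C$). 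Since $C$ is countable, so is the set of such data, giving $|S_1(C)|\leq\aleph_0$, hence $\omega$-stability.

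For Morley rank, let $\operatorname{MR}$ denote Morley rank; the goal is $\operatorname{MR}(x=x)=m$. Since $\operatorname{MR}(P(x))=1$ (the only nonalgebraic $1$-type of a point is the generic one), it suffices to show $\operatorname{MR}(L(x))=m$. Let $\psi_k(x;\overline{b}):=L(x)\land\bigwedge_{i=1}^{k}I(x,b_i)$ for distinct points $b_1,\ldots,b_k$ with $0\leq k\leq m-1$; I would show $\operatorname{MR}(\psi_k)=m-k$ by downward induction on $k$. Base case $k=m-1$: the set of lines through a fixed $(m-1)$-tuple is infinite (by the safe-diagram axioms) but admits no infinite proper definable subset (any further incidence would force $m$ points on a single line), so rank $1$. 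Inductive step: pick distinct points $c_0,c_1,\ldots\in P(\M)\setminus\{b_1,\ldots,b_k\}$ and set $\chi_i(x):=\psi_k(x;\overline{b})\land I(x,c_i)\land\bigwedge_{j<i}\lnot I(x,c_j)$; these are pairwise disjoint, and each has rank $m-k-1$ (obtained from $\psi_{k+1}(x;\overline{b},c_i)$ by removing finitely many rank-$(m-k-2)$ pieces $\psi_{k+2}(x;\overline{b},c_i,c_j)$), giving $\operatorname{MR}(\psi_k)\geq m-k$. For the matching upper bound, any complete type $p$ extending $\psi_k$ specifies a subset $S$ of additional parameter points on $x$ (with $|S|\leq m-1-k$) plus $m-1-k-|S|$ generic new points on $x$; by additivity of Morley rank for stationary types in the $\omega$-stable setting, $\operatorname{MR}(p)=(m-1-k-|S|)+1\leq m-k$. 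Taking $k=0$ yields $\operatorname{MR}(L(x))=m$.

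The main technical obstacle is justifying the upper bound on $\operatorname{MR}(\psi_k)$: one must verify that no ``exotic'' splittings exist beyond those coming from incidences and equalities. This follows from the almost quantifier elimination of Proposition~\ref{prop:partialqe} together with the small-closure description of Remark~\ref{rem:m=1}: every basic existential formula satisfied by a generic line over $C$ is determined by the line's incidence pattern with $C$ and the identifications among the at most $m$ new elements in its algebraic closure, so no further splittings are possible beyond those captured in the inductive step.
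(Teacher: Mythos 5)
Your route is genuinely different from the paper's: the paper never computes Morley rank of formulas directly, but instead shows $\ter{\otimes}=\iind=\trt{a}=\trt{f}$, deduces stability from stationarity, superstability from local character, then invokes $\aleph_0$-categoricity and Cherlin--Harrington--Lachlan to get finite Morley rank \emph{equal to $U$-rank}, and finally computes $U$-rank $=m$ by a forking chain. Your type-count over a countable algebraically closed set for $\omega$-stability is correct (and arguably more self-contained than the paper's), and your lower bound $\operatorname{MR}(\psi_k)\geq m-k$ via the pairwise disjoint sets $\chi_i$ is complete and correct (modulo writing $I(x,b_i)$ with the arguments in the wrong order).

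The genuine gap is the upper bound. The step ``by additivity of Morley rank for stationary types in the $\omega$-stable setting, $\operatorname{MR}(p)=(m-1-k-|S|)+1$'' rests on a principle that is false in general: Morley rank does not satisfy the Lascar-style additivity inequalities in $\omega$-stable theories (only $U$-rank does; for $\operatorname{MR}$ one only has much weaker bounds, e.g.\ Erimbetov-type, and fibration additivity can fail). What you are implicitly computing in that line is the $U$-rank of $p$, and identifying it with $\operatorname{MR}(p)$ is exactly the point the paper handles by citing CHL. The final paragraph gestures at the right repair but does not carry it out. The repair, within your own induction, is: work over an $\aleph_0$-saturated model $N\supseteq\overline{b}$; by Corollary~\ref{cor:acltype} and Remark~\ref{rem:m=1}, every complete type over $N$ extending $\psi_k$ is either algebraic, or contains $\psi_{k+1}(x;\overline{b},c)$ for some point $c\in N\setminus\overline{b}$ (hence has rank $\leq m-k-1$ by the downward induction hypothesis), or is the \emph{unique} ``generic'' type of a line through exactly $b_1,\dots,b_k$ and no other point of $N$. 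Consequently $\psi_k$ cannot contain two disjoint $N$-definable subsets both of rank $\geq m-k$ (both would have to lie in the unique generic type), so $\operatorname{MR}(\psi_k)\leq m-k$ (indeed with Morley degree $1$). With that substitution for the additivity claim, your argument goes through.
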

\begin{proof}
 In $\M\models T_{m,1}$, algebraic closure is disintegrated and, if an incidence $I(a,b)$ holds, then $a\in\acl(b)$. Combined with Corollary \ref{cor:acltype}, it is possible to show $\omega$-stability by counting types; but here is another argument. The observations about algebraic closure imply $\ter{\otimes}=\iind=\ter{a}$, and so $\ter{\otimes}=\trt{f}=\trt{a}$ by Proposition \ref{prop:stationary}(b). In particular, nonforking is stationary by Proposition~\ref{prop:stationary}(a), which implies $T_{m,1}$ is stable. Since algebraic closure is locally finite and modular, it follows that $\trt{a}$ satisfies finitary local character, and so $T_{m,1}$ is superstable. Since $T_{m,1}$ is $\aleph_0$-categorical (see Remark \ref{rem:m=1}), we conclude that it is $\omega$-stable of finite Morley rank, which coincides with its $U$-rank (see \cite{CHL}).  It remains to show that this rank is precisely $m$. 

To see $U(T_{m,1})\geq m$, fix $a\in L(\M)$ and pairwise distinct $b_1,\ldots,b_{m-1}\in P(\M)$ such that $I(b_i,a)$ for all $1\leq i\leq m-1$. For $0\leq i\leq m-1$ set $B_i=\{b_j:j\leq i\}$ (so $B_0=\emptyset$), and set $B_m=B_{m-1}\cup\{a\}$. Then $a\ntrt{a}_{B_{i-1}}B_i$ for all $1\leq i\leq m$, and so $U(a/\emptyset)\geq m$. Conversely, fix $k\geq 1$ and assume $U(T_{m,1})\geq k$. We show $k\leq m$. Since algebraic closure is disintegrated, the assumption implies that there are $a\in\M$ and $\emptyset=B_0\seq B_1\seq\ldots\seq B_k\subset\M$ such that, for all $1\leq i\leq k$, there is $b_i\in(\acl(a)\cap\acl(B_i))\backslash \acl(B_{i-1})$.  Note that $a\neq b_i$ for all $1\leq i\leq k-1$ since, otherwise, we would have $a\in\acl(B_i)$ and thus $b_{i+1}\in\acl(a)\seq\acl(B_i)$, which is a contradiction. Moreover, $b_1,\ldots,b_{k-1}$ are pairwise distinct since $b_i\in\acl(B_i)\backslash\acl(B_{i-1})$ for all $1\leq i\leq k-1$. For each $1\leq i\leq k-1$, since $b_i\in\acl(a)$ and $a\neq b_i$, it follows that $b_i\in P(\M)$, $a\in L(\M)$, and $I(b_i,a)$. Altogether, $a$ is a line incident to $k-1$ distinct points, which means $k\leq m$.
\end{proof}

\begin{remark}\label{rem:DOP}
The theory $T_{1,1}$ is interdefinable with the theory of two unary predicates which partition the universe into two infinite pieces, and so $I(T_{1,1},\aleph_\alpha)=2|\alpha|+1$. The theory $T_{2,1}$ has quantifier elimination after adding a symbol for the \emph{equivalence relation} $E(x,y)$ on $L$ defined by $\exists z(I(z,x)\wedge I(z,y))$. In a model of $T_{2,1}$, each point determines a distinct $E$-class of lines, and so models of $T_{2,1}$ are bi-interpretable with models of the theory of an equivalence relation with infinitely many infinite classes. An easy counting exercise then shows $I(T_{2,1},\aleph_\alpha)=2^{|\alpha|}+\aleph_0$ when $\alpha>0$. In contrast, for $m\geq 3$, $T_{m,1}$ has the maximum number of models in all uncountable cardinalities. To see this, fix $\kappa>\aleph_0$ and an $(m-1)$-uniform hypergraph $G$ on $\kappa$. Then one can construct a model $M_G\models T_{m,1}$ such that $P(M_G)=\kappa$ and, for any $(m-1)$-element subset $X$ of $\kappa$, the set of lines in $M_G$ incident to all points in $X$ has size $\kappa$ if $X$ is an edge in $G$, and $\aleph_0$ otherwise. So the $2^\kappa$ pairwise non-isomorphic choices for $G$ produce $2^\kappa$ pairwise non-isomorphic models of $T_{m,1}$ of size $\kappa$. Coding graphs in models is an essential component of the proof that a superstable theory with the dimensional order property (DOP) has the maximum number of models in uncountable cardinalities. We leave it as an exercise to show that if $m\geq 3$, then $T_{m,1}$ has DOP. In fact, $T_{3,1}$ is interdefinable with the example given in \cite[Section 2.1]{saffe}.
\end{remark}

\end{document}